\crefname{figure}{Figure}{Figures}
\newtheorem{corollary}{Corollary}[section]
\newtheorem{theorem}{Theorem}
\newtheorem*{theoremnn}{Theorem}
\newtheorem{remark}[corollary]{Remark}
\newtheorem{lemma}[corollary]{Lemma}
\newtheorem{proposition}[corollary]{Proposition}
\newtheorem{definition}[corollary]{Definition}
\newtheorem{example}[corollary]{Example}
\newcommand{\Ffunc}          	{\mathsf{F}} 
\newcommand{\Gfunc}          	{\mathsf{G}} 
\newcommand{\fnc}          	    {\mathsf{Fnc}} 
\newcommand{\mpfil}          	    {\mathsf{MP\textnormal{-}Fil}} 
\newcommand{\Zfunc}          	{\mathsf{Z}} 
\newcommand{\Bfunc}          	{\mathsf{B}} 
\newcommand{\ZB}          	    {\mathsf{ZB}} 
\newcommand{\gr}          	    {\mathsf{Gr}} 
\newcommand{\Int}          	    {\mathsf{Int}} 
\newcommand{\Seg}          	    {\mathsf{Seg}} 
\newcommand{\diag}          	{\mathsf{diag}} 
\newcommand{\mobeq}          	{\simeq_{\mathsf{M\ddot{o}b}}} 
\newcommand{\mon}          	{\mathsf{Mon}} 
\newcommand{\subcx}          	{\mathsf{SubCx}} 
\newcommand{\goi}          	{\mathsf{OI}} 
\newcommand{\Ffrak}          	{\mathfrak{F}} 
\newcommand{\Gfrak}          	{\mathfrak{G}} 
\newcommand{\Mfrak}          	{\mathfrak{M}} 
\newcommand{\Nfrak}          	{\mathfrak{N}} 
\newcommand{\dtfnc}          	    {\mathsf{DT\textnormal{-}Fnc}} 
\newcommand{\lowersum}          	    {\mathsf{LowerSum}}  
\newcommand{\lp}                     {\mathbb{L}}
\newcommand{\dgr}                     {\rho}
\newcommand{\monfnc}                    {\mathsf{MonFnc}}
\newcommand\ladj[1]{#1_{\diamond}} 
\newcommand\radj[1]{#1^{\diamond}} 
\newcommand\mmi[2]{\partial_{#1}^\mathsf{Mon}\left(#2\right)} 
\newcommand{\R}{\mathbb{R}}
\newcommand{\N}{\mathbb{N}}
\newcommand{\Z}{\mathbb{Z}}
\renewcommand{\Vec}		{{{\mathsf{Vec}}}}
\newcommand{\incord}{\leq_{\mathsmaller{\supseteq}}}
\newcommand{\prodord}{\leq_{{\times}}}
\DeclareMathOperator{\proj}{proj}
\DeclareMathOperator{\Ima}{im}
\DeclareMathOperator{\spn}{span}
\DeclareMathOperator{\rank}{rank}
\newcommand{\dis}{\mathrm{dis}}
\title{Grassmannian Persistence Diagrams}
\author[1]{Aziz Burak G\"ulen\footnote{\href{mailto:aziz.burak.guelen@duke.edu}{aziz.burak.guelen@duke.edu}, 120 Science Dr, Durham, NC 27708}}
\author[2]{Facundo M\'emoli\footnote{\href{mailto:facundo.memoli@gmail.com}{facundo.memoli@gmail.com}, 110 Frelinghuysen Road, Piscataway, NJ 08854}}
\author[3]{Zhengchao Wan\footnote{\href{mailto:zwan@missouri.edu}{zwan@missouri.edu}, 810 Rollins St, Columbia, MO 65201}}
\affil[1]{Department of Mathematics, Duke University}
\affil[2]{Department of Mathematics, Rutgers University}
\affil[3]{Department of Mathematics, University of Missouri}
\date{}
\begin{document}

\maketitle

\begin{abstract}
    We introduce Orthogonal M\"obius Inversion, a concept analogous to M\"obius inversion on finite posets, which is applicable to order-preserving functions from a finite poset to the Grassmannian $\gr(V)$ of an inner product space $V$. This notion critically relies on the inner product structure on $V$ enabling it to capture much finer information than standard integer-valued persistence diagrams.
            
    Orthogonal Inversion is a special case of the broader concept of Orthomodular Inversion, where the target space is any orthomodular lattice, which we also identify.
            
    We apply Orthogonal Inversion in order to construct a "non-negative" persistence diagram for any given multiparameter filtration $\Ffunc$ of a finite simplicial complex $K$, indexed over an arbitrary finite poset $P$. This is done by applying it to the birth-death spaces of $\Ffunc$. Analogously to  $1$-parameter classical persistence diagrams, these multiparameter Grassmannian persistence diagrams offer straightforward interpretability. Specifically, to a segment $(b, d) \in \Seg(P)$, (1) the Grassmannian persistence diagram canonically assigns a vector subspace of $C_\dgr^K$ consisting of cycles that are born at $b$ and become boundaries at $d$ and (2) this assignment is exhaustive at the homology level. 

Finally, we relate our Grassmannian persistence diagrams to the recently introduced notion of Möbius homology, thus enhancing its interpretability through the lens of our framework.

\paragraph{Keywords.} M\"obius inversion, multiparameter persistence, persistence diagrams, topological data analysis.

\paragraph{Mathematics Subject Classification.} 55U15, 55N31, 06A15, 05E45.

\end{abstract}

\newpage
\printnomenclature[2cm]
\addcontentsline{toc}{section}{Nomenclature}

\tableofcontents

\section{Introduction}

In Topological Data Analysis (TDA) \cite{carlsson2009,edelsbrunner2010computational}, \emph{persistent homology} 
 is a tool for tracking and characterizing the evolution (i.e. the ``birth" and ``death") of homological features throughout a filtration. In the case of a 1-parameter filtration, this is achieved through an algebraic decomposition of the persistence module which is obtained by applying the homology functor to the given filtration. Every 1-parameter persistence module $M:\lp\to \Vec$, where $\lp := \{1<\cdots<n \}$ is any finite linear poset,  admits an interval decomposition \cite{zomorodian2005,crawley2015decomposition}: that is, $M$ can be written as a direct sum of \emph{indecomposables} which turn out to be \emph{interval} modules. Each interval in the decomposition of the module corresponds to the \emph{lifespan} of a topological feature in the filtration (i.e. the maximal region throughout which a topological feature is ``alive"). Interestingly, the multiplicities of intervals in this decomposition can be obtained through a notion of inclusion-exclusion, i.e. through \emph{M\"obius inversion}. More precisely, the function that assigns to each interval $(i,j)$  its multiplicity in the decomposition coincides with the \emph{M\"obius inverse} of the rank function $rk_M(i,j) := rk(M(i) \to M(j))$; see  \cite{landi1997new,cohen-steiner2007,Patel2018}.

In the case of multiparameter filtrations, i.e. filtrations defined on an arbitrary (say,  finite) poset $P$,  the  algebraic approach presents us with certain challenges. First off, the generalization of the notion of interval is substantially richer than its precursor in the 1-parameter case and,  more importantly, the indecomposables are no longer only interval modules. Furthermore, the space of indecomposables can be enormously complex \cite{carlsson2009,bauer2020cotorsion,kim2023persistence}.

Regarding the first point, given a poset $P$, and $b\leq d$ in $P$, the \emph{segment} $(b,d)$ will be the set of all $p\in P$ such that $b\leq p\leq d$. By $\Seg(P)$ we will denote the collection of all segments of $P$. The \emph{intervals}, $\Int(P)$, in $P$ are more general than segments;\footnote{When $P=\lp$, one has $\Int(P) = \Seg(P)$ but in general $\Int(P)$ is much larger; see \cite[Theorem 31]{asashiba2023approximation-2d}.}  we will not require their precise definition in this paper (see for example \cite[Definition 2.6]{Kim2021}).

Nevertheless, analogously to the 1-parameter setting, a notion of  persistence diagram can still be defined for a module $M : P \to \Vec$  over a poset $P$ (which could be a multidimensional grid) by applying M\"obius inversion to the rank function $rk_M (b,d) := rk(M(b) \to M(d))$, for $(b,d)\in\Seg(P)$. Since the resulting persistence diagram can no longer be guaranteed to always be non-negative, it is usually referred to as a \emph{signed} persistence diagram \cite{betthauser2019graded,Kim2021,botnan2022signed,edit,morozov-patel,gal-conn,kim2023persistence,metarank-clause}; see also \cite{asashiba2023approximation-2d,asashiba2023approximation}.

\begin{figure}
    \centering
    \tikzstyle{l} = [draw, -latex',thick]
\begin{tikzpicture}[auto,
 block/.style ={rectangle, thick, fill=blue!10, text width=5.0em,align=center, rounded corners},
 longblock/.style ={rectangle, thick, fill=green!10, text width=7em,align=center, rounded corners},
 boundblock/.style ={rectangle, thick, fill=red!10, text width=5.5em,align=center, rounded corners},
 longboundblock/.style ={rectangle, thick, fill=red!10, text width=7em,align=center, rounded corners},
 line/.style ={draw, -latex', shorten >=2pt},
  ]
    \node [block] (fil) {\scriptsize Simplicial Filtration};
    \node [right =6.5em of fil] (a) {};
    \node [longblock, below =1em of a] (chain) {\scriptsize Filtered Chain Complex (FCC)};
    \node [block, right =6em of a] (pmod) {\scriptsize Persistence Modules};
    \node [block, right =6em of pmod] (pdgm) {\scriptsize Persistence Diagrams};
    \path [line, dashed, gray] (fil) |- node [below] {\scriptsize \; \;\; \textcolor{black}{Chain Complex}} (chain);
    \path [line, black] (fil) -- node [above] {\scriptsize \; \;\; Homology} (pmod);
    \path [line, dashed, gray] (chain) -- node [text width=8em, above] {} (pmod);
    \path [line, black] (pmod) -- node [text width=8em, above] {\scriptsize \; \;\;\;\;M\"obius Inversion} (pdgm);
    \path [l, dashed] (chain) -| node [below] {} (pdgm);

\end{tikzpicture}
    \caption{The intermediate step of FCCs in the persistent homology pipeline.}
    \label{fig: pipeline with fccs}
\end{figure}
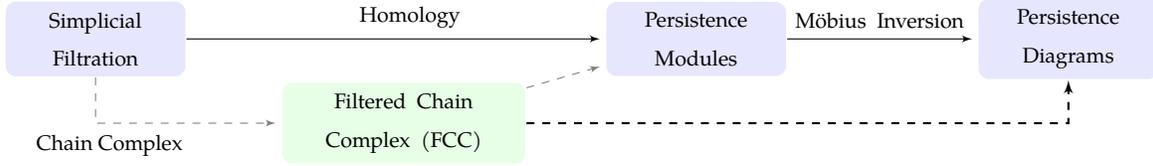

Recently, going beyond strict persistent \emph{homology}~\cite{zhou-thesis,thesis-guelen}, invariants arising from \emph{filtered chain complexes} (see \Cref{fig: pipeline with fccs}) such as \emph{birth-death functions} \cite{saecular, edit, morozov-patel,fasy2022persistent}, the \emph{persistent Laplacian} \cite{lieutier-pdf, persSpecGraph, persLap}, \emph{verbose barcodes}  \cite{usher, ephemral,fasy2019faithful,     landi2021invariants,chacholski2023decomposing}, as well as \emph{persistent cup-product} related functions \cite{contessoto_et_al,cup-persistent} at the the level of cohomology, and \emph{persistent homotopy groups}~\cite{memoli-zhou-pi1},
have been studied to track and characterize the evolution of topological features throughout a filtration.

In particular, it has been shown by McCleary and Patel \cite{edit} that, for a given  filtration $\Ffunc$ with domain a finite lattice $P$ and  for an integer $\dgr\geq 0$, a notion of persistence diagram can also arise as the M\"obius inverse of the \emph{birth-death function} $\dim\left(\ZB_\dgr^\Ffunc\right):\Seg(P)\to \mathbb{Z}$, where for each $(b,d)\in \Seg(P)$ the value $\dim\left(\ZB_\dgr^\Ffunc((b,d))\right)$ counts the number of $\dgr$-cycles that are born at or before $b$ and die (i.e. become boundaries) at or before $d$ along the filtration $\Ffunc$; see \cref{parag: poset of int}. This point of view was furthered by Morozov and Patel in \cite{morozov-patel}, where they consider the case of 2-parameter filtrations (and develop efficient algorithms for the computation of the resulting persistence diagrams), and by G\"ulen and McCleary in~\cite{gal-conn}, where they delve into  theoretical aspects.

The  persistence diagram induced by the birth-death function is stronger than the rank induced one. Indeed, it has recently  been shown by G\"ulen and McCleary that (on any finite poset) the (potentially signed) persistence diagram obtained from the rank function can be reconstructed from the signed persistence diagrams obtained from the birth-death function; see  \cite[Proposition 8.3]{gal-conn}. Furthermore, when derived from a filtration, birth-death function induced diagrams carry additional information compared to rank function induced diagrams in the form of \emph{ephemeral cycles}; see \cite{edit, gal-conn}.\footnote{The terminology is not used in those papers though. It first appeared in \cite{usher}.}

\begin{figure}
    \centering

\includegraphics[width=0.1\linewidth]{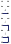}
\includegraphics[width=0.55\linewidth]{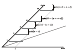}

    \caption{Grassmannian persistence diagram of the 1-parameter filtration depicted on the left. Grassmanian persistence diagrams  retain information about cycle spaces associated to different intervals. For example, for the interval $(1,2)$ the Grassmanian persistence diagram not only captures the multiplicity of that interval as the dimension of the space $\mathrm{span}\{a-c\}$ but also provides cycles that are precisely born at $1$ and die at $2$.}
    \label{fig:3d-gpd}
\end{figure}
\paragraph{The interpretability of signed persistence diagrams.} As mentioned above, whereas in the 1-parameter setting it is  straightforward to interpret persistence diagrams (arising either from rank or birth-death functions), in the multiparameter setting signed persistence diagrams are only partially interpretable:

\begin{enumerate}
\item When a multiparameter persistence module is segment/interval decomposable, its rank induced persistence diagram is non-negative and it captures the multiplicity  of each segment/interval appearing in is decomposition \cite[Theorem 3.14]{Kim2021}. However, in the topology induced by the interleaving distance on $\R^n$ modules (for $n\geq 2$), the collection of all almost-indecomposable modules is open and dense, i.e. the defining property is generic; see  \cite[Corollary 1]{bauer2022generic}. That is, the setting of decomposable multiparameter modules is indeed relatively restrictive which makes results such as \cite[Theorem 3.14]{Kim2021} rarely applicable in practise.

\item  Sometimes negative values are  attained by  rank induced persistence diagrams (see e.g. \cite[Example 18]{kim2023persistence}) and, by the previous item, these signify that the underlying persistence module is not segment/interval decomposable. 

\item  Even if a rank induced persistence diagram is non-negative, the underlying persistence module cannot be guaranteed to be segment/interval decomposable. Indeed, segments/intervals that are assigned positive values by the rank persistence diagram are not necessarily linked to a direct summand of the module; see  \cite[Example 3.21]{Kim2021}.

\item In the case of birth-death induced or rank induced signed persistence diagrams, even when non-negative, the integer number assigned to a segment  $(b,d)\in \Seg(P)$ does not necessarily correspond to the number of cycles that are born at $b$ and die at $d$; see~\cref{fig: filtrations for nonfully capturing} and \cref{ex: signed pd dont fully capture}. 
\end{enumerate}

In summary, what the Möbius inversion of the rank or birth-death function achieves in the $1$-parameter setting does not fully carry over to the multiparameter setting. The interpretation/meaning of the numbers determined by such persistence diagrams is unclear. Specifically, there is a lack of connection between the persistent diagrams obtained as M\"obius inverses of the rank and birth-death functions and the evolution of cycles throughout a filtration in the multiparameter context. Our work addresses these shortcomings by identifying a novel, and richer, variant of the notion of persistence diagram which admits direct interpretation; see \cref{fig:3d-gpd}.

\subsection{Main Contributions}

We introduce a novel variant of M\"obius inversion, termed \emph{Orthogonal Inversion}, which operates on functions $$\Ffrak:R\to \gr(V)$$ defined on any finite poset $R$ and valued in the Grasmannian of a finite dimensional inner product space $V$; see \cref{defn: goi}. When $\Ffrak$ is order preserving, its orthogonal inverse $\goi(\Ffrak):R\to \gr(V)$ behaves like a derivative in the sense that 
\[
    \sum_{r' \leq r} \goi (\Ffrak)(r') = \Ffrak(r)\,\,\text{for every $r\in R$;}
    \]
     see \Cref{prop: goi is monoidal mi}. When orthogonal inversion is applied to a $\gr(V)$-valued function $\Ffrak$ defined on the poset of segments (with the product order) of a finite poset $P$, that is when $R=\Seg(P)$, we obtain the \emph{Grassmannian persistence diagram} $$\goi(\Ffrak):\Seg(P)\to \gr(V)$$ of $\Ffrak$. \textbf{This notion critically relies on the inner product structure on $V$ enabling it to capture much finer information than integer-valued persistence diagrams.}\footnote{In our constructions, this inner product propagates from the simplicial structure onwards to the level of chain complexes.}

\begin{figure}[t]
    \centering
    \includegraphics[scale=13]{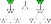}
    \caption{Hasse diagram of a poset $P$ with 4 elements (shown in the middle). Two different filtrations $\Ffunc$ (on the left) and $\Gfunc$ (on the right) indexed by the poset $P$. Note that the simplicial complexes $\Ffunc(p_1)=\Ffunc(p_2)$ have exactly one 1-cycle whereas  $\Gfunc(p_1)\neq \Gfunc(p_2)$ and each supports exactly one 1-cycle.  These two filtrations have identical degree-1 signed persistence diagrams (for both birth-death and rank functions) and, furthermore, both are non-negative. Indeed, both only produce the (multiplicity) values $0$ or $1$ and, hence, are unable to detect the $2$  different homology classes that are born at $p_3$ and die at $p_4$ in the filtration $\Ffunc$. In contrast, their Grassmannian persistence diagrams in degree-1 are different and, for each segment, they capture the precise number of degree-1 homology classes supported on exactly that segment. Neither the birth-death nor the rank function induced signed persistence diagrams achieve this in spite of their non-negativity. Beyond this example, as a general feature of Grassmanian persistence diagrams is that they are always guaranteed to capture all (persistent) homology classes (\cref{thm: completeness}). See \cref{ex: signed pd dont fully capture} for details. }
    \label{fig: filtrations for nonfully capturing}
\end{figure}

\paragraph{Interpretability.} Our method, when applied to the birth-death space valued function $\ZB_\dgr^\Ffunc$ of a filtration $\Ffunc$ indexed by a finite poset $P$, is able to completely capture the evolution of topological features along this filtration without imposing any limitation on the domain poset $P$:

\begin{itemize}
\item For any segment $I = (b,d)$ of the poset $P$, the vector space $\goi (\ZB_\dgr^\Ffunc)(I)$ consists of \emph{cycles} that are born precisely at b and die precisely at d; see \cref{prop: born and dead multiparameter}.   Furthermore, such a vector space is canonically associated to every segment $I$; see~\cref{prop: canonicality}. 

\item As a refinement of the previous item, this assignment is exhaustive at the level of homology.  Indeed, for any segment $I=(b,d)$, \emph{every homological feature born at $b$ and dying at $d$ is captured by $\goi \left(\ZB_\dgr^\Ffunc\right)(I)$}; see \cref{thm: completeness}. In particular, we have that  $\dim \left( \goi \left(\ZB_\dgr^\Ffunc\right) ((b,d)) \right)$ is always at least the number of independent homological features that are born at $b$ and die at $d$. As we mentioned above, this exhaustiveness property is not enjoyed by currently  available notions of persistence diagrams over general posets; see \cref{fig: filtrations for nonfully capturing} and~\cref{ex: signed pd dont fully capture}.

\end{itemize}

\paragraph{Discriminative power.} In addition to admitting a more direct interpretation than both standard birth-death and rank-induced persistence diagrams, the Grassmannian persistence diagram $\goi\left(\ZB_\dgr^\Ffunc\right)$ is also stronger than these notions. Indeed, the birth-death induced persistence diagram can be constructed from $\goi\left(\ZB_\dgr^\Ffunc\right)$ (see \cref{prop: goi to signed pd}), whereas it is also already known that the rank induced persistence diagram can be recovered from the birth-death induced one; see \cite[Proposition 8.3]{gal-conn}. Furthermore, there are cases in which (both birth-death and rank induced) signed persistence diagrams confuse filtrations that are distinguished by Grassmannian persistence diagrams; one such example is described in \cref{fig: filtrations for nonfully capturing} and elaborated on in \cref{ex: signed pd dont fully capture}. Other manifestations of the added strength are discussed below.

\paragraph{Functoriality and Stability.} A desirable property of any reasonable notion of persistence diagram is its stability. By building upon \cite{edit}, where the authors construct a functorial pipeline for persistence,\footnote{Their functoriality  critically rely on the use of the product order on segments as opposed to the more commonly used inclusion order.} 
we show that our Grassmannian persistence diagrams are stable under a suitable notion of edit distance. Our stability result is, in fact, a byproduct of the functorial nature of our construction; see \cref{thm: stability of mp gpd}.

\paragraph{Enhancing interpretability of persistent M\"obius Homology.} 
Unlike the conventional approach of associating a \emph{single} persistence diagram to a given persistence module, Patel and Skraba’s recent work~\cite{patel-skraba-mobius} introduces a novel perspective: a framework that assigns to a persistence module an entire \emph{family of persistence diagrams}, indexed by a nonnegative integer, which we refer to as the \emph{stratum} and denote by $s$.\footnote{For a given integer  $s\geq 0$, we will call the corresponding element in this family  the stratum-$s$ persistent M\"obius homology of the given module.}

A key insight from \cite{patel-skraba-mobius} is that the Euler characteristic (i.e., alternating sum) of this family of persistence diagrams precisely recovers the birth-death induced signed persistence diagram~\cite[Theorem 3.13]{patel-skraba-mobius}. Since the birth-death induced signed persistence diagram itself is obtained through Möbius inversion, this result can be interpreted as Möbius homology providing a \emph{categorification} of Möbius inversion—hence the name Möbius homology. Moreover, this perspective can also be seen as unraveling the intricate structure of the birth-death induced signed persistence diagram of a module by breaking it into simpler, more manageable pieces (the different strata).  Furthermore, similarly to what we do in this paper in terms of Grassmanian persistence diagrams, Patel and Skraba demonstrate that this family consisting of different strata is strictly more informative than the birth-death induced signed persistence diagram for a given persistence module~\cite[Example 5.10]{patel-skraba-mobius}. 

The M\"obius Homology perspective is not only novel and intriguing but also represents a conceptual breakthrough in our understanding of persistence modules. Despite this richness, the precise meaning and implications of these stratified diagrams remain largely unexplored. In this work, we contribute to a deeper understanding of these diagrams by leveraging the interpretability of Grassmannian persistence diagrams established here and uncovering a fundamental connection to Patel and Skraba’s work—one that may suggest a future higher-order construction of Grassmannian persistence diagrams. 

In particular, we clarify the interpretation of the $0$-th stratum of persistent Möbius homology of a module by leveraging the interpretability of Grassmannian persistence diagrams established in this paper. Specifically, when a persistence module arises from a filtration via the homology functor, we show in \cref{prop: grade 0 pers mob hom isomorphic to gpd} that, for the $0$-th stratum, Patel and Skraba's construction assigns to each segment $(b, d)$ a vector space whose dimension coincides with that of the vector space specified by the Grassmannian persistence diagram of the filtration at $(b, d)$. As a consequence, and in light of \cref{prop: born and dead multiparameter}, we are able to prove that the stratum-$0$ persistent Möbius homology also encodes the number of independent cycles born at $b$ and dying at $d$; see \cref{coro:MH-ex}.

\subsection{Organization of the Paper}

In~\cref{sec: monoidal mobious inversion}, we introduce the notion of \emph{Monoidal M\"obius inverse} (\cref{defn: monoidal mobius inversion}). The common theme across the applications of M\"obius inversion in TDA \cite{Patel2018} is that the functions to which M\"obius inversion is applied are valued in an abelian group. This abelian group often arises as the group completion of the monoid consisting of the isomorphism classes of objects in an abelian category $\mathcal{A}$. The group completion process is employed in order to make sense of the ``minus signs'' in the defining formula for the M\"obius inversion. The concept of monoidal M\"obius inverse  (see~\cref{defn: monoidal mobius inversion}) directly applies  to functions valued in a monoid (i.e. it does not rely on its  group completion). This notion permits identifying a non-trivial notion of M\"obius inversion in situations when the group completion of the target monoid is trivial, which is the case for $\gr(V)$ as demonstrated in~\cref{appendix:details}. Furthermore, we show that monoidal M\"obius inverses satisfy a monoidal version of Rota's Galois Connection Theorem; see~\cref{thm: monoidal rgct}. As a result, the process of assigning a monoidal M\"obius inverse to a monotone function through Orthogonal Inversion inherits the functoriality of classical M\"obius inversion~\cite{edit, gal-conn}. 

In~\cref{sec:generalized orthogonal inversion}, we introduce the concepts of Orthogonal Inversion and Orthomodular Inversion. Orthogonal Inversion applies to order-preserving functions defined from a finite poset to a Grassmannian. When applied to such a function, Orthogonal Inversion produces a monoidal Möbius inverse of the function, as shown in~\cref{prop: goi is monoidal mi}. This provides us with a functorial and stable way (see~\cref{thm: goi functor and stable,thm: isometry of goi}) to obtain a notion of persistence diagram, which we refer to as Grassmannian persistence diagrams.

In~\cref{subsec: gpd for multiparameter}, we introduce the notion of the Grassmannian persistence diagram as the Orthogonal Inverse of the birth-death spaces (\cref{defn: gpd of multiparameter filtrations}). In~\cref{subsec: inter of gpd canonical cyyles and exhaustiveness}, we show that the Grassmannian persistence diagram of a filtration $\Ffunc : P \to \subcx(K)$ canonically assigns a cycle space to every segment $(b,d) \in \Seg(P)$, consisting of cycles that are born at $b$ and die at $d$ (see~\cref{prop: born and dead multiparameter,prop: canonicality}). Moreover, this assignment is exhaustive at the homology level, as established in~\cref{thm: completeness}. In~\cref{subsec: stability of grassmannian pd}, we prove the stability of Grassmannian persistence diagrams in~\cref{thm: stability of mp gpd}. Finally, in~\cref{subsec: compare gpd to signed}, we compare the strength and discriminative power of Grassmannian persistence diagrams to that of birth-death induced signed persistence diagrams and provide a lower bound for the edit distance between Grassmannian persistence diagrams of two filtrations by the edit distance between their birth-death induced signed persistence diagrams (\cref{thm: edit between signed is lower bound for edit between grassmannian}).

In~\cref{sec: pers mob hom}, we consider a notion of persistent M\"obius homology  specifically directly applicable to filtrations and establish \cref{prop: grade 0 pers mob hom isomorphic to gpd}, which provides precise   relationship between the Grassmannian persistence diagram and the 0-th stratum of persistent M\"obius homology of a filtration.   In~\cref{subsec: pers mob hom of modules}, we prove that the our filtration-based version of persistent M\"obius homology is isomorphic to the module-based persistent M\"obius homology of \cite{patel-skraba-mobius} when their input module is the one obtained by applying the homology functor to the given filtration; see~\cref{thm: mob hom ker is mob hom fil}.

Finally, in \cref{sec:disc} we collect a few questions that might motivate further research.

\subsection{Related Work}
Our work is inspired by and builds upon several recently explored directions. First, our functoriality and stability results extend those established in~\cite{edit}. Second, the idea of equipping chain spaces with an inner product to obtain vector spaces corresponding to intervals in the persistence diagram has been previously explored in~\cite{harmonicph}, and we further develop this approach.\footnote{See also~\cite{ogle-sultan-inner} for inner product structures considered at the level of modules.}  Additionally, our homological exhaustiveness result (\cref{thm: completeness}) generalizes and leverages a key construction from~\cite{harmonicph}. In this way,  together with \cite{1parameterGrassmannian}, our  work  serves as a bridge between the ideas presented in~\cite{edit} and~\cite{harmonicph}.

\paragraph{Generalized persistence diagrams.}
 Via M\"obius inversion,  in \cite{Patel2018} Patel introduced \emph{generalized}  1-parameter persistence modules. A generalized 1-parameter persistence module is a functor  $H : \lp \to \mathcal{A}$ 
where $\mathcal{A}$ is an abelian group and a certain constructibility/finiteness condition is satisfied by $H$. In this generalization, M\"obius inversion is applied to the rank invariant~\cite[Section 7]{Patel2018} of the generalized persistence module. 

Patel's work motivated the use of M\"obius inversion in settings where the underlying poset is  more general than a linear poset (such as in multiparameter persistence, or persistence over general posets) leading to the application of M\"obius inversion to different invariants such as the generalized rank invariant~\cite{Kim2021,botnan2022signed,dynamic-kim2023,kim2023persistence}, birth-death function~\cite{edit,fasy2022persistent}, kernel function~\cite{gal-conn}, meta-rank~\cite{metarank-clause}, graded rank function~\cite{betthauser2019graded}, cup-product related invariants~\cite{cup-persistent}, etc. See also~\cite{krishnan-ogle-invert}. As mentioned above, in contrast with the case of 1-parameter filtrations (or persistence modules), when applying M\"obius inversion for defining a generalized persistence diagram for filtrations (or persistence modules) defined over a general poset, a drawback arises: the resulting (generalized) persistence diagram might attain \emph{negative} values ~\cite{Kim2021,botnan2022signed, edit, gal-conn,kim2023persistence} thus  complicating its interpretation as multiplicities.


\paragraph{Beyond Persistence Modules: Invariants of Filtered Chain Complexes.}


The standard persistent homology pipeline applies the homology functor to a simplicial filtration to produce a persistence module. However, this procedure inherently involves an intermediate step: the \emph{filtered chain complex} (FCC).  Our work is part of a broader effort to develop invariants for filtered chain complexes. Indeed, rather than focusing solely on the resulting persistence module, one can analyze this intermediate structure to extract richer invariants (see Figure~\ref{fig: pipeline with fccs}).
Indeed, various invariants that  arise from a simplicial filtration at the chain level have been investigated, such as: 
\begin{enumerate}
    \item Algebraic decomposition of FCCs and verbose barcodes~\cite{usher, ephemral,landi2021invariants,chacholski2023decomposing},
    \item Birth-death functions/spaces~\cite{saecular, edit, morozov-patel}, 
    \item Persistent Laplacians~\cite{lieutier-pdf, persSpecGraph, persLap}.
\end{enumerate}

Below we expand upon concepts outlined in items 2 and 3.

\medskip

In \cite{edit} the authors consider the notion of \emph{birth-space} spaces and functions, with the latter arising as the dimension of the former. In that article, the authors consider a notion of persistence diagram induced by birth-death functions through M\"obius inversion (using the product order on the poset of intervals; see.~\cref{parag: poset of int}). In contrast to our work, neither their work nor  \cite{saecular, morozov-patel} explore the possibility of utilizing more information than just the dimension of  birth-death spaces. This possibility is interesting since one expects  birth-death spaces to carry information not only about birth and death parameter value, but also about cycle representatives of all persistent homological features. 

The persistent Laplacian construction, which relies on a fixed inner product on the chain groups, \cite{lieutier-pdf, persSpecGraph,persLap}, through its 0-eigenspace (i.e. its kernel), provides us with an interesting inner product space assignment to every interval in the underlying linear poset of a filtration. In related settings, the presence of inner products offers an enriched and more versatile framework to operate within (see for example \cite{ogle-sultan-inner}), allowing the selection of canonical choices of representatives, as demonstrated in~\cite{harmonicph}. Nevertheless, no successful attempts have been made in order to  identify a notion of persistence diagram for  persistent Laplacians which is able to capture more information than what is available through the dimension of its kernel (which reduces to the usual persistent Betti number of the filtration; see \cite[Theorem 2.7]{persLap}). One such attempt is described in the companion paper \cite{1parameterGrassmannian}.



\paragraph{Edit Distance Stability of Persistence Diagrams.}
The notions of edit distance between filtrations and edit distance between persistence diagrams were introduced by  McCleary and Patel in \cite{edit}, where they also established the edit distance stability of persistence diagrams. Their stability result leverages the fact that the \emph{algebraic}\footnote{We use the term `algebraic' to stress the fact that their notion relies on a step consisting of taking  the group completion.} M\"obius inversion is a functor when the  order on segments is crucially chosen to be the product order. Similarly, our stability results (\cref{thm: goi functor and stable}) also make use of a notion of functoriality for monoidal M\"obius inverses. To obtain persistence diagrams, McCleary and Patel apply (algebraic) M\"obius inversion to the function given as the dimension of the birth-death spaces. In contrast, we apply Orthogonal Inversion to the function assigning to an interval the actual birth-death vector space associated to that interval (endowed with an inner product structure). In particular, we prove that our stability result improves upon that of \cite{edit}; see   \cref{thm: edit between signed is lower bound for edit between grassmannian} and~\cref{rmk:kracht}.

\paragraph{Other aspects.}
Additional aspects related to Grassmanian persistence diagrams for 1-parameter filtrations are explored in the companion paper \cite{1parameterGrassmannian}. These include: the connection to standard persistence diagrams (see \cref{fig:3d-gpd}), connections to the persistent Laplacian, their distinguishing power for ultrametric and tree-like spaces, canonical representatives for bars  as well as algorithmic aspects.  The algorithmic aspects  of the $2$-parameter case is discussed in \cref{rem: computability of 2-parameter}, where we highlight how the computational aspects extend from the $1$-parameter setting.

\subsection*{Acknowledgements}

This work was partially supported by NSF DMS \#2301359, NSF CCF \#2310412, NSF CCF \#2112665, NSF CCF \#2217058, NSF RI \#1901360. We would like to thank Woojin Kim and Alex McCleary for their suggestions and feedback which helped us improve the exposition of the material. Additionally, we would like to express our gratitude to Sanjeevi Krishnan for suggesting that we explore orthomodular lattices as a means of generalizing our approach. We also thank Amit Patel for encouraging us to generalize a previous version of our construction.

\section{Preliminaries}\label{sec: prelim}

In this section, we recall the background concepts that will be used in the upcoming sections of the paper.

\paragraph{Grassmannian.} Let $V$ be a finite-dimensional vector space. The set of all $d$-dimensional linear subspaces of $V$ is a well-studied topological space, called the~\emph{$d$-Grassmannian of $V$} and denoted $\gr(d,V)$. As we will be working with linear subspaces with varying dimensions, we consider the disjoint union
\[
\gr(V) := \coprod_{0\leq d\leq \dim(V)} \gr(d,V)
\]
\noindent which is called the~\emph{Grassmannian of $V$}. Note that $\gr(V)$ is closed under the sum of subspaces. That is, for two linear subspaces $W_1, W_2 \subseteq V$, their sum $W_1 + W_2 := \{w_1 + w_2 \mid w_1 \in W_1, w_2 \in W_2 \} \subseteq V$ is also a linear subspace. Thus, the triple $(\gr(V), +, \{ 0\})$ forms a commutative monoid. Moreover, $(\gr(V), \subseteq)$ is a poset. 
\nomenclature[01]{$\gr(V)$}{Grassmanian of the vector space $V$}

\begin{framed}
In this paper, we only utilize the monoidal structure and  the natural partial order on $\gr(V)$, without referring to its topology.
\end{framed}

\begin{definition}[Transversity]
    Let $V$ be any finite-dimensional vector space.
    A family $\{ W_i\}_{i=1}^m$ of subspaces of $V$ is said to be a \emph{transversal family} if 
    \[
    \dim \left ( \sum_{i=1}^n W_i \right ) = \sum_{i=1}^n \dim (W_i).
    \]

\paragraph{Classical Algebraic Structures.} We denote by $\Vec$ the category of finite dimensional vector spaces over $\R$ and by $\mathrm{Ch}(\Vec)$ the category of chain complexes in $\Vec$.

\nomenclature[02]{$\Vec$}{Category of finite dimensional vector spaces over $\R$}

\nomenclature[02]{$\mathrm{Ch}(\Vec)$}{Category of chain complexes in Vec.}

\medskip
The main object of study in this paper is that of a persistence modules over posets.
\begin{definition}[Persistence modules]
    Let $P$ be any finite poset. A functor $M: P \to \Vec$ is called a \emph{persistence module}.
\end{definition}

\end{definition}

\subsection{Edit Distance} 
The notion of \emph{edit distance} employed in this paper is closely related to the one considered in~\cite{edit}, where the authors introduce their version as the categorification of the Reeb graph edit distance discussed in~\cite{DiFabio2012, DiFabio2016, bauer-edit-2016, Bauer2020}. Let $\mathcal{C}$ be a category and assume that for every morphism $f : A \to B$ in $\mathcal{C}$, there is a cost $c_\mathcal{C} (f) \in \R^{\geq 0}$ associated to it. For two objects $A$ and $B$ in this category, a~\emph{path}, $\mathcal{P}$, is a finite sequence of morphisms
\[
\mathcal{P} : \; \; A \xleftrightarrow{\makebox[1cm]{$f_1$}} D_1 \xleftrightarrow{\makebox[1cm]{$f_2$}} \cdots \xleftrightarrow{\makebox[1cm]{$f_{k-1}$}} D_{k-1} \xleftrightarrow{\makebox[1cm]{$f_k$}} B
\]
where $D_i$s are objects in $\mathcal{C}$ and $\leftrightarrow$ indicates a morphism in either direction. The~\emph{cost of a path} $\mathcal{P}$, denoted $c_\mathcal{C} (\mathcal{P})$, is the sum of the cost of all morphisms in the path.
\[
c_\mathcal{C} (\mathcal{P}) := \sum_{i=1}^k c_\mathcal{C} (f_i).
\]

\begin{definition}[Edit distance]\label{defn: edit dist}
    The~\emph{edit distance} $d_{\mathcal{C}}^E (A,B)$ between two objects $A$ and $B$ in $\mathcal{C}$ is the infimum, over all paths between $A$ and $B$, of the cost of such paths.
    \[
    d_\mathcal{C}^E(A,B) := \inf_{\mathcal{P}} c_\mathcal{C}(\mathcal{P}).
    \]
\end{definition}
\nomenclature[02]{$d_{\mathcal{C}}^E(A,B)$}{Edit distance between objects $A$ and $B$ in a category $\mathcal{C}$}

While the definition of edit distance may appear rather abstract, it is worth noticing that in~\cite[Theorem 9.1]{edit}, the authors constructed a category of persistence diagrams  and established that the edit distance within this context is bi-Lipschitz equivalent to the well-known bottleneck distance~\cite{cohen-steiner2007}.

\subsection{Posets and (Algebraic) M\"obius Inversion}

The notion of M\"obius inversion can regarded as a combinatorial analog to the notion of derivative. It is one of the most fundamental tools in the algebraic-combinatorial approach to the study of persistence modules developed in~\cite{Patel2018}. Here, we recall the essential background on M\"obius inversion required for this paper and direct readers to~\cite{rota64} for a more comprehensive overview.

\paragraph{Poset(s) of Segments.}\label{parag: poset of int} 
Let $(P, \leq)$ be a poset. For any $a\leq b$, we refer to pair $(a,b) \in P\times P$ as a \emph{bounded segment}. For every $a\in P$, we introduce a distinguished pair, denoted $(a,\infty)$, and we refer to these pairs as \emph{unbounded segments}. Unbounded segments, as defined here, enable us to conceptualize the absence of a maximum element in a segment, thus allowing the identification of cycles in a filtration that never die (or, die at infinity). We denote the collection of bounded and unbounded segments in $P$ by $\Seg(P)$, which we refer to as the \emph{set of segments} of $P$. We denote by $\diag(P):= \{ (a,a) \in \Seg(P) \mid a \in P \}$ the~\emph{diagonal} of $\Seg(P)$. 
\nomenclature[03]{$\Seg(P)$}{The set of segments of a poset $P$}
\nomenclature[04]{$\diag(P)$}{The diagonal of $\Seg(P)$}

\begin{framed}
    In this paper, we will assume that all posets are finite unless otherwise specified.
\end{framed}

The \emph{product order} on $\Seg (P)$, $\prodord$, is given by the restriction of the product order on $P\times P$ to $\Seg(P)$. More precisely,  
\[
    (b_1,d_1) \prodord (b_2,d_2) \iff b_1\leq b_2 \text{ and } d_1\leq d_2 
\]
where we assume $p < \infty$ for all $p \in P$. We will also use another order on $\Seg (P)$, called the \emph{reverse inclusion order}. The reverse inclusion order on $\Seg (P)$, denoted $\incord$, is given by
\[
    (b_1,d_1) \incord (b_2,d_2) \iff b_1\leq b_2 \text{ and } d_1\geq d_2. 
\]


We denote by
\begin{itemize}
\item $\overline{P}^\times := (\Seg (P), \prodord)$ the poset of segments with the product order;
\item  $\overline{P}^\supseteq := (\Seg(P), \incord)$ the poset of non-diagonal segments with the reverse inclusion order. 
\end{itemize}
\nomenclature[05]{$\prodord$}{The product order on the segments of a poset}
\nomenclature[06]{$\incord$}{The reserve inclusion order on the segments of a poset}
\nomenclature[07]{$\overline{P}^\times$}{The poset of segments of $P$ with the product order}
\nomenclature[08]{$\overline{P}^\supseteq$}{The poset of segments of $P$ with the reverse inclusion order}

\begin{remark}[About the choice of order on $\Seg(P)$]
    The reverse inclusion order $\incord$ is the one that has been traditionally used in the applied algebraic community, in particular it is the one  used to induce signed barcodes from the rank invariant in \cite{botnan2022signed}. The product order $\prodord$ has been advocated by McCleary and Patel due to its favorable interaction with M\"obius inversion of birth-death functions; see \cite{edit,morozov-patel}. In this paper, we will will mostly utilize the product order on $\Seg(P).$
\end{remark}

Notice that if $f: P \to Q$ is an order-preserving map between two posets $P$ and $Q$, then $f$ induces an order-preserving map between the posets of segments $\overline{P}^\times$ and $\overline{Q}^\times$. We denote by $\overline{f} : \overline{P}^\times \to \overline{Q}^\times$ the map induced by $f$ that acts to segments component-wisely. That is, $\overline{f} ((b,d)) := (f(b), f(d))$ and $\overline{f} ((b,\infty)) := (f(b), \infty)$ for every $b\leq d \in P$. Note that with the convention described above, this condition means that $\overline{f}$ preserve the type of segments, i.e., it maps (un)bounded segments to (un)bounded segments. 

\paragraph{Metric Posets.} 
A finite~\emph{(extended) metric poset} is a pair $(P, d_P)$ where $P$ is a finite poset and $d_P : P \times P \to \R \cup \{ \infty \}$ is an (extended) metric
 such that for every $p_1\leq p_2 \leq p_3 \in P$, $d_P (p_1,p_2) \leq d_P (p_1,p_3)$ and $d_P(p_2,p_3) \leq d_P (p_1,p_3)$. 
A \emph{morphism of finite metric posets} $\alpha: (P, d_P) \to (Q, d_Q)$ is an order-preserving map $\alpha: P \to Q$. The \emph{distortion} of a morphism $\alpha : (P, d_P) \to (Q,d_Q)$, denoted $\dis(\alpha)$, is
\[
\dis(\alpha) := \max_{p_1,p_2 \in P} \big|d_P (p_1,p_2) - d_Q (\alpha(p_1), \alpha(p_2))\big|.
\]

\nomenclature[09]{$\dis(\alpha)$}{Distortion of the morphism $\alpha$ between two metric posets}

For every finite metric poset $(P, d_P)$, its poset of segments $\overline{P}^\times$ is also a metric poset with 
$$d_{\overline{P}^\times} ((b_1,d_1), (b_2,d_2)) := \max \{d_P(b_1,b_2), d_P(d_1,d_2) \}.$$ 
A morphism of finite metric posets $\alpha : (P, d_P) \to (Q, d_Q)$ induces a morphism of finite metric posets $\overline{\alpha} : \left(\overline{P}^\times ,d_{\overline{P}^\times}\right) \to \left(\overline{Q}^\times, d_{\overline{Q}^\times}\right)$ via $\overline{\alpha} ((b,d)) = (\alpha(b), \alpha(d))$, with $\dis(\alpha) = \dis(\overline{\alpha})$; see  \cite[Proposition 3.4]{edit}.

\paragraph{M\"obius Inversion.} \label{pg:mi} Let $P$ be a poset and $\mathcal{M}$ be a commutative monoid. Let $ \kappa (\mathcal{M})$ be the Grothendieck group completion of $\mathcal{M}$ (see~\cref{appendix:details}). The abelian group $\kappa (\mathcal{M})$ consists of equivalence classes of pairs $(m,n) \in \mathcal{M} \times \mathcal{M}$ under the equivalence relation described in~\cref{appendix:details}. Let $\varphi_\mathcal{M} : \mathcal{M} \to \kappa (\mathcal{M})$ denote the canonical morphism that maps $m\in \mathcal{M}$ to the equivalence class of $(m,0)$ in $\kappa (\mathcal{M})$. Let $m :P \to \mathcal{M}$ be a function. Whenever it exists, we define the~\emph{algebraic M\"obius inverse} of $m$ to be the unique function $\partial_P (m) : P \to \kappa (\mathcal{M}) $ satisfying, for all $p\in P$,
    \[
    \sum_{p'\leq p} \partial_P (m) (p') = \varphi_\mathcal{M} (m(p)).
    \]
 
\nomenclature[091]{$\kappa(\cdot)$}{Grothendieck group completion of a commutative monoid}

Let $\mathcal{G}$ be an abelian group. Then, the group completion of $\mathcal{G}$ is isomorphic to $\mathcal{G}$. That is, $\mathcal{G}$ and $\kappa (\mathcal{G})$ can be identified and the canonical map $\varphi_\mathcal{G}$ can be taken as the identity map. In this case, if $g : P \to \mathcal{G}$ is a function, then its algebraic M\"obius inverse, whenever it exists, is the \emph{unique} function $\partial_P g : P \to \mathcal{G}$ satisfying, for all $p\in P$,
    \begin{equation}\label{eqn: defining property of mi}
        \sum_{p'\leq p} \partial_P (g) (p') = g(p).
    \end{equation}

\nomenclature[10]{$\partial_P(\cdot)$}{M\"obius inverse of a function defined on a poset $P$}

\begin{remark}[Computing M\"obius inverses]\label{rem: computing mob inverse}
    Observe that for a function $g: P \to \mathcal{G}$, where $P$ is a finite poset and $\mathcal{G}$ is any abelian group, the M\"obius inverse $\partial_P (g) : P \to \mathcal{G}$ can be computed inductively along the poset structure by using~\cref{eqn: defining property of mi}. Explicitly:
    \begin{enumerate}
        \item \underline{Base case:} For every minimal element in $p_{\min}\in P$, we must have $\partial_P (g)(p_{\min}) = g(p_{\min})$ since there are no $p' < p_{\min}$.
        \item \underline{Inductive step:} For any non-minimal $p\in P$, we compute 
        \[
        \partial_P (g) (p) = g(p) - \sum_{p' < p} \partial_P (g)(p'),
        \]
        assuming that the values $\partial_P (g)(p')$ for $p' < p$ have already been determined.
    \end{enumerate}
    
    This inductive procedure applies to any finite poset and provides a general method for computing M\"obius inverses. More explicit inversion formulas can be derived in certain structured settings—such as the poset of segments of a finite linear poset or of a $2$-D grid. For the case of segments of a linear poset, we provide such a formula in~\cref{prop: algebraic mobius inversion formulas}; for segments of a $2$-D grid, explicit formulas appear in~\cite[Lemma 2.7 and Corollary 2.8]{morozov-patel} and~\cite[Equation 5.1]{botnan2022signed}.
\end{remark}

\begin{proposition}\label{prop: algebraic mobius inversion formulas}
    Let $\lp = \{ \ell_1 < \cdots < \ell_n \}$ be a finite linearly ordered set, let $m : \overline{\lp}^\times \to \mathcal{G}$ be any function. Then, the algebraic M\"obius inverse of $m$, $\partial_{\overline{\lp}^\times}(m) : \overline{\lp}^\times \to \mathcal{G}$, is given by
\begin{align}
    \partial_{\overline{\lp}^\times}(m) ((\ell_i, \ell_j)) &= m((\ell_i, \ell_{j})) - m((\ell_i, \ell_{j-1})) + m((\ell_{i-1}, \ell_{j-1})) - m((\ell_{i-1}, \ell_{j})), \label{eqn: times algebraic mob inv1} \\
    \partial_{\overline{\lp}^\times}(m) ((\ell_i, \infty)) &= m((\ell_i, \infty)) - m((\ell_i, \ell_{n})) + m((\ell_{i-1}, \ell_{n})) - m((\ell_{i-1}, \infty)), \label{eqn: times algebraic mob inv2} \\
    \partial_{\overline{\lp}^\times}(m) ((\ell_i, \ell_i)) &= m ((\ell_i, \ell_i)) - m ((\ell_{i-1}, \ell_{i})), \label{eqn: times algebraic mob inv3} 
\end{align}
for $1\leq i  < j \leq n$, where we follow the convention that the expressions of the form $m((\ell_0, \ell_j))$ and $m((\ell_0 , \infty))$ are assumed to be $0$. 
\end{proposition}

\begin{remark}\label{remark: convention edge cases}
    Breaking down the conventions of the proposition above, we have:
    \begin{align*}
        \partial_{\overline{\lp}^\times}(m) ((\ell_1, \ell_1)) &= m((\ell_1, \ell_{1})), \\
        \partial_{\overline{\lp}^\times}(m) ((\ell_1, \infty)) &= m((\ell_1, \infty)) - m((\ell_1 , \ell_n)), \\
        \partial_{\overline{\lp}^\times}(m) ((\ell_1, \ell_j)) &= m((\ell_1, \ell_{j})) - m((\ell_1, \ell_{j-1})) \text{ for } 1 < j \leq n.
    \end{align*}
\end{remark} 

\subsection{Galois Connections}

\begin{definition}[Galois connections]\label{defn: galois connections}
    Let $P$ and $Q$ be any two posets.\footnote{Finiteness is not required.} A pair, $(\ladj{f}, \radj{f})$, of order-preserving maps, $\ladj{f} : P \to Q$ and $\radj{f} : Q \to P$, is called a~\emph{Galois connection} if they satisfy
    \[
    \ladj{f}(p) \leq q \iff p \leq \radj{f} (q)
    \]
    for every $p\in P$, $q\in Q$. We refer to $\ladj{f}$ as the~\emph{left adjoint} and refer to $\radj{f}$ as the~\emph{right adjoint}. We will also use the notation $\ladj{f} : P \leftrightarrows Q : \radj{f}$ to denote a Galois connection.
\end{definition}

The left and right adjoints of a Galois connection can be expressed in terms of each
other as follows:
\begin{align*}
    \ladj{f} (p) &= \min \{ q\in Q \mid p \leq \radj{f}(q) \} \\
    \radj{f} (q) &= \max \{ p\in P \mid \ladj{f}(p) \leq q \}.
\end{align*}

\begin{example}
    Consider the inclusion $\iota: \Z \hookrightarrow \R$, the ceiling function $\lceil \cdot \rceil : \R \to \Z$, and the floor function $\lfloor \cdot \rfloor : \R \to \Z$. The pairs $\left(\lceil \cdot \rceil, \iota\right)$ and $\left(\iota, \lfloor \cdot \rfloor\right)$ are Galois connections.
\end{example}

\begin{remark}
    Notice that the composition of Galois connections $\ladj{f} : P \leftrightarrows Q : \radj{f}$ and $\ladj{g} : Q \leftrightarrows R : \radj{g}$ is also a Galois connection $$\ladj{g} \circ \ladj{f} : P \leftrightarrows R : \radj{f} \circ \radj{g}.$$ Also, note that a Galois connection $\ladj{f} : P \leftrightarrows Q : \radj{f}$ induces a Galois connection on the poset of segments $\overline{\ladj{f}} : \overline{P}^\times \leftrightarrows \overline{Q}^\times :\overline{\radj{f}}$. These properties of Galois connections will later be utilized in defining morphisms in certain categories that we will introduce in and~\cref{sec:generalized orthogonal inversion}.
\end{remark}

\begin{definition}[Pushforward and pullback]
Let $f: P \to Q$ be any order-preserving map between two posets, and let $m : P \to \mathcal{G}$ be any function. The~\emph{pushforward} of $m$ along $f$ is the function $f_\sharp m : Q \to \mathcal{G}$ given by
$$f_\sharp m (q) := \sum_{p \in f^{-1}(q)} m(p).$$
Let $h : Q \to \mathcal{G}$ be any function. The~\emph{pullback} of $h$ along $f$ is the function $f^\sharp h : P \to \mathcal{G}$ given by
$$(f^\sharp h )(p) := h( f(p)).$$
\end{definition}
\nomenclature[11]{$(\cdot)_\sharp$}{Pushforward along a map}
\nomenclature[12]{$(\cdot)^\sharp$}{Pullback along a map}

The following theorem, Rota's Galois Connection Theorem (RGCT), describes how M\"obius inversion behaves when a Galois connection exists between two posets.

\begin{theorem}[RGCT~{\cite[Theorem 3.1]{gal-conn}}]\label{thm:rgct}
    Let $P$ and $Q$ be finite posets and $(\ladj{f}, \radj{f})$ be a Galois connection. Then,
    \begin{equation}\label{eqn: rgct push vs pull}
        (\ladj{f})_\sharp \circ \partial_P = \partial_Q \circ (\radj{f})^\sharp.
    \end{equation}
\end{theorem}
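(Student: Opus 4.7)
The plan is to exploit the uniqueness of the Möbius inverse. Recall that, for any function $h : Q \to \mathcal{G}$, the Möbius inverse $\partial_Q h$ is the unique function whose cumulative sums $\sum_{q' \leq q} \partial_Q h(q')$ recover $h(q)$ for every $q \in Q$. Hence, to establish~\eqref{eqn: rgct push vs pull}, it suffices to fix an arbitrary $m : P \to \mathcal{G}$ and verify that $(\ladj{f})_\sharp(\partial_P m) : Q \to \mathcal{G}$ satisfies the defining identity of the Möbius inverse of $(\radj{f})^\sharp m$, namely
\[
\sum_{q' \leq q} (\ladj{f})_\sharp(\partial_P m)(q') \;=\; (\radj{f})^\sharp m(q) \qquad \text{for every } q \in Q.
\]

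The core of the argument is a three-step manipulation of this sum. First, I would unwind the definition of pushforward and interchange the order of summation, using the fact that the fibers $\ladj{f}^{-1}(q')$ for $q' \leq q$ partition $\{p \in P : \ladj{f}(p) \leq q\}$, to rewrite the left-hand side as $\sum_{p : \ladj{f}(p) \leq q} \partial_P m(p)$. Second, I would apply the defining biconditional of the Galois connection, $\ladj{f}(p) \leq q \iff p \leq \radj{f}(q)$, to re-index this sum as $\sum_{p \leq \radj{f}(q)} \partial_P m(p)$. Third, by the defining identity of $\partial_P m$, this cumulative sum evaluates to $m(\radj{f}(q))$, which by definition of pullback is exactly $(\radj{f})^\sharp m(q)$.

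I do not anticipate any serious obstacle. The finiteness of $P$ and $Q$ ensures that all sums are well-defined and that the interchange of summation in the first step is legitimate, while the uniqueness of Möbius inverses over an abelian group is standard (the incidence relation of any finite poset is unitriangular with respect to any linear extension, hence invertible). The substantive content of the theorem is therefore concentrated entirely in the Galois connection step, which is precisely what converts a sum over the preimage of a down-set under $\ladj{f}$ into a sum over the principal down-set at $\radj{f}(q)$.
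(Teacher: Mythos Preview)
Your proposal is correct. The paper does not actually prove this theorem (it is cited from \cite{gal-conn}), but your argument is precisely the one the paper uses to establish its monoidal analog, Theorem~\ref{thm: monoidal rgct}: unwind the pushforward, collapse the double sum into $\sum_{\ladj{f}(p)\leq q}$, re-index via the Galois adjunction to $\sum_{p\leq \radj{f}(q)}$, and invoke the defining property of the M\"obius inverse; the only difference is that in the abelian-group setting you may conclude equality by uniqueness of $\partial_Q$, whereas in the monoidal setting one obtains only M\"obius equivalence.
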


\begin{figure}
    \centering
    \includegraphics[scale=15]{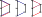}
    \caption{An illustration of RGCT.}
    \label{fig:RGCTdemo}
\end{figure}

\begin{example}\label{ex:gal conn demo}
    Let $P = \{ p_1 < p_2 < p_3 \}$ and $Q = \{ q_1 < q_2\}$ be two posets. Let
    \begin{align*}
        \ladj{f} : P &\to Q & \radj{f}: Q &\to P \\
                   p_1 &\mapsto q_1 & q_1 &\mapsto p_1\\
                   p_2 &\mapsto q_2 & q_2 &\mapsto p_3\\
                   p_3 &\mapsto q_2
    \end{align*}  
    Then, $(\ladj{f}, \radj{f})$ is a Galois connection. In~\cref{fig:RGCTdemo}, $\ladj{f}$ is depicted with the red arrows and $\radj{f}$ is depicted with the blue arrows. In the middle of~\cref{fig:RGCTdemo}, we illustrate two functions. First, $m : P \to \Z$ is a function on $P$, whose values are given by $m(p_1) =1$, $m(p_2) =2$ and $m(p_3) =5$. Second, $(\radj{f})^\sharp m : Q \to \Z$ is a function on $Q$, whose values are given by $(\radj{f})^\sharp m (q_1) := m(\radj{f}(q_1)) = m(p_1) =1$ and similarly $(\radj{f})^\sharp m (q_2):= m(\radj{f}(q_2)) = m(p_3) =5$. On the right of~\cref{fig:RGCTdemo}, we illustrate the M\"obius inverses of these functions, namely, $\partial_P (m)$ and $\partial_Q ((\radj{f})^\sharp m)$. Notice that the pushforwad of $\partial_P (m)$ along $\ladj{f}$ is equal to $\partial_Q ((\radj{f})^\sharp m)$. That is, $\partial_Q ((\radj{f})^\sharp m) = (\ladj{f})_\sharp (\partial_P (m))$ as stated in~\cref{thm:rgct}.
\end{example}

\subsection{Simplicial Complexes and Filtrations}

In this section, we introduce fundamental concepts and definitions, including simplicial complexes, filtrations, persistent Betti numbers, the concepts of birth and death of cycles, and birth-death spaces.

\paragraph{Simplicial Complexes and Chain Spaces.} An (abstract) \emph{finite simplicial complex} $K$ over a finite ordered vertex set $V$ is a non-empty collection of non-empty subsets of $V$ with the property that for every $\sigma \in K$, if $\tau\subseteq \sigma$, then $\tau \in K$. An element $\sigma \in K$ is called a \emph{$\dgr$-simplex} if the cardinality of $\sigma$ is $\dgr+1$. An \emph{oriented simplex}, denoted $[\sigma]$, is a simplex $\sigma \in K$ whose vertices are ordered. We always assume that ordering on simplices is inherited from the ordering on $V$. Let $\mathfrak{s}_\dgr^K$ denote the set of all oriented $\dgr$-simplices of $K$. 
\nomenclature[13]{$\mathfrak{s}_\dgr^K$}{set of all oriented $\dgr$-simplices of a simplicial complex $K$}

The \emph{$\dgr$-th chain space of $K$}, denoted $C_\dgr^K$, is the vector space over $\R$ with basis $\mathfrak{s}_\dgr^K$. Let $n_\dgr^K := |\mathfrak{s}_\dgr^K| = \dim_\R (C_\dgr^K)$. The \emph{$\dgr$-th boundary operator} $\partial_\dgr^K : C_\dgr^K \to C_{\dgr-1}^K$ is defined by
\[
    \partial_\dgr^K ([v_0,\ldots,v_\dgr]) := \sum_{i=0}^{\dgr}(-1)^i[v_0,\ldots,\hat{v}_i,\ldots,v_\dgr]
\]
for every oriented $\dgr$-simplex $[\sigma] = [v_0,\ldots,v_\dgr]\in \mathfrak{s}_\dgr^K$, where $[v_0,\ldots,\hat{v}_i,\ldots,v_\dgr]$ denotes the omission of the $i$-th vertex, and extended linearly to $C_\dgr^K$. We denote by $\Zfunc_\dgr(K)$ the space of $\dgr$-cycles of $K$, that is $$\Zfunc_\dgr(K) := \ker \left(\partial_\dgr^K\right),$$ and we denote by $\Bfunc_\dgr(K)$ the space of $\dgr$-boundaries of $K$, that is $$\Bfunc_\dgr(K):= \Ima \left(\partial_{\dgr+1}^K\right).$$ Additionally, we denote by $H_\dgr(K)$ the $\dgr$-th homology group of $K$, that is $$H_\dgr(K) := \frac{\Zfunc_\dgr(K)}{ \Bfunc_\dgr(K)}.$$
\nomenclature[14]{$C_\dgr^K$}{$\dgr$-th chain space of of a simplicial complex $K$}
\nomenclature[15]{$\partial_\dgr^K$}{$\dgr$-th boundary operator of a simplicial complex $K$}
\nomenclature[16]{$\Zfunc_\dgr(\cdot)$}{Space of $\dgr$-cycles}
\nomenclature[17]{$\Bfunc_\dgr(\cdot)$}{Space of $\dgr$-boundaries}
\nomenclature[18]{$H_\dgr(\cdot)$}{$\dgr$-th homology group}

For each integer $\dgr\geq 0$, we define an inner product, $\langle \cdot , \cdot \rangle_{C_\dgr^K}$, on $C_\dgr^K$ as follows:
\[
\langle [\sigma] , [\sigma'] \rangle_{C_\dgr^K} := \delta_{[\sigma], [\sigma']} \text{, for all } [\sigma],[\sigma'] \in \mathfrak{s}_\dgr^K,
\]
where $\delta_{\bullet, \bullet}$ is the Kronecker delta.
That is, we declare that $\mathfrak{s}_\dgr^K$ is an orthonormal basis for $C_\dgr^K$. We will refer to $\langle \cdot , \cdot \rangle_{C_\dgr^K}$ as the~\emph{standard} inner product on $C_\dgr^K$. We will omit the subscript from the notation $\langle \cdot , \cdot \rangle_{C_\dgr^K}$ when the context is clear. We denote by $\left(\partial_\dgr^K\right)^* : C_{\dgr-1}^K \to C_\dgr^K$ the adjoint of $\partial_\dgr^K$ with respect to the standard inner products  on $C_\dgr^K$ and $C_{\dgr-1}^K$.
\nomenclature[19]{$\langle \cdot , \cdot \rangle_{C_\dgr^K}$}{Standard inner product on $C_\dgr^K$}

\begin{framed}
    In this paper, we work with a fixed finite simplicial complex $K$ and primarily use real coefficients. However, some of our results, definitions, and remarks still hold when the coefficients are taken from any field. Specifically, whenever a result, definition, example, or remark involves the symbols $\goi$, or $\ominus$, real coefficients are required. In all other cases, the results hold with coefficients from any field.
\end{framed}

\paragraph{Simplicial Filtrations.}\label{parag: simplicial filtration} For a finite simplicial complex $K$, let $\subcx(K)$ denote the poset of subcomplexes of $K$, ordered by inclusion. A \emph{simplicial filtration of $K$} is an order-preserving map $\Ffunc : P \to \subcx (K)$, where $P$ is a finite poset. 
A \emph{$1$-parameter} filtration of $K$ is a filtration $\Ffunc : \lp \to \subcx(K)$ where $\lp = \{\ell_1<\cdots<\ell_n \}$ is a finite linearly ordered set and $\Ffunc(\ell_n) = K$. When $\Ffunc : \{ \ell_1<\cdots<\ell_n\} \to \subcx (K)$ is a $1$-parameter filtration, we use the notation $K_i$ for the simplicial complex $\Ffunc(\ell_i)$ and succinctly write $\Ffunc = \{K_i \}_{i=1}^n$ to denote the simplicial filtration. Note that $K_n = K$. 
\nomenclature[20]{$\subcx(\cdot)$}{Poset of subcomplexes of a simplicial complex, ordered by inclusion}

\begin{definition}[Persistent Betti numbers / rank invariant \cite{Edelsbrunner2002}]\label{defn: persistent betti rank inv}
    Let $\Ffunc : P \to \subcx (K)$ be a filtration. For $(p,p')\in \Seg(P)$, let $\iota_\dgr^{p,p'} : H_\dgr (\Ffunc(p)) \to H_\dgr (\Ffunc(p'))$ denote the homomorpshim induced by the inclusion $\Ffunc(p) \hookrightarrow \Ffunc(p')$. We define the \emph{$\dgr$-th persistent Betti number} for the segment $(p,p')$ as
    \[
    \beta_\dgr^\Ffunc((p,p')) := \rank \left(\iota_\dgr^{p,p'}\right).
    \]
    As is customary in applied algebraic topology, we also use the term \emph{rank invariant} to refer to persistent Betti numbers. 
\end{definition}
\nomenclature[21]{$\beta_\dgr^\Ffunc((\cdot, \cdot))$}{$\dgr$-th persistent Betti numbers}


Let $\Ffunc = \{K_i \}_{i=1}^n$ be a filtration of $K$. Observe that, for any dimension $\dgr\geq 0$, the inclusion of simplicial complexes $K_i \subseteq K_j$, for $i \leq j$, induces canonical inclusions on the cycle and boundary spaces. In particular, for any $i = 1,\ldots,n$, the $\dgr$-th cycle and boundary spaces of $K_i$ can be identified with subspaces of $C_\dgr^{K_n} = C_\dgr^K$:

\begin{center}
    \begin{tikzcd}
\Zfunc_\dgr(K_i) \arrow[r, hook]                 & \Zfunc_\dgr(K_j) \arrow[r, hook]                 & \Zfunc_\dgr(K) \arrow[r, hook] & C_\dgr^K \\
\Bfunc_\dgr(K_i) \arrow[r, hook] \arrow[u, hook] & \Bfunc_\dgr(K_j) \arrow[r, hook] \arrow[u, hook] & \Bfunc_\dgr(K) \arrow[u, hook] &      
    \end{tikzcd}
\end{center}

\begin{definition}[Birth-death spaces]\label{defn: bd space}
    Let $\Ffunc : P \to \subcx (K)$ be a filtration. For any degree $\dgr\geq 0$, the $\dgr$-th~\emph{birth-death spaces} associated to $\Ffunc$ is defined as the function $\ZB_\dgr^\Ffunc : \overline{P}^\times\to \gr\left(C_\dgr^K\right)$ given by
    \begin{align*}
        \ZB_\dgr^\Ffunc ((b, d)) &:= \Zfunc_\dgr \big(\Ffunc(b)\big)\cap \Bfunc_\dgr \big(\Ffunc(d)\big), \\
        \ZB_\dgr^\Ffunc ((b,\infty)) &:= \Zfunc_\dgr \big(\Ffunc(b)\big).
    \end{align*} 

\end{definition}
\nomenclature[22]{$\ZB_\dgr^\Ffunc$}{$\dgr$-th birth-death spaces associated to a filtration $\Ffunc$}

Informally, when $b\leq d$, for a cycle $z$ to be in the birth-death space $\ZB_\dgr^\Ffunc((b,d))$ means that $z$ becomes ``alive" at or before $b$ and that it ``dies''  (i.e. it becomes a boundary) at or before $d$.

\begin{remark}
The classical definition of persistence diagrams, as in~\cite{cohen-steiner2007}, utilizes persistent Betti numbers. Birth-death spaces were introduced in~\cite{edit, saecular} as an alternative way to define persistence diagrams. For a $1$-parameter filtration, classical persistence diagrams and persistence diagrams obtained through utilizing the dimension of birth-death spaces coincide as shown in~\cite[Section 9.1]{edit}. McCleary and Patel showed that the use of birth-death spaces for defining persistence diagrams enables us to organize the persistent homology pipeline in a functorial way~\cite{edit}. Additionally, this functoriality leads to the edit distance stability of persistence diagrams as in~\cite[Theorem 8.4]{edit}.

\end{remark}

We now discuss how to determine from $\ZB_\dgr^\Ffunc$ the exact number of independent $\dgr$-cycles that ``\emph{are born at precisely $b$ and die precisely at $d$}". This will pave the way towards the precise notion of birth and death of cycles which we give in \Cref{def:bd-cycles}. Note that in TDA, the death time is typically used in reference to  homology classes as opposed to cycles and 
    the death time of a homology class refers to the first time when the class merges with an ``older" one, following the ``elder rule" \cite{edelsbrunner2010computational,curry2018fiber}.

\begin{remark}[Number of independent cycles that ``are born at $b$ and die at $d$"]\label{remark: number of cycles that are born at b and die at d}
Let $\Ffunc : P \to \subcx(K)$ be a filtration.  First, we observe that the definition of birth-death spaces directly implies that $\dim \left( \ZB_\dgr^\Ffunc ((b,d)) \right)$ equals the number of linearly independent cycles that 

\begin{center}``(are born at or before $b$) and (die at or before $d$)."
\end{center}

Towards our goal, we need to refine the information contained in the birth-death space. This will be accomplished by computing the quotient of $\ZB_\dgr^\Ffunc((b,d))$ by a certain subspace $W_\dgr^\Ffunc((b,d))$ that we now describe.  Consider the subspaces
\begin{align*}
    W_{]b,d)} &:= \sum_{\substack{a\leq b \\ c<d}} \ZB_\dgr^\Ffunc ((a,c)) \subseteq C_\dgr^K,  \\
    W_{)b, d]} &:= \sum_{\substack{a<b \\ c\leq d}} \ZB_\dgr^\Ffunc ((a,c)) \subseteq C_\dgr^K.    
\end{align*}
Note that $\dim \left( W_{]b, d)} \right)$  is exactly the number of linearly independent cycles that
\begin{center}
    ``(are born at or before $b$) and (die strictly before $d$).''
\end{center}
Equivalently, this can also be expressed as the number of linearly independent cycles that
\begin{center}
    ``((are born strictly before $b$) and (die strictly before $d$)) \\ or \\ ((are born at $b$) and (die strictly before $d$)).''
\end{center}
Similarly, $\dim \left( W_{)b, d]} \right)$ is exactly the number of linearly independent cycles that
\begin{center}
    ``((are born strictly before $b$) and (die strictly before $d$)) \\ or \\ ((are born strictly before $b$) and (die at $d$)).''    
\end{center}
Let 
\[
W_\dgr^\Ffunc((b,d)) := W_{]b, d)} + W_{)b, d]} = \sum_{(a,c)<_\times (b,d)} \ZB_\dgr^\Ffunc ((a,c)).
\]
Combining the two interpretations above, we  see that $\dim \left( W_{(b,d)} \right)$ is exactly the number of linearly independent cycles that 
\begin{center}
    ``((are born strictly before $b$) and (die strictly before $d$)) \\
    or \\
    ((are born at $b$) and (die strictly before $d$) \\
    or \\
    ((are born strictly before $b$) and (die at $d$)).''
\end{center}
Hence, with the above interpretations, it follows that
\begin{align*}
    \dim\left( \frac{\ZB_\dgr^\Ffunc ((b,d))}{W_\dgr^\Ffunc((b,d))} \right) &= \dim \left( \ZB_\dgr^\Ffunc ((b,d)) \right) - \dim \left( W_\dgr^\Ffunc((b,d)) \right) \\
    &=\dim \left( \ZB_\dgr^\Ffunc ((b,d)) \right) - \dim \left( \sum_{(a,c)<_\times (b,d)} \ZB_\dgr^\Ffunc ((a,c)) \right)
\end{align*}
equals the number of linearly independent cycles that 
\begin{center}
``are born precisely at $b$ and die precisely at $d$". 
\end{center}

\end{remark}

The considerations in \cref{remark: number of cycles that are born at b and die at d} lead to the following definition of \emph{lifespan} of a cycle.

\begin{definition}[Lifespan of cycles / ephemeral cycles]\label{def:bd-cycles} 
    Let $P$ be any finite poset and let $\Ffunc : P \to \subcx(K)$ be a filtration. Let $(b,d) \in \Seg(P)$. We say that a nonzero cycle $z \in C_\dgr^K$ has \emph{lifespan $(b,d)$} if the following two conditions are met:

    \begin{itemize}
        \item $z \in \ZB_\dgr^\Ffunc((b,d))$, and
        \item $z \notin W_\dgr^\Ffunc((b,d)) = \sum_{(a,c)<_\times (b,d)} \ZB_\dgr^\Ffunc((a,c))$. 
    \end{itemize}

    When a cycle $z$ is born at $b$ and dies at $b$ (i.e., $b=d$), we say that $z$ is an \emph{ephemeral} cycle.
    
\end{definition}

\begin{remark}
    Note that when $P$ is a linear poset, the number of linearly independent $\dgr$-cyles that are born at $b$ and die at $d$ coincides with the multiplicity of the interval $(b,d)$ in the degree-$\dgr$ persistence diagram of the filtration $\Ffunc : P \to \subcx(K)$. To be specific, when $P =\lp = \{ \ell_1<\ell_2<\cdots<\ell_n\}$, we have that the number
    \[
    \dim \left( \ZB_\dgr^\Ffunc ((\ell_i,\ell_j)) \right) - \dim \left( \sum_{(\ell_k,\ell_l)<_\times (\ell_i,\ell_j)} \ZB_\dgr^\Ffunc ((\ell_k,\ell_l)) \right) 
    \]
    boils down to the following expression
    \[
    \dim \left( \ZB_\dgr^\Ffunc ((\ell_i,\ell_j)) \right) - \dim \left( \ZB_\dgr^\Ffunc ((\ell_{i-1},\ell_j)) \right) + \dim \left( \ZB_\dgr^\Ffunc ((\ell_{i-1},\ell_{j-1})) \right) - \dim \left( \ZB_\dgr^\Ffunc ((\ell_i,\ell_{j-1})) \right),
    \]
    which, by~\cref{prop: algebraic mobius inversion formulas}, is the M\"obius inverse of the birth-death function evaluated at $(b,d)$. And, as shown in~\cite[Section 9.1]{edit}, this number is exactly the multiplicity of the interval $(b,d)$ in the persistence diagram of the filtration $\Ffunc$.
\end{remark}

\begin{remark}(Non-uniqueness of lifespans in multiparameter filtrations)
    Note that, while a cycle can only have a unique lifespan in a 1-parameter filtration, it can exhibit multiple lifespans in a multiparameter filtration, as illustrated in~\cref{fig: 3x3 filtration} and~\cref{ex: multiparameter gpd}.

\end{remark}

\subsection{Monoidal M\"obius Inverses}\label{sec: monoidal mobious inversion}

The ``algebraic" M\"obius inverse of a function $m : P \to \mathcal{M}$, where $\mathcal{M}$ is a commutative monoid, involves the group completion of $\mathcal{M}$, denoted $\kappa(\mathcal{M})$. This is required in order to ``make sense'' of the minus operations that may appear in M\"obius inversion formulas such as the one in~\cref{prop: algebraic mobius inversion formulas}. However, the group completion of a commutative monoid could be the trivial group, yielding a trivial algebraic M\"obius inverse. In particular, for any vector space $V$, the group completion of $\gr(V)$ is the trivial group; see~\cref{appendix:details}. In this case, the algebraic M\"obius inverse of any map $m : P \to \gr(V)$ is the trivial map $ \partial_P (m) : P \to \{ 0 \} = \kappa (\gr(V))$. This suggests considering a notion of M\"obius inverse that does not involve group completion.

\begin{definition}[Monoidal M\"obius inverses]\label{defn: monoidal mobius inversion}
    Let $\mathcal{M}$ be a commutative monoid. Let $m : P \to \mathcal{M}$ be a function, then a function $m' : P \to \mathcal{M}$ is called a~\emph{monoidal M\"obius inverse} of $m$ if it satisfies 
    \[
    \sum_{p'\leq p} m'(p') = m(p)
    \]
    for all $p \in P$. We denote by $\mmi{P}{m}$ the set of all monoidal M\"obius inverses of $m$.
\end{definition}
\nomenclature[23]{$\mmi{P}{\cdot}$}{Set of all monoidal inverses of a function defined on a poset $P$}

Notice that if $\mathcal{G}$ is an abelian group and $g : P \to \mathcal{G}$ is any function, then the algebraic M\"obius inverse of $g$ is a monoidal M\"obius inverse of $g$. Indeed, the algebraic M\"obius inverse of $g$ is the unique monoidal M\"obius inverse of $g$ in this case. However, if $\mathcal{M}$ is a commutative monoid that is not an abelian group and $m : P \to \mathcal{M}$ is a function, then the algebraic M\"obius inverse of $m$ and a monoidal M\"obius inverse of $m$ have different codomains as functions. 

While the algebraic M\"obius inverse of $m$ is always guaranteed to exist when $P$ is finite, a monoidal M\"obius inverse of $m$ might not exist even in this case. Moreover, in the case when both inverses exist,  there might be more than one monoidal M\"obius inverse of $m$ whereas the algebraic M\"obius inverse is necessarily unique. 

\begin{example}[Non-existence] 
    Let $P = \{ 1 < 2< \cdots < n \}$ be the linear poset on $n$ elements. Any function $m : P \to \N$ has an algebraic M\"obius inverse given by 
    \begin{align*}
        \partial_P (m) : P &\to \Z \\
                       1 &\mapsto m(1) \\
                       i &\mapsto m(i)-m(i-1) \text{ for } i>1.
    \end{align*}
   However, $m : P \to \N$ has a monoidal M\"obius inverse if and only if $m$ is non-decreasing.
\end{example}

\begin{example}[Non-uniqueness]\label{ex: nonUniqueMI}
    Let $P = \{ 1 < 2 \}$ and let $\mathcal{M} = \gr(\R^2)$. Let $e_1, e_2 \in \R^2$ denote the standard basis elements of $\R^2$. Consider the function $m : \{ 1<2 \} \to \gr(\R^2)$ given by $m(1) = \spn \{ e_1 \}$, $m(2) = \spn \{ e_1, e_2 \}$. For any angle $\theta \in (0, \pi)$ consider the function $\ell_\theta : \{ 1 < 2 \} \to \gr(\R^2)$ defined by $\ell_\theta(1) = \spn \{ e_1\}$ and $\ell_\theta(2) = \spn \{ \cos (\theta) e_1 + \sin (\theta) e_2 \}$, see~\cref{fig:nonUniqueMI} for a visual illustration. Then, for any $\theta \in (0,\pi)$, $\ell_\theta$ is a monoidal M\"obius inverse of $m$ as it holds that 
    \[
    \sum_{p'\leq p} \ell_\theta (p') = m(p)
    \]
    for all $p \in \{ 1<2\}$.
\end{example}

\begin{figure}
    \centering
    \includegraphics[scale=28]{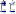}
    \caption{A function $m : \{ 1<2\} \to \gr(\R^2)$ and a one-parameter family of  monoidal M\"obius inverses for it, $\ell_\theta : \{ 1< 2\} \to \gr(\R^2)$ for $\theta\in(0,\pi)$, are depicted.}
    \label{fig:nonUniqueMI}
\end{figure}

The fact that the monoidal M\"obius inverse may not be unique, as demonstrated in~\cref{ex: nonUniqueMI}, motivates the following definition, which introduces an equivalence relation linking all functions serving as M\"obius inverses of the same function.

\begin{definition}[M\"obius equivalence]
    Two functions $m_1, m_2 : P \to \mathcal{M}$ are said to be \emph{M\"obius equivalent} if
    \[
    \sum_{p'\leq p} m_1 (p') = \sum_{p'\leq p} m_2 (p')
    \]
    for all $p\in P$. In this case, we write $m_1 \mobeq m_2$.
\end{definition}
\nomenclature[24]{$\mobeq$}{M\"obius equivalence}

\subsection{A Monoidal Rota's Galois Connection Theorem}

Rota's Galois Connection Theorem (RGCT)~\cite{gal-conn} describes how algebraic
M\"obius inversion behaves when a Galois connection exists between two posets. The functoriality of algebraic M\"obius inversion~\cite{edit, gal-conn} is indeed a direct consequence of the RGCT.
Now, we introduce a monoidal analog of the RGCT. The Monoidal RGCT will allow us to establish the functoriality of our construction in~\cref{sec:generalized orthogonal inversion}.

\begin{theorem}[Monoidal RGCT]\label{thm: monoidal rgct}
    Let $P$ and $Q$ be finite posets, $\ladj{f} : P \leftrightarrows Q : \radj{f}$ be a Galois connection, and $m: P \to \mathcal{M}$ be any function. Assume that $m' : P \to \mathcal{M}$ is a monoidal M\"obius inverse of $m$. Then, $(\ladj{f})_\sharp (m')$ is a monoidal M\"obius inverse of $(\radj{f})^\sharp m$, i.e., $(\ladj{f})_\sharp (m') \in \mmi{Q}{(\radj{f})^\sharp m}$.

\end{theorem}

\begin{proof}
    It suffices to prove that the following equality
    \[
    \sum_{q' \leq q} (\ladj{f})_\sharp \left(m'\right) (q') = (\radj{f})^\sharp m (q)
    \]
    holds for every $q \in Q$. Let $q\in Q$ be any element, starting from the left-hand side, we have
    \begin{align*}
        \sum_{q' \leq q} (\ladj{f})_\sharp \left( m' \right) (q') &= \sum_{q' \leq q} \sum_{p \in \ladj{f}^{-1}(q')} m' (p) \\
        &= \sum_{\substack{p\in P \\ \ladj{f}(p) \leq q}} m' (p) \\
        &= \sum_{\substack{p\in P \\ p \leq \radj{f}(q)}} m' (p) \\
        &= m (\radj{f}(q)) \\
        &= (\radj{f})^\sharp m (q).
    \end{align*}

\end{proof}

\begin{remark}
    Let $\ladj{f} : P \leftrightarrows Q : \radj{f}$ be a Galois connection between two finite posets, and $m: P \to \mathcal{M}$ be any function. Let $(\ladj{f})_\sharp \left( \mmi{P}{m}\right) := \left\{ (\ladj{f})_\sharp(g) \mid g \in \mmi{P}{m} \right\}$. Then,~\cref{thm: monoidal rgct} can be restated as
    \begin{equation}\label{eqn: monoidal rgct - push vs pull}
        (\ladj{f})_\sharp \left(\mmi{P}{m}\right) \subseteq \mmi{Q}{(\radj{f})^\sharp m}.
    \end{equation} 
    Note that the inclusion in~\cref{eqn: monoidal rgct - push vs pull} can be viewed as a generalization of the equality in~\cref{eqn: rgct push vs pull} from~\cref{thm:rgct} in the sense that it extends that result to the monoidal setting.
\end{remark}

\begin{example}[Monoidal RGCT]
    Let $P = \{p_1 < p_2 <p_3\}$, $Q = \{ q_1 <q_2\}$ and $\ladj{f} : P \leftrightarrows Q : \radj{f}$ be as in~\cref{ex:gal conn demo} (which are illustrated in~\cref{fig:RGCTdemo}). Let $m$ be the function defined by
    \begin{align*}
        m : P &\to \gr(\R^3) \\
            p_1 &\mapsto \spn \{e_1 \} \\
            p_2 &\mapsto \spn \{e_1, e_2 \} \\
            p_3 &\mapsto \spn \{e_1, e_2, e_3 \}.
    \end{align*}
    Observe that the function defined by 
    \begin{align*}
        m' : P &\to \gr(\R^3) \\
                                    p_1 &\mapsto \spn \{e_1 \} \\
                                     p_2 &\mapsto \spn \{ e_2 \} \\
                                     p_3 &\mapsto \spn \{e_3 \}.
    \end{align*}
    is a monoidal M\"obius inverse of $m$, i.e. $m' \in \mmi{P}{m}$. Also, the function defined by
    \begin{align*}
        n : Q &\to \gr(\R^3) \\
                                    q_1 &\mapsto \spn \{e_1 \} \\
                                     q_2 &\mapsto \spn \{ e_2+e_1, e_3+e_1 \}
    \end{align*}
    is a monoidal M\"obius inverse of $(\radj{f})^\sharp m$, i.e., $n\in \mmi{Q}{(\radj{f})^\sharp m}$. Observe that $(\ladj{f})_\sharp m' $ and $ n$ do not coincide as functions because
    \[
    (\ladj{f})_\sharp (\radj{f})^\sharp m (q_2) = \spn \{ e_2, e_3 \} \neq \spn \{ e_2+e_1, e_3+e_1 \} = n (q_2).
    \]
    Nevertheless, it holds that 
    \[
    (\ladj{f})_\sharp m \in \mmi{Q}{(\radj{f})^\sharp m},
    \]
    as stated in~\cref{thm: monoidal rgct}.
\end{example}

\section{Orthogonal Inversion and Orthomodular Inversion}\label{sec:generalized orthogonal inversion}

In this section, we introduce the key construction used throughout the paper: Orthogonal Inversion. We also extend this concept by introducing a generalization, which we call Orthomodular Inversion, that applies to order-preserving functions from a finite poset to an orthomodular lattice. The results regarding Orthogonal Inversion are presented in~\cref{subsec: orthogonal inversion section}, while the discussion of Orthomodular Inversion is covered in~\cref{sec: orthomodular inversion sec}.

\subsection{Orthogonal Inversion}\label{subsec: orthogonal inversion section}
In this section, we introduce the notion of \emph{Orthogonal Inversion}. This concept applies to order-preserving functions from a finite poset to a Grassmannian. We utilize Orthogonal Inversion later in~\cref{subsec: gpd for multiparameter} by exploring the applications of this notion to persistence over finite posets, i.e., the case when the underlying poset $P$ of a filtration $\Ffunc : P \to \subcx(K)$ can be any finite poset.

\begin{definition}[Monotone space function]\label{defn: monotone space functions}
    Let $V$ be a finite-dimensional inner product space and $R$ be any finite poset. An order-preserving function $\Ffrak : R\to \gr(V)$ is called a \emph{monotone space function}.
\end{definition}

\begin{definition}[Difference of subspaces]
    Let $V$ be an inner product space and let $W_1, W_2 \subseteq V$ be subspaces. We define the~\emph{difference} of two subspaces as
    \[
    W_1 \ominus W_2 := W_1 \cap W_2^\perp.
    \]
\end{definition}
\nomenclature[27]{$\ominus$}{Difference of subspaces}

\begin{remark}\label{remark: dimension difference}
    In the definition above, if $W_2 \subseteq W_1$, then $W_1 \ominus W_2 = W_1 \cap W_2^\perp$ is the orthogonal complement of $W_2$ inside of $W_1$. In this case, $\dim (W_1 \ominus W_2) = \dim W_1 - \dim W_2$. 
    In general (i.e. when $W_2 \nsubseteq W_1$), we have that 
    \[
    W_1 \ominus W_2 = W_1 \ominus \proj_{W_1} (W_2),
    \]
    where $\proj_{W_1} : V \to W_1$ is the orthogonal projection. This can be informally interpreted as expressing that $\proj_{W_1} (W_2)$ and $W_2$ are treated as being ``quasi-isomorphic'' with respect to $W_1$. 
\end{remark}

\begin{definition}[Orthogonal Inversion]\label{defn: goi}
    Let $R$ be a finite poset and let $\Ffrak: R \to\gr(V)$ be a monotone space function. We define the \emph{Orthogonal Inverse of $\Ffrak$} to be the function $\goi( \Ffrak) : R \to \gr(V)$ given by
    \[
    \goi (\Ffrak) (r) := \Ffrak(r) \ominus \left(\sum_{r'< r} \Ffrak(r')\right).
    \]
\end{definition}
\nomenclature[34]{$\goi$}{Orthogonal Inversion}

\begin{remark}[$\goi$ compresses information]\label{rmk: smaller support of goi}
        For a monotone space function $\Ffrak$, its Orthogonal Inverse $\goi (\Ffrak)$ has a smaller footprint than $\Ffrak$. To be precise, let $\Ffrak : R \to \gr(V)$ be a monotone space function. As $\Ffrak (r') \subseteq \Ffrak(r)$ for every $r'<r\in R$, we have $$\sum_{r'<r}\Ffrak (r') \subseteq \Ffrak(r).$$ Thus, $$\goi (\Ffrak)(r) = \Ffrak(r) \ominus \left( \sum_{r'<r} \Ffrak(r') \right)$$ is the orthogonal complement of $\sum_{r'<r}\Ffrak (r')$ inside $\Ffrak (r)$. In particular, $\goi(\Ffrak)(r) \subseteq \Ffrak(r)$ for every $r\in R$. This can be expressed as: ``$\goi(\Ffrak)$ has a smaller support than $\Ffrak$''. In other words, storing $\goi(\Ffrak)$ requires no more memory than storing $\Ffrak$. Indeed, it  always holds that
        \[
        \sum_{r\in R} \dim \left( \goi \left( \Ffrak \right)\right) \leq \sum_{r\in R} \dim \left( \Ffrak \right)
        \]
        and the equality seemingly takes place only in pathological cases such as when the underlying poset is an antichain or when $\Ffrak$ is identically zero. 
\end{remark}

In the following proposition, we show that the Orthogonal Inverse of a monotone space function produces a monoidal M\"obius inverse.

\begin{proposition}\label{prop: goi is monoidal mi}
    Let $R$ be a finite poset and let $\Ffrak : R \to \gr(V)$ be a monotone space function. Then, $\goi (\Ffrak)$ is a monoidal M\"obius inverse of $\Ffrak$, i.e., $\goi (\Ffrak) \in \mmi{R}{\Ffrak}$. That is,
    \[
    \sum_{r' \leq r} \goi (\Ffrak)(r') = \Ffrak(r)
    \]
    for every $r\in R$.
\end{proposition}

\begin{proof}
    Let $R$ be a finite poset and let $\Ffrak : R \to \gr(V)$ be a monotone space function. 
    We will proceed by strong induction on the poset $R$.

  \medskip
\noindent  
    \noindent \underline{Base cases:} Let $0_R$ be a minimal element of $R$ (note that there could be more than one minimal element of $R$). By~\cref{defn: goi}, $\goi (\Ffrak)\left(0_R\right) = \Ffrak \left(0_R\right)$. As $0_R$ is a minimal element of $R$, it follows that
    $$\sum_{\substack{r' \in R \\ r' \leq R}} \goi (\Ffrak) (r') = \Ffrak (r).$$
    Thus, we have the result for the base case. 
    
   \medskip
\noindent
    \noindent \underline{Inductive step:} Let $r \in R$ be any element and assume that for every $s < r$ it holds that
    $$\sum_{\substack{s' \in R \\ s' \leq s}} \goi (\Ffrak) (s') = \Ffrak (s).$$
    Then, due to the fact that $A + B = A + B + B$ for any $A, B \in \gr(V)$, it follows that
    \begin{align*}
        \sum_{\substack{r' \in R \\ r' \leq r}} \goi (\Ffrak) (r') &= \goi (\Ffrak) (r) + \sum_{\substack{r' \in R \\ r' < r}} \goi (\Ffrak) (r')  \\
        &= \goi (\Ffrak) (r) + \sum_{\substack{s \in R \\ s < r}} \sum_{s'\leq s} \goi (\Ffrak) (s') \\
        &= \goi (\Ffrak) (r) + \sum_{\substack{s \in R \\ s < r}} \Ffrak (s)
    \end{align*}
    By~\cref{defn: goi}, we have
    \[
    \goi (\Ffrak) (r)  =  \Ffrak(r) \ominus \sum_{\substack{s \in R \\ s < r}} \Ffrak (s) 
    \]
    Then, it follows that
    \begin{align*}
    \sum_{\substack{r' \in R \\ r' \leq r}} \goi (\Ffrak) (r') &= \goi (\Ffrak) (r) + \sum_{\substack{s \in R \\ s < r}} \Ffrak (s) \\
        &= \left ( \Ffrak(r) \ominus \sum_{\substack{s \in R \\ s < r}} \Ffrak (s) \right ) + \sum_{\substack{s \in R \\ s < r}} \Ffrak (s) \\
        &= \Ffrak (r)
    \end{align*}
    Thus, $\goi (\Ffrak)$ is a monoidal M\"obius inverse of $\Ffrak$.  
\end{proof}

The proposition above implies that any monotone space function $\Ffrak$ can be reconstructed from its Orthogonal Inverse, $\goi(\Ffrak)$. Additionally, considering that $\goi(\Ffrak)$ has a smaller support than $\Ffrak$, as detailed in~\cref{rmk: smaller support of goi}, one can interpret $\goi$ as serving as a form of lossless compression for monotone space functions.

\begin{example}[Orthogonal Inversion]
    Let $R = \{ r_1, r_2, r_3, r_4 \}$ be the poset given by the relations $r_1 < r_2$, $r_1 < r_3$, $r_2 < r_4$, and $r_3 < r_4$ as depicted in~\cref{fig: 4element poset} on the left. Let $V = \R^3$ and let $\{e_1, e_2, e_3 \}$ be the standard orthonormal basis of $\R^3$. Consider the monotone space function depicted in~\cref{fig: 4element poset} in the middle, defined as follows
    \begin{align*}
        \Ffrak : R &\to \gr\left(\R^3\right) \\
                 r_1 &\mapsto \spn \{e_1 \} \\
                 r_2 &\mapsto \spn \{e_1, e_2 \} \\
                 r_3 &\mapsto \spn \{e_1, e_2 \} \\
                 r_4 &\mapsto \R^3.
    \end{align*}    
    The Orthogonal Inverse of $\Ffrak$ is given by
    \begin{align*}
        \goi(\Ffrak) : R &\to \gr\left(\R^3\right) \\
                 r_1 &\mapsto \spn \{e_1 \} \\
                 r_2 &\mapsto \spn \{e_2 \} \\
                 r_3 &\mapsto \spn \{e_2 \} \\
                 r_4 &\mapsto \spn \{e_3 \},
    \end{align*}
    as illustrated in~\cref{fig: 4element poset} on the right. Observe that although both $\Ffrak$ and $\goi (\Ffrak)$ are supported everywhere in $P$, we have that $\goi (\Ffrak)(r) \subsetneq \Ffrak(r)$ for $r\in \{r_2, r_3, r_4 \}$. 
    Note that the family $\{ \goi(\Ffrak)(r) \}_{r\in R}$ is \emph{not} a transversal family as $\goi(\Ffrak)(r_2) = \goi(\Ffrak)(r_3) = \spn \{e_2 \}$.
\end{example}

\begin{figure}[t]
    \centering
    \includegraphics[scale=22]{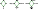}
    \caption{Poset R, order-preserving space function $\Ffrak$ and its Orthogonal Inverse $\goi(\Ffrak)$}
    \label{fig: 4element poset}
\end{figure}

As demonstrated in the example above, the Orthogonal Inverse of a monotone space function may not necessarily yield a transversal family. Nevertheless, the Orthogonal Inverse of monotone space functions still adheres to a weaker notion of transversality, which we will now describe in the following.

\begin{definition}[Downward transversal functions]
    A function $\Mfrak : R \to \gr(V)$ is called a \emph{downward transversal function} if the family $$\left\{ \sum_{r'<r} \Mfrak(r'), \Mfrak(r) \right\}$$ is a transversal family for every $r\in R$.
\end{definition}

\begin{proposition}\label{prop: goi gives downward transversel}
    Let $\Ffrak : R \to \gr(V)$ be a monotone space function. Then, $\goi (\Ffrak)$ is a downward transversal function.
\end{proposition}

\begin{proof}
    Let $r \in R$ be any element. By~\cref{defn: goi}, we have that $\goi (\Ffrak) (r) = \Ffrak (r) \ominus \sum_{r' < r} \Ffrak(r')$. Due to the fact that $A + B = A + B + B$ for any $A, B \in \gr(V)$ and by~\cref{prop: goi is monoidal mi}, it follows that
    \[
    \sum_{r'<r} \goi (\Ffrak)(r') = \sum_{r'<r}\sum_{s\leq r'} \goi (\Ffrak)(s) = \sum_{r' < r} \Ffrak(r').
    \]
    Therefore, we obtain that
    \[
    \goi(\Ffrak)(r) = \Ffrak(r) \ominus \sum_{r'<r}\Ffrak(r') = \Ffrak(r) \ominus \sum_{r'<r} \goi(\Ffrak)(r').
    \]
    Hence, the subspaces $\goi(\Ffrak)(r)$ and $\sum_{r'<r}\goi(\Ffrak)(r')$ are orthogonal to each other. Thus, the family
    \[
    \left\{\goi(\Ffrak)(r),  \sum_{r'<r}\goi(\Ffrak)(r')\right\}
    \]
    is a transversal family. So, we conclude that $\goi (\Ffrak)$ is a downward transversal function.
\end{proof}

We now proceed by organizing the collection of monotone space functions into a category, and similarly, we organize the collection of downward transversal functions into another category. We then establish that the Orthogonal Inversion operates as a functor between these categories. As a result of this functoriality, we also obtain edit distance stability.

\begin{definition}[Category of monotone space functions]
    We define $\mon(V)$ to be the category where
    \begin{itemize}
        \item Objects are monotone space functions defined over any finite metric poset,
        \item A morphism from a monotone space function $\Ffrak : R \to \gr(V)$ to another monotone space function $\Gfrak : Q\to \gr(V)$ is given by a Galois connection $\ladj{f} : R \leftrightarrows Q : \radj{f}$ such that $\Ffrak \circ \radj{f} = \Gfrak$.
    \end{itemize}
\end{definition}
\nomenclature[35]{$\mon(\cdot)$}{Category of monotone space functions}

\begin{definition}[Cost of a morphism in $\mon(V)$]
    The cost of a morphism in $\mon(V)$ is given by $\dis(\ladj{f})$, the distortion of the left adjoint.
\end{definition}

\begin{definition}[Category of downward transversal functions]
    We define $\dtfnc(V)$ to be the category where
    \begin{itemize}
        \item Objects are downward transversal functions defined over any finite metric poset,
        \item A morphism from an object $\Mfrak : R \to \gr(V)$ to another object $\Nfrak : Q \to \gr(V)$ is given by a Galois connection $\ladj{f} : R \leftrightarrows Q : \radj{f}$ such that $\left(\overline{\ladj{f}}\right)_\sharp \left(\Mfrak \right) \mobeq \Nfrak$.
    \end{itemize}   
\end{definition}
\nomenclature[36]{$\dtfnc(\cdot)$}{Category of downward transversal functions}

\begin{definition}[Cost of a morphism in $\dtfnc(V)$]
        The cost of a morphism in $\dtfnc(V)$ is given by $\dis(\ladj{f})$, the distortion of the left adjoint. 
\end{definition}

As a result of the fact that $\goi$ produces a monoidal M\"obius inverse,~\cref{prop: goi is monoidal mi}, and by the monoidal RGCT,~\cref{thm: monoidal rgct}, we obtain the following.

\begin{theorem}[Functoriality and stability of $\goi$]\label{thm: goi functor and stable}
    The assignment 
    \begin{align*}
        \goi : \mon(V) &\to \dtfnc(V) \\
                   \Ffrak &\mapsto \goi (\Ffrak) 
    \end{align*}
    is a functor. Moreover, for any two objects $\Ffrak$ and $\Gfrak$ in $\mon(V)$, we have
    \[
    d_{\dtfnc(V)}^E (\goi (\Ffrak), \goi (\Gfrak)) \leq d_{\mon(V)}^E (\Ffrak, \Gfrak).
    \]
\end{theorem}

\begin{proof}
    Let $ \Ffrak : R \to \gr(V)$ be an object in $\mon(V)$. By~\cref{prop: goi gives downward transversel}, we have that $\goi (\Ffrak)$ is an object in $\dtfnc(V)$. Now, let $\Gfrak : S \to \gr(V)$ be another object in $\mon(V)$ and let $\ladj{f} : R \leftrightarrows S : \radj{f}$ a Galois connection that determines a morphism from $\Ffrak$ to $\Gfrak$. This means that $ \left(\overline{\radj{f}}\right)^\sharp \Ffrak=  \Ffrak \circ \overline{\radj{f}} = \Gfrak$. By~\cref{prop: goi is monoidal mi}, we have that $\goi (\Ffrak)$ and $\goi (\Gfrak)$ are monoidal M\"obius inverses of $\Ffrak$ and $\Gfrak$ respectively. Then, by the monoidal RGCT,~\cref{thm: monoidal rgct}, we have that 
    \[
    \left(\overline{\ladj{f}}\right)_\sharp \goi (\Ffrak) \mobeq \goi (\Gfrak).
    \]
    Therefore, the Galois connection $(\ladj{f}, \radj{f})$ is a morphism from $\goi (\Ffrak)$ to $\goi (\Gfrak)$. Thus $\goi$ is a functor from $\mon(V)$ to $\dtfnc(V)$. The edit distance stability
    \[
    d_{\dtfnc(V)}^E (\goi (\Ffrak), \goi (\Gfrak)) \leq d_{\mon(V)}^E (\Ffrak, \Gfrak)
    \]
    follows from the fact that $\goi$ is a functor from $\mon(V)$ to $\dtfnc(V)$.
\end{proof}

In the following, we prove that we not only achieve stability but also obtain isometry.

\begin{theorem}[Isometry]\label{thm: isometry of goi}
    For any two objects $\Ffrak$ and $\Gfrak$ in $\mon(V)$, we have
    \[
    d_{\dtfnc(V)}^E (\goi (\Ffrak), \goi (\Gfrak)) = d_{\mon(V)}^E (\Ffrak, \Gfrak).
    \]
\end{theorem}

\begin{proof}
    We have the inequality
    \[
    d_{\dtfnc(V)}^E (\goi (\Ffrak), \goi (\Gfrak)) \leq d_{\mon(V)}^E (\Ffrak, \Gfrak)
    \]
    by~\cref{thm: goi functor and stable}. Thus, it suffices to prove the following inequality
    \[
    d_{\dtfnc(V)}^E (\goi (\Ffrak), \goi (\Gfrak)) \geq d_{\mon(V)}^E (\Ffrak, \Gfrak).
    \]
    We prove the above inequality by constructing a functor $ \lowersum : \dtfnc(V) \to \mon(V)$ that repects the cost of morphisms. For any $\Mfrak : R \to \gr(V)$ downward transversal function in $\dtfnc(V)$, let 
    \[
    \lowersum (\Mfrak) (r) := \sum_{r'\leq r} \Mfrak(r').
    \]
    For any $r'\leq r \in R$, $\lowersum (\Mfrak) (r') \subseteq \lowersum (\Mfrak) (r)$. Thus, $\lowersum (\Mfrak) : R \to \gr(V)$ is an order-preserving space function, i.e., $\lowersum (\Mfrak)$ is in $\mon(V)$. To see that $\lowersum$ is a functor, let $\Mfrak : R \to \gr(V)$ and $\Nfrak : S \to \gr(V)$ be two downward transversal functions and let $\ladj{f} : R \leftrightarrows S : \radj{f}$ be a Galois connection that determines a morphism from $\Mfrak$ to $\Nfrak$. This means that
    \[
    \left(\overline{\ladj{f}}\right)_\sharp \left(\Mfrak \right) \mobeq \Nfrak.
    \]
    That is, for any $s\in S$, we have
    \begin{align*}
        \sum_{s'\leq s}\Nfrak (s') &= \sum_{s'\leq s} \left(\overline{\ladj{f}}\right)_\sharp \left(\Mfrak \right) (s') \\
        &= \sum_{s'\leq s} \sum_{\substack{r\in R \\ \ladj{f}(r) = s'}} \Mfrak(r) \\
        &= \sum_{\substack{r\in R \\ \ladj{f}(r)\leq s}} \Mfrak (r) \\
        &= \sum_{\substack{r\in R \\ r\leq \radj{f}(s)}} \Mfrak (r).
    \end{align*}
    Thus, we obtained that 
    \begin{align*}
        \lowersum (\Nfrak)(s) &= \sum_{s'\leq s}\Nfrak (s') \\
        &= \sum_{\substack{r\in R \\ r\leq \radj{f}(s)}} \Mfrak (r) \\
        &= \lowersum (\Mfrak)(\radj{f}(s)).
    \end{align*}
    That is, $\lowersum (\Nfrak) = \left( \radj{f} \right)^\sharp (\lowersum(\Mfrak))$, i.e., the Galois connection $\ladj{f} : R \leftrightarrows S : \radj{f}$ is a morphism from $\lowersum(\Mfrak)$ to $\lowersum(\Nfrak)$. Therefore, $\lowersum : \dtfnc(V) \to \mon(V)$ is a functor. 

    Now, let $\Ffrak$ and $\Gfrak$ be two order-preserving space functions. Obserbe that, by~\cref{prop: goi is monoidal mi}, we have that $\lowersum (\goi (\Ffrak)) = \Ffrak$ and $\lowersum (\goi (\Gfrak)) = \Gfrak$. Therefore, a path between $\goi(\Ffrak)$ and $\goi (\Gfrak)$ in $\dtfnc(V)$ induces a path between $\Ffrak$ and $\Gfrak$ in $\mon(V)$ with the same cost. Thus, we obtain
    \[
    d_{\dtfnc(V)}^E (\goi (\Ffrak), \goi (\Gfrak)) \geq d_{\mon(V)}^E (\Ffrak, \Gfrak).
    \]

\end{proof}

\subsection{Orthomodular Inversion}\label{sec: orthomodular inversion sec}
In this section, we replace Grassmannians with  more general objects: \emph{orthomodular lattices}; see~\cref{defn: orthomodular lattices}. We prove that order-preserving functions from a finite poset to an orthomodular lattice can be M\"obius inverted through a construction that generalizes Orthogonal Inversion.

A tuple $(L, \leq, \vee, \wedge)$ is called a \emph{lattice} if $(L,\leq)$ is a poset and every two elements $a,b\in L$
has a least upper bound, denoted $a\vee b$, and a greatest lower bound, denoted $a \wedge b$. A tuple $(L, \leq, \vee, \wedge, 0_L, 1_L)$ is called a \emph{bounded lattice} if $L$ has smallest and greatest elements, denoted $0_L$ and $1_L$ respectively. 

\begin{definition}[Orthomodular lattices~{\cite[page 53]{Birkhoff1940}}]\label{defn: orthomodular lattices}
    A tuple $(L, \wedge, \vee, 0_L, 1_L, (\cdot)^\perp)$ is called an~\emph{orthomodular lattice} if $(L, \wedge, \vee, 0_L, 1_L,)$ is a bounded lattice and $(\cdot)^\perp : L \to L$ is function, called \emph{orthocomplementation}, such that the following conditions hold for all $a,b\in L$:
    \begin{itemize}
        \item If $a\leq b$, then $b^\perp \leq a^\perp$,
        \item $a \vee a^\perp = 1_L$, $a\wedge a^\perp = 0_L$, $(a^\perp)^\perp = a$,
        \item If $a\leq b$, then $b = a \vee (a^\perp \wedge b)$.
    \end{itemize}
\end{definition}

Observe that the Grassmannian of a finite-dimensional inner product space $V$, $\gr(V)$, is an orthomodular lattice where:
\begin{itemize}
\item[--] $\vee$ is taken as the sum of subspaces,
\item[--] $\wedge$ is taken as the intersection of subspaces, 
\item[--] $0_{\gr(V)} = \{ 0 \}$, 
\item[--] $1_{\gr(V)} = V$, and 
\item[--] $(\cdot)^\perp$ is taken as the orthogonal complement operation. 
\end{itemize}

\begin{remark} 
Note that the two structures on Grassmannians that we have been utilizing in this paper, the monoid structure (e.g. \cref{prop: goi is monoidal mi}) and the poset structure (e.g. \cref{defn: monotone space functions}), are unified when a Grassmannian is viewed as an orthomodular lattice.  The notion of orthomodular lattice was first studied by Birkhoff and Von Neumann in~\cite[Section 11]{Birkhoff1936} to formalize a logic system for quantum mechanics. Building upon this rich mathematical landscape, our paper extends the application of orthomodular lattices to the theory of persistence.
\end{remark}

We now describe a notion of M\"obius inversion for lattice-valued functions. The following definition closely resembles the classical/monoidal Möbius inversion, with the sole modification being the replacement of the group (or monoid) operation ``$+$'' with the join operation ``$\vee$'' of the lattice.

\begin{definition}[Join M\"obius inversion]\label{defn: join mi}
    Let $R$ be poset and $L$ be a lattice, and $\Ffrak : R \to L$ be an order-preserving function. A function $\Ffrak' : R \to L$ is called a~\emph{join M\"obius inverse} of $\Ffrak$ if it satisfies
    \[
    \Ffrak(r) = \bigvee_{r'\leq r} \Ffrak'(r')
    \]
    for every $r \in R$.
\end{definition}

The following proposition establishes that the Rota's Galois Connection Theorem still holds for functions valued in lattices.

\begin{proposition}[Join RGCT]\label{prop: join rgct}
    Let $R$ and $Q$ be finite posets and $\ladj{f} : R \leftrightarrows Q : \radj{f}$ be a Galois connection. Let $L$ be a lattice, $\Ffrak : R \to L$ be an order-preserving map, and $\Ffrak' : R \to L$ be a join M\"obius inverse of $\Ffrak$. Then, the function
    \begin{align*}
        \left( \ladj{f}\right)_\sharp(\Ffrak') : Q &\to L \\
                                            q &\mapsto \bigvee_{\substack{r\in R \\ \ladj{f}(r)=q}} \Ffrak'(r)
    \end{align*}        
    is a join M\"obius inverse of $\left(\radj{f} \right)^\sharp(\Ffrak) = \Ffrak\circ \radj{f}$.
\end{proposition}

\begin{proof}
    It suffices to prove the following equality
    \[
    \bigvee_{q' \leq q} (\ladj{f})_\sharp (\Ffrak') (q') = \Ffrak \left((\radj{f}(q)\right)
    \]
    for every $q \in Q$. Let $q\in Q$ be any element, then, we have
    \begin{align*}
    \bigvee_{q' \leq q} (\ladj{f})_\sharp (\Ffrak') (q') &= \bigvee_{q' \leq q} \bigvee_{r \in \ladj{f}^{-1}(q')} \Ffrak' (r) \\
    &= \bigvee_{\substack{r\in R \\ \ladj{f}(r) \leq q}} \Ffrak' (r) \\
    &= \bigvee_{\substack{r\in R \\ r \leq \radj{f}(q)}} \Ffrak' (r) \\
    &= \Ffrak(\radj{f}(q)).
    \end{align*}
\end{proof}

For order-preserving functions valued in an orthomodular lattice, it is always possible to obtain a join M\"obius inverse. We will now describe this procedure and prove that it indeed yields  a join M\"obius inverse.

\begin{definition}[Orthomodular Inversion]\label{defn: modular inv}
    Let $R$ be a finite poset and $L$ be an orthomodular lattice. Let $\Ffrak: R \to L$ be an order-preserving function. We define the~\emph{Orthomodular Inverse} of $\Ffrak$ to be the function defined by
    \begin{align*}
        \partial^\perp_R (\Ffrak) : R &\to L \\
                    r &\mapsto \Ffrak(r) \wedge \left(\bigvee_{r'< r} \Ffrak(r')\right)^\perp.    
    \end{align*}
\end{definition}
\nomenclature[37]{$\partial_R^\perp(\cdot)$}{Orthomodular inversion of a function defined on a poset $R$}

\begin{proposition}[Orthomodular Inversion is a join M\"obius inversion]\label{prop: perp inv is join mobius inv}
    Let $R$ be a finite poset and $L$ be an orthomodular lattice. Let $\Ffrak: R\to L$ be an order-preserving function. Then, the Orthomodular Inverse of $\Ffrak$, $\partial^\perp_R (\Ffrak) : R\to L$, is a join M\"obius inverse of $\Ffrak$.
\end{proposition}

\begin{proof}
    We will proceed by strong induction on the poset $R$.
    
 \medskip
\noindent
    \noindent \underline{Base cases:} Let $0_R$ be a minimal element of $R$ (note that there could be more than one minimal element of $R$). By~\cref{defn: modular inv}, $\partial^\perp_R (\Ffrak)\left(0_R\right) = \Ffrak \left(0_R\right)$. As $0_R$ is a minimal element of $R$, it follows that
    $$\bigvee_{\substack{r' \in R \\ r' \leq r}} \partial^\perp_R (\Ffrak) (r') = \Ffrak (r).$$
    Thus, we have the result for the base case. 
    
 \medskip
\noindent
    \noindent \underline{Inductive step:} Let $r \in R$ be any element and assume that for every $s < r$ it holds that
    $$\bigvee_{\substack{s' \in R \\ s' \leq s}} \partial^\perp_R (\Ffrak) (s') = \Ffrak (s).$$
    Then, due to the fact that $a\vee b = a\vee b\vee b$ for any $a,b \in L$, it follows that
    \begin{align*}
        \bigvee_{\substack{r' \in R \\ r' \leq r}} \partial^\perp_R (\Ffrak) (r') &= \partial^\perp_R (\Ffrak) (r) \vee \left( \bigvee_{\substack{r' \in R \\ r' < r}} \partial^\perp_R (\Ffrak) (r') \right)  \\
        &= \partial^\perp_R (\Ffrak) (r) \vee \left( \bigvee_{\substack{s \in R \\ s < r}} \bigvee_{s'\leq s} \partial^\perp_R (\Ffrak) (s') \right) \\
        &= \partial^\perp_R (\Ffrak) (r) \vee \left( \bigvee_{\substack{s \in R \\ s < r}} \Ffrak (s) \right).
    \end{align*}
    By~\cref{defn: modular inv}, we have
    \[
    \partial^\perp_R (\Ffrak) (r)  =  \Ffrak(r) \wedge \left( \bigvee_{\substack{s \in R \\ s < r}} \Ffrak (s) \right)^\perp
    \]
    Then, it follows that
    \begin{align*}
    \bigvee_{\substack{r' \in R \\ r' \leq r}} \partial^\perp_R (\Ffrak) (r') &= \partial^\perp_R (\Ffrak) (r) \vee \left( \bigvee_{\substack{s \in R \\ s < r}} \Ffrak (s) \right) \\
        &= \left ( \Ffrak(r) \wedge \left( \bigvee_{\substack{s \in R \\ s < r}} \Ffrak (s)\right)^\perp \right ) \vee \bigvee_{\substack{s \in R \\ s < r}} \Ffrak (s) \\
        &= \Ffrak (r)
    \end{align*}
    Thus, $\partial^\perp_R (\Ffrak)$ is a join M\"obius inverse of $\Ffrak$.  
\end{proof}

\begin{remark}
 Orthomodular Inversion (\cref{defn: modular inv}) generalizes Orthogonal Inversion (\cref{defn: goi}). Indeed, they agree when applied to an order-preserving function $\Ffrak: R \to \gr(V)$. 
\end{remark}

\section{Grassmannian Persistence Diagrams}
\label{subsec: gpd for multiparameter}

In this section, we apply our Orthogonal Inversion framework from \cref{sec:generalized orthogonal inversion} to analyze simplicial filtrations over finite posets. 

Let $K$ be a finite simplicial complex. Recall from~\cref{parag: simplicial filtration} that a simplicial filtration of $K$ is an order-preserving function, $F : P \to \subcx(K)$, where $P$ is a finite poset and $\subcx(K)$ is the poset of subcomplexes of $K$. Note that we do not require $P$ to be linearly ordered. To indicate this, we refer to filtrations $\Ffunc : P \to \subcx(K)$ as \emph{filtrations over general posets}. Observe that, for any degree $\dgr\geq 0$, the $\dgr$-th birth-death spaces associated to a filtration $\Ffunc : P \to \subcx(K)$, $\ZB_\dgr^\Ffunc : \overline{P}^\times \to \gr\left( C_\dgr^K \right)$, is a monotone space function, where $\overline{P}^\times= (\Seg(P),\prodord)$ is the poset of segments of $P$ with the product order. This motivates the following definition.

\begin{definition}[Grassmannian persistence diagrams of filtrations over a general posets]\label{defn: gpd of multiparameter filtrations}
    Let $P$ be any finite poset and let $\Ffunc : P\to \subcx(K)$ be a filtration. For any degree $\dgr\geq 0$, we define the \emph{degree-$\dgr$ Grassmannian persistence diagram} of $\Ffunc$ as the Orthogonal Inverse of $\ZB_\dgr^\Ffunc$, i.e., $$\goi \left(\ZB_\dgr^\Ffunc\right) : \Seg(P) \to \gr\left( C_\dgr^K \right).$$
\end{definition}

\begin{figure}[t]
    \centering
    \includegraphics[scale=15]{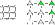}
    \caption{On the left is the product poset $\{ 1<2<3 \} \times \{1<2<3 \}$, refered as the $3 \times 3$ grid. The ordering is given by the product order, i.e., $(i,j) \leq (k,l) \iff i \leq k$ and $ j\leq l$. On the right is a multiparameter filtration over the $3\times 3$ grid.}
    \label{fig: 3x3 filtration}
\end{figure}

\begin{example}\label{ex: multiparameter gpd}
    Let $P = \{1<2<3 \}\times \{1<2<3 \}$ be the $3$ by $3$ grid with the product order, as depicted in~\cref{fig: 3x3 filtration}. Let $\Ffunc$ be the filtration over $P$ depicted in~\cref{fig: 3x3 filtration}. For $\dgr=1$, $\goi \left(\ZB_1^\Ffunc\right) : \Seg(P) \to \gr\left(C_1^K\right)$ is given by
    \[
        \goi \left(\ZB_1^\Ffunc\right)(I) =\begin{cases}
            \spn \{ ab - ac + bc \} & \text{ if } I \in \left\{ \begin{multlined}
                \left((1,2), (2,3)\right), \left((1,2), (3,2)\right), \vspace{-1em} \\  \left((2,1), (2,3)\right), \left((2,1), (3,2)\right)
            \end{multlined} \right\}\\
            \{ 0 \} &\text { otherwise.}
        \end{cases}
    \]

    Note that the Grassmannian persistence diagram captures the birth and death of some obvious degree 1 cycles in the filtration. This phenomenon which will be extensively explored next.   Note also that $\goi \left(\ZB_1^\Ffunc\right)$ is supported on four segments, whereas $\ZB_1^\Ffunc$ has a support of size $16$. This indicates that $\goi \left(\ZB_1^\Ffunc\right)$ has significantly smaller support compared to $\ZB_1^\Ffunc$ as discussed in~\cref{rmk: smaller support of goi}.
\end{example}

\begin{remark}[Computational complexity for $2$-parameter filtrations]\label{rem: computability of 2-parameter}
In \cite[Appendix C]{1parameterGrassmannian}, we describe an algorithm for computing Grassmannian persistence diagrams in the one-parameter setting and provided an analysis of its time complexity. The algorithm operates in a brute-force manner by computing the Grassmannian persistence diagram for all segments. This algorithm, together with its complexity analysis, can be naturally extended to $2$-parameter (and even $n$-parameter) filtrations which we illustrate below.

Specifically, let
\[
\Ffunc \colon P := \{1 < 2 < \cdots < m\} \times \{1 < 2 < \cdots < m\} \;\to\; \subcx(K)
\]
be a $2$-parameter filtration. Then in a brute-force manner, we compute $\goi \bigl(\ZB_\dgr^\Ffunc\bigr)$ for each of the $O(m^4)$ segments in the $m \times m$ grid as follows: For each segment $((i,j),(k,l)) \in \Seg(P)$, we construct a basis for the space:
        \begin{align*}
            \goi \left( \ZB_\dgr^\Ffunc \right)((i,j),(k,l)) &= \ZB_\dgr^\Ffunc ((i,j),(k,l)) \ominus \left(\sum_{I<_\times ((i,j),(k,l))} \ZB_\dgr^\Ffunc(I) \right) \\
            &=  \\
            \ZB_\dgr^\Ffunc ((i,j),(k,l)) &\ominus \left( \begin{multlined} \ZB_\dgr^\Ffunc ((i-1,j),(k,l)) + \ZB_\dgr^\Ffunc ((i,j-1),(k,l)) \\ + \ZB_\dgr^\Ffunc ((i,j),(k-1,l)) + \ZB_\dgr^\Ffunc ((i,j),(k,l-1)) \end{multlined}\right).
            \end{align*}
This computation boils down to computing bases for subspaces obtained through intersection, sum as well as the $\ominus$ operation. All these linear algebra operations have been analyzed in \cite[Appendix C]{1parameterGrassmannian} (see \cite[Lemmas C.2, C.3 and C.4]{1parameterGrassmannian}) and the net complexity of the above computation is
        $$O\left(  n_\dgr^K \cdot n_{\dgr-1}^K \cdot \min\left(n_\dgr^K, n_{\dgr-1}^K\right) + n_{\dgr+1}^K \cdot n_{\dgr}^K \cdot \min\left(n_{\dgr+1}^K, n_{\dgr}^K\right) +    \left(n_\dgr^K\right)^3 \right),   $$
where, for each degree $\rho,$ $n_\dgr^K$ denotes the number of $\dgr$-simplices in $K$. 

Hence, the total time complexity of the algorithm described above for computing the degree-$\dgr$ Grassmannian persistence diagram of $\Ffunc$ is
\begin{equation}\label{eq: complexity}
O\!\Bigl( m^4  \,\Bigl( 
n_\dgr^K\,n_{\dgr-1}^K\,\min\bigl(n_\dgr^K,\,n_{\dgr-1}^K\bigr)\;+\; 
n_{\dgr+1}^K\,n_\dgr^K\,\min\bigl(n_{\dgr+1}^K,\,n_\dgr^K\bigr)\;+\; 
\bigl(n_\dgr^K\bigr)^3 
\Bigr)\Bigr).
\end{equation}
Treating the quantities $n_{\dgr-1}^K$, $n_{\dgr}^K$, and $n_{\dgr+1}^K$ as constants we see that the complexity of this procedure is $O(m^4)$, which matches the complexity of computing rank-induced persistence diagrams via a brute-force algorithm~\cite{botnan2022rectangle}; see also \cite{morozov-patel} for an output-sensitive algorithm for computing birth-death induced persistence diagrams of 2-parameter filtrations.
\end{remark}

In the remainder of this section, we demonstrate that the Grassmannian persistence diagram of a filtration over a general poset (1) offers a clear and interpretable description of the evolution of topological features; (2) is stable with respect to a specific edit distance, which we define below; and (3) subsumes and strictly strengthens the information captured by birth-death induced signed persistence diagrams.

\subsection{Interpretability of Grassmannian Persistence Diagrams: Canonical Cycles and Homological Exhaustiveness}\label{subsec: inter of gpd canonical cyyles and exhaustiveness}

As observed in~\cref{ex: multiparameter gpd}, for any segment $(b,d) \in \Seg(P)$, the cycles determined by $\goi \left(\ZB_\dgr^\Ffunc \right) ((b,d))$ are born exactly at $b$ and die exactly at $d$. We now show that this is not a coincendence, but rather a fact.

\begin{theorem}[$\goi\left(\ZB_\dgr^\Ffunc\right)$ produces fully supported canonical cycles]\label{prop: born and dead multiparameter}
    Let $\Ffunc : P \to \subcx(K)$ be a filtration. Let $(b,d)\in \Seg(P)$ and let $z\in \goi \left( \ZB_\dgr^\Ffunc\right) ((b,d)) $ be a nonzero cycle. Then, $z$ is born precisely at $b$ and dies precisely at $d$ (see \cref{def:bd-cycles}).
\end{theorem}

\begin{proof}

Let $z \in \goi\left( \ZB_\dgr^\Ffunc \right)((b, d))$ be a nonzero cycle. By~\cref{defn: goi}, we have that 
\begin{align*}
    \goi\left( \ZB_\dgr^\Ffunc \right)((b, d)) &= \ZB_\dgr^\Ffunc ((b,d)) \ominus  W_\dgr^\Ffunc((b,d))  \\
    &= \ZB_\dgr^\Ffunc ((b,d)) \cap \left( W_\dgr^\Ffunc((b,d)) \right)^\perp.
\end{align*}
As $z$ is nonzero, we conclude that $z\notin W_\dgr^\Ffunc((b,d)) = \sum_{(a,c)<_\times (b,d)} \ZB_\dgr^\Ffunc ((a,c))$ and $z \in \ZB_\dgr^\Ffunc ((b,d))$. Therefore, $z$ is born at $b$ and dies at $d$.
\end{proof}

\subsubsection{Canonicality}

Note that, according to \cref{defn: goi,defn: gpd of multiparameter filtrations}, $\goi$ provides an assignment of intervals to  subspaces of $C_\dgr^K$ that does not depend on any superfluous choices. As such we regard this assignment as \emph{canonical}.

Indeed,  let $K$ be a simplicial complex over a vertex set $V = \{ v_1,\ldots v_m \}$ and $\xi(K)$ be the simplicial complex obtained from $K$ by relabelling (i.e. permuting) its vertices. That is, $\xi(K)$ is a simplicial complex over a vertex set $W = \{ w_1, \ldots w_m \}$ such that there exists a permutation $\xi : \{1,\ldots,m \} \to \{ 1,\ldots,m\}$ with 
\[
\{ v_{i_0},\ldots,v_{i_\dgr} \} \in K \iff \{ w_{\xi\left(i_0\right)},\ldots, w_{\xi\left(i_\dgr\right)} \} \in \xi(K).
\]
Let $\Ffunc : P \to \subcx(K)$ be a filtration filtration of $K$. Then, $\xi$ determines a filtration $\xi(\Ffunc) : P \to \subcx(\xi(K))$ given by
\[
\{ v_{i_0} ,\ldots ,v_{i_\dgr} \} \in \Ffunc(a) \iff \{ w_{\xi\left(i_0\right)},\ldots w_{\xi\left(i_\dgr\right)} \} \in \xi(\Ffunc)(a),
\]
for any $a\in P$.
In the following proposition, we show that for each segment $(b, d) \in \Seg(P)$, the subspaces of $C_\dgr^K$ and $C_\dgr^{\xi(K)}$ determined by $\goi$ are mapped to each other through the  \emph{canonical isomorphism} given by 
\begin{align*}
    \Omega_\dgr^{\xi} : C_\dgr^K &\to C_\dgr^{\xi(K)} \\
                    [v_{i_0},\ldots,v_{i_\dgr}] &\mapsto [w_{\xi(i_0)},\ldots w_{\xi(i_\dgr)}].
\end{align*}

\begin{proposition}[Canonicality]\label{prop: canonicality} 
     For any permutation $\xi : \{1,\ldots,m\} \to \{1,\ldots,m \}$ and for any segment $(b,d)\in \Seg(P)$, we have that $$\Omega_\dgr^{\xi} \left(\goi\left(\ZB_\dgr^\Ffunc\right)((b,d))\right) = \goi\left(\ZB_\dgr^{\xi(\Ffunc)}\right)((b,d)).$$
In other words, the following diagram commutes:
 \begin{center}
           \begin{tikzcd}
           F \arrow[d, "\ZB_\rho^\bullet"] \arrow[r, "\xi"]  & \xi(F) \arrow[d, "\ZB_\rho^\bullet"]\\
      \ZB_\dgr^F \arrow[d, "\goi"]  & \ZB_\dgr^{\xi(F)} \arrow[d, "\goi"] \\
        \goi\big(\ZB_\dgr^F\big) \arrow[r, "\Omega_{\dgr}^{\xi}"]                   & \goi\big(\ZB_\dgr^{\xi(F)}\big)                                
        \end{tikzcd} 
    \end{center}
     
\end{proposition}

\begin{proof}
    Recall that $C_\dgr^K$ and $C_\dgr^{\xi(K)}$ are endowed with their respective standard inner products as described in~\cref{sec: prelim}, with respect to which we have that 
    \begin{align*}
    \Omega_{\xi} : C_\dgr^K &\to C_\dgr^{\xi(K)} \\
                    [v_{i_0},\ldots,v_{i_\dgr}] &\mapsto [w_{\xi(i_0)},\ldots w_{\xi(i_\dgr)}].
    \end{align*}
    is an isometry. Moreover, $\Omega_\dgr^{\xi}$ preserves the cycle and boundary spaces. To see this, observe that the following diagram
    \begin{center}
           \begin{tikzcd}
      C_\dgr^K \arrow[r, "\Omega_\dgr^{\xi}"] \arrow[d, "\partial_\dgr^K"] & C_\dgr^{\xi(K)} \arrow[d, "\partial_\dgr^{\xi(K)}"] \\
        C_{\dgr-1}^K \arrow[r, "\Omega_{\dgr-1}^{\xi}"]                   & C_{\dgr-1}^{\xi(K)}                                
        \end{tikzcd} 
    \end{center}
    commutes as
    \begin{align*}
        \partial_\dgr^{\xi(K)} \left( \Omega_\dgr^{\xi} \left( [v_{i_0},\ldots,v_{i_\dgr}] \right)\right) &= \partial_\dgr^{\xi(K)} \left( [w_{\xi(i_0)},\ldots,w_{\xi(i_\dgr)}] \right) \\
        &=  \sum_{j=0}^\dgr (-1)^j [w_{\xi(i_0)},\ldots,\widehat{w_{\xi(i_j)}},\ldots,w_{\xi(i_\dgr)}] \\
        &= \sum_{j=0}^\dgr (-1)^j \Omega_{\dgr-1}^{\xi} \left([v_{i_0},\ldots,\widehat{v_{i_j}},\ldots,v_{i_\dgr}] \right) \\
        &= \Omega_{\dgr-1}^{\xi} \left(\sum_{j=0}^\dgr (-1)^j \left([v_{i_0},\ldots,\widehat{v_{i_j}},\ldots,v_{i_\dgr}] \right)\right) \\
        &= \Omega_{\dgr-1}^{\xi} \left( \partial_\dgr^K \left( [v_{i_0}, \ldots,v_{i_\dgr}] \right) \right).
    \end{align*}
    Hence, $\Omega_\dgr^{\xi} \left( \ZB_\dgr^\Ffunc((b,d)) \right) = \ZB_\dgr^{\xi(\Ffunc)} ((b,d)) $ for every $(b,d) \in \Seg (P)$. Combining this with the fact that $\Omega_\dgr^{\xi}$ is a isometry, we obtain that 
    \begin{align*}
        \Omega_\dgr^{\xi} \left( \goi\left( \ZB_\dgr^\Ffunc\right)((b,d)) \right) &= \Omega_\dgr^{\xi} \left( \ZB_\dgr^\Ffunc ((b,d)) \ominus \left( W_\dgr^\Ffunc ((b,d)) \right) \right) \\
        &= \Omega_\dgr^{\xi} \left( \ZB_\dgr^\Ffunc ((b,d)) \cap \left( W_\dgr^\Ffunc ((b,d)) \right)^\perp \right) \\ 
        &=  \ZB_\dgr^{\xi(\Ffunc)} ((b,d)) \cap \left( W_\dgr^{\xi(\Ffunc)} ((b,d)) \right)^\perp \\
        &=\ZB_\dgr^{\xi(\Ffunc)} ((b,d)) \ominus \left( W_\dgr^{\xi(\Ffunc)} ((b,d)) \right) \\
        &= \goi \left(\ZB_\dgr^{\xi(\Ffunc)}\right) ((b,d)).
    \end{align*}
\end{proof}

\subsubsection{Homological Exhaustiveness}

Next, in \Cref{thm: completeness} we  refine \Cref{prop: born and dead multiparameter} in order  establish the \emph{exhaustiveness} of Grassmannian persistence diagrams at the level of homology. We first give an informal statement of this theorem; the precise formulation is given later in this section after introducing necessary definitions.

\begin{theoremnn}[Homological exhaustiveness: informal statement of \Cref{thm: completeness}]
    Let a filtration $\Ffunc : P \to \subcx(K)$ be given. Then, for $(b,d)\in \Seg(P)$, every degree-$\dgr$ homology class that  is born precisely at $b$ and  dies precisely at $d$ is captured by $\goi \left( \ZB_\dgr^\Ffunc \right)((b,d))$.
\end{theoremnn}

We start by recalling the definition of \emph{persistent homology group} from~\cite{robins1999towards,edelsbrunner2010computational} and some related definitions from~\cite{harmonicph}.
\begin{definition}[Persistent homology group]\label{defn: m n and p spaces}
    Let $\Ffunc : P \to \subcx(K)$ be a filtration. For any $(b,d) \in \Seg(P)$, let $\iota_\dgr^{b,d} : H_\dgr (\Ffunc(b)) \to H_\dgr (\Ffunc(d))$ denote the homomorpshim induced by the inclusion $\Ffunc(b) \hookrightarrow \Ffunc(d)$. The~\emph{persistent homology group}, $H_\dgr^{b,d}(\Ffunc)$, of $\Ffunc$ is defined by 
    \[
    H_\dgr^{b,d} (\Ffunc) := \Ima \left(\iota_\dgr^{b,d}\right).
    \]
For $(b,d)\in \Seg(P)$, also define
    \begin{align*}
        M_\dgr^{b,d}(\Ffunc) &:= \sum_{b' < b} \left(\iota_\dgr^{b,d}\right)^{-1} \left(H_\dgr^{b', d} (\Ffunc)\right)\subseteq H_\dgr(\Ffunc(b)).
    \end{align*}
For $(b,d) \in \Seg(P) \setminus \diag(P)$, i.e., $b < d$, define
    \begin{align*}
        N_\dgr^{b,d}(\Ffunc) &:= \sum_{ b \leq d' < d} M_\dgr^{b,d'}(\Ffunc) \subseteq H_\dgr(\Ffunc(b))\\
        &\,\,\text{and}\\
        P_\dgr^{b,d}(\Ffunc) &:= \frac{M_\dgr^{b,d}(\Ffunc)}{ N_\dgr^{b,d}(\Ffunc)}.
    \end{align*}
    When $b\in P$ is a minimal element, we follow the convention that $H_\dgr^{b',d}(\Ffunc) = \{0 \}$ in the definitions of $M_\dgr^{b,d}(\Ffunc)$ and $N_\dgr^{b,d}(\Ffunc)$ above. That is, in the case that $b \in P$ is a minimal element, we have that $M_\dgr^{b,d}(\Ffunc) = \ker \left(\iota_\dgr^{b,d}\right)$ and $N_\dgr^{b,d}(\Ffunc) = \sum_{b\leq d'<d}\ker \left(\iota_\dgr^{b,d'} \right)$.
    
\end{definition}

\begin{remark}
Note that
\begin{enumerate}
    \item The spaces $M_\dgr^{b,d}(\Ffunc)$, $N_\dgr^{b,d}(\Ffunc)$, and $P_\dgr^{b,d}(\Ffunc)$ were originally defined for  $1$-parameter filtration in~\cite[Section 3.1, Definitions 3.6 and 3.9]{harmonicph}. In~\cref{defn: m n and p spaces}, we generalize these definitions without restricting $\Ffunc$ to be a $1$-parameter filtration. In fact, $\Ffunc$ can be a filtration over any finite poset $P$.
    \item The defining formula of $N_\dgr^{b,d}(\Ffunc)$ in~\cite[Definition 3.6]{harmonicph} differs from ours, specifically, it is given by:
    \[
    \sum_{b' < b \leq d' < d} \left(\iota_\dgr^{b, d'}\right)^{-1} \left(H_\dgr^{b', d'} (\Ffunc)\right)
    \]
    However, it is straightforward to verify that their formulation is equivalent to ours. That is,
    \[
    \sum_{ b \leq d' < d} M_\dgr^{b,d'}(\Ffunc) = \sum_{b' < b \leq d' < d} \left(\iota_\dgr^{b, d'}\right)^{-1} \left(H_\dgr^{b', d'} (\Ffunc)\right).
    \]
\end{enumerate}

\end{remark}

\begin{example}[$M_\dgr^{b,d}$, $N_\dgr^{b,d}$ and $P_\dgr^{b,d}$]\label{ex: M N and P}
    Consider the filtration $\Ffunc$ depicted in~\cref{fig: synthesisvsbirth}. We have
    \begin{align*}
        M_1^{p_1, p_4}(\Ffunc) &= \spn \{ [v_1 v_2 + v_2 v_4 - v_1 v_4] \} \\  
        N_1^{p_1, p_4}(\Ffunc) &= \{ 0 \} \\
        \dim \left( P_1^{p_1, p_4}(\Ffunc) \right) &= 1 \\
        M_1^{p_2, p_4}(\Ffunc) &= \spn \{ [v_2 v_4 + v_4 v_5 - v_2 v_5] \} \\
        N_1^{p_2, p_4}(\Ffunc) &= \{ 0 \} \\
        \dim \left( P_1^{p_2, p_4}(\Ffunc) \right) &= 1 \\
        M_1^{p_3, p_4}(\Ffunc) &= H_1(\Ffunc(p_3)) \simeq \R^3 \\
        N_1^{p_3, p_4}(\Ffunc) &= \spn \{ [v_1 v_2 + v_2 v_4 - v_1 v_4], [v_2 v_4 + v_4 v_5 - v_2 v_5] \} \\
        \dim \left( P_1^{p_3, p_4}(\Ffunc) \right) &= 1.
    \end{align*}
\end{example}

    As shown in~\cite[Proposition 3.8]{harmonicph}, the interpretation of the subspaces $M^{b,d}_\dgr(F),N^{b,d}_\dgr(F)$ and $P^{b,d}_\dgr(F)$, in the case of a $1$-parameter filtration, is as follows.
    \begin{itemize}
        \item $M_\dgr^{b,d}(\Ffunc)$ is a subspace of $H_\dgr(K_b)$ consisting of homology classes in $H_\dgr(K_b)$ which  
        \begin{center}
            ``$\big($are born before $b \big)$ or $\big($(born at $b$) and (die at $d$ or earlier)$\big)$.''
        \end{center}
        \item $N_\dgr^{b,d}(\Ffunc)$ is a subspace of $H_\dgr(K_b)$ consisting of homology classes in $H_\dgr(K_b)$ which  
        \begin{center}
            ``$\big($are born before $b\big)$ or $\big($(born at $b$) and (die strictly earlier than $d$)$\big)$.''
        \end{center}
        \item $P_\dgr^{b,d}(\Ffunc)$ is the space of equivalence classes of $\dgr$-dimensional cycles\footnote{More precisely, equivalence classes of homology classes.}  which 
        \begin{center}
            ``are born exactly at $b$ and die exactly at $d$.''
        \end{center}
    \end{itemize}

    We now investigate the interpretation of these spaces $M_\dgr^{b,d}(\Ffunc)$, $N_\dgr^{b,d}(\Ffunc)$, and $P_\dgr^{b,d}(\Ffunc)$ in the case when $\Ffunc$ is a filtration over an arbitrary finite poset $P$. In the 1-parameter case we have the notion that a cycle is born before or exactly at a given index. In the case of a general poset, this situation is more subtle and requires a new definition.  \Cref{rem:interp-def} provides such an interpretation.

\begin{figure}[!t]
    \centering
    \includegraphics[scale=16]{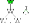}
    \caption{A poset $P$ and a filtration $\Ffunc$ indexed over $P$ is depicted. This filtration serves two purposes: (1) it provides an example for the computation of $M_\dgr^{b,d}(\Ffunc)$, $N_\dgr^{b,d}(\Ffunc)$ and $P_\dgr^{b,d}(\Ffunc)$ (see~\cref{ex: M N and P} for details), and (2) it illustrates the difference between notions of synthesis and birth of a class, as defined in~\cref{defn: synthesis and birth of classes} (see~\cref{ex: synthesisvsbirth} for further explanation).}
    \label{fig: synthesisvsbirth}
\end{figure}

\begin{definition}\label{defn: synthesis and birth of classes}
    Let $\Ffunc : P \to \subcx(K)$ be a filtration. For $b \in P$ and $\dgr\geq 0$ be an integer.
    \begin{enumerate}
        \item We say that a homology class $\gamma \in H_\dgr(\Ffunc(b))$ is \emph{born} at $b$ if \footnote{$\gamma$ cannot be written as a sum of homology classes that were alive earlier than $b$}
        \[
        \gamma \notin \sum_{b' < b} H_\dgr^{b',b} (\Ffunc).
        \]
        \item We say that a nonzero homology class $\gamma \in H_\dgr(\Ffunc(b))$ is \emph{synthesized} at or before $b$ if
        \[
        \gamma \in \sum_{b' < b} H_\dgr^{b',b} (\Ffunc).
        \]
        \item Suppose $\gamma \in H_\dgr(\Ffunc(b))$ is \emph{born} at $b$ and let $d>b$. We say that $\gamma $  \emph{dies} at or before $d$ if \footnote{$\gamma$ merges with a class that was born or synthesized before $b$}
        \[
        \iota_\dgr^{b,d}(\gamma) \in \sum_{b'<b} H_\dgr^{b',d}(\Ffunc).
        \]
    \end{enumerate}
    When $b\in P$ is a minimal element, we follow the convention that $H_\dgr^{b',b}(\Ffunc) = \{ 0 \} = H_\dgr^{b',d}(\Ffunc)$ in the conditions above. Thus, when $b\in P$ is minimal, every nonzero homology class in $H_\dgr(\Ffunc(b))$ is born at $b$. 
\end{definition}

\begin{remark}\label{rem:interp-def}

    The definition above attempts to formally capture the notions of birth and death (and also synthesis) of homology classes. Note that items $1.$ (being born) and $2.$ (being synthesized) in the above definition are mutually exclusive, distinguishing between two distinct ways in which a ``new'' homology class can ``appear'' at $b$. While synthesis allows for combining ``older'' classes (in the sense of belonging to $H_\dgr^{b',b}(\Ffunc)$ for $b'<b$), the notion of being born at $b$ represents a more restrictive way of ``appearing'' at $b$.

\end{remark}

\begin{example}[Difference between synthesis and birth]\label{ex: synthesisvsbirth}
    Consider the filtration $\Ffunc$ depicted in~\cref{fig: synthesisvsbirth}. The homology class $[v_1 v_2 +v_2 v_5 - v_4 v_5 -v_1 v_4 ] \in H_1(\Ffunc(p_3))$ is synthesized at or before $p_3$ as $$[v_1 v_2 +v_2 v_5 - v_4 v_5 -v_1 v_4 ] = \underbrace{[v_1 v_2 + v_2 v_4 - v_1 v_4]}_{\in H_1^{p_1, p_3}(\Ffunc)} - \underbrace{[v_2 v_4 + v_4 v_5 - v_2 v_5]}_{\in H_1^{p_2, p_3}(\Ffunc)} \in \sum_{p<p_3} H_1^{p,p_3} (F).$$ The homology class $[v_2 v_3 + v_3 v_5 - v_2 v_5] \in H_1(\Ffunc(p_3))$ is born at $p_3$ as $$[v_2 v_3 + v_3 v_5 - v_2 v_5] \notin \sum_{p<p_3} H_1^{p,p_3} (F).$$
\end{example}

We now formally interpret the three subspaces of $H_\rho(F(b))$ introduced in \Cref{defn: m n and p spaces} using our new terminology in the definition above. The proof of the following proposition is  postponed to the end of this section.

\begin{proposition}\label{prop: meaning of m n and p in general case}
    For every $b<d\in P$ and integers $\dgr\geq 0$,
    \begin{enumerate}
        \item $M_\dgr^{b,d}(\Ffunc)$ is the subspace of $H_\dgr(\Ffunc(b))$ consisting of those homology classes that 
        \begin{center}
            ``$\big($are synthesized at or before $b\big)$ or $\big($(are born at $b$)  and (die at or before $d$)$\big)$. ''
        \end{center}
        \item $N_\dgr^{b,d}(\Ffunc)$ is the subspace of $H_\dgr(\Ffunc(b))$ consisting of those homology classes that
        \begin{center}
             ``$\big($are synthesized at or before $b\big)$ or $\big($(are born at $b$) and (die strictly before $d$)$\big)$.''
        \end{center}
        \end{enumerate}
        As a consequence,  $P_\dgr^{b,d} (\Ffunc) = \frac{M_\dgr^{b,d}(\Ffunc)}{N_\dgr^{b,d}(\Ffunc)}$ is the vector space of  degree-$\dgr$ homology classes that 
        \begin{center}
            ``are born exactly at $b$ and die exactly at $d$.''
        \end{center}
        
\end{proposition}

We now present the main result of this section, which identifies a canonical surjection  mapping the cycles produced by the Grassmannian persistence diagram $\goi \left( \ZB_\dgr^\Ffunc \right) ((b,d))$ evaluated at the segment $(b,d)$ to the \emph{equivalence classes} of homology classes in $P_\dgr^{b,d} (\Ffunc)$.

\begin{theorem}[Homological exhaustiveness]\label{thm: completeness}
    Let $\Ffunc : P \to \subcx(K)$ be a filtration and $\dgr\geq 0$ be an integer. Then, for any $b<d \in P$, we have that the map
    \begin{align*}
        \goi \left( \ZB_\dgr^\Ffunc\right) ((b,d))  &\to P_\dgr^{b,d}(\Ffunc) = \frac{M_\dgr^{b,d}(\Ffunc)}{N_\dgr^{b,d}(\Ffunc)} \\
        z &\mapsto [z]+ N_\dgr^{b,d}(\Ffunc)
    \end{align*}
    is a surjection, where $[z] = z+\Bfunc_\dgr(\Ffunc(b))$ is the homology class of $z$ in $H_\dgr(\Ffunc(b))$.
\end{theorem}

 We will need the following definition and lemmas when proving~\cref{thm: completeness}. The proofs of the lemmas are postponed to the end of this section.

\begin{definition}
    For a simplicial complex $K$ and any degree $ \dgr \geq 0$, let $$\phi_\dgr^K : \Zfunc_\dgr(K) \to \frac{\Zfunc_\dgr(K)}{  \Bfunc_\dgr(K)} = H_\dgr(K)$$ denote the canonical quotient map. For a filtration $\Ffunc : P \to \subcx(K)$, we define, for $b<d\in P$,
    \begin{align*}
        \tilde{M}_\dgr^{b,d}(\Ffunc) &:= \left(\phi_\dgr^{\Ffunc(b)}\right)^{-1} \left(M_\dgr^{b,d}(\Ffunc)\right) \subseteq \Zfunc_\dgr (K), \\
        \tilde{N}_\dgr^{b,d}(\Ffunc) &:= \left(\phi_\dgr^{\Ffunc(b)}\right)^{-1} \left(N_\dgr^{b,d}(\Ffunc)\right) \subseteq \Zfunc_\dgr (K).
    \end{align*}
    Observe that $\tfrac{\tilde{M}_\dgr^{b,d}(\Ffunc)}{ \Bfunc_\dgr (K)} = M_\dgr^{b,d}(\Ffunc)$ and $\tfrac{\tilde{N}_\dgr^{b,d}(\Ffunc)}{ \Bfunc_\dgr (K)} = N_\dgr^{b,d}(\Ffunc)$.
\end{definition}

\begin{lemma}\label{lem: m tilde n tilde explicit}
    \begin{align*}
        \tilde{M}_\dgr^{b,d}(\Ffunc) &=\sum_{b'<b} \left\{ z \in \Zfunc_\dgr (\Ffunc(b)) \mid \exists z' \in \Zfunc_\dgr(\Ffunc(b'))  \text{ such that } z-z' \in \Bfunc_\dgr(\Ffunc(d)) \right\} \\
        \tilde{N}_\dgr^{b,d}(\Ffunc) &= \sum_{b'<b\leq d' < d} \{ z \in \Zfunc_\dgr (\Ffunc(b)) \mid \exists z' \in \Zfunc_\dgr(\Ffunc(b'))  \text{ such that } z-z' \in \Bfunc_\dgr(\Ffunc(d')) \}
    \end{align*}
\end{lemma}

Recall from~\cref{remark: number of cycles that are born at b and die at d} that the subspace $\sum_{(a,c)<_\times (b,d)} \ZB_\dgr^\Ffunc ((a,c)) \subseteq \ZB_\dgr^\Ffunc ((b,d))$ is denoted by $W_{(b,d)} $. We will continue to use this notation throughout the rest of this section.
\begin{lemma}\label{lem: varphi isom}
    The map 
    \begin{align*}
        \varphi : \frac{\ZB_\dgr^\Ffunc ((b,d))}{W_{(b,d)}} &\to \frac{\tilde{M}_\dgr^{b,d}(\Ffunc)}{\tilde{N}_\dgr^{b,d}(\Ffunc)} \\
                                                           z + W_{(b,d)}     &\mapsto z + \tilde{N}_\dgr^{b,d}(\Ffunc)
    \end{align*}
    is a surjection.
\end{lemma}

\begin{proof}[Proof of~\cref{thm: completeness}]
    We will obtain the claim that the map
    \begin{align*}
        \goi \left( \ZB_\dgr^\Ffunc\right) ((b,d))  &\to P_\dgr^{b,d}(\Ffunc) = \frac{M_\dgr^{b,d}(\Ffunc)}{N_\dgr^{b,d}(\Ffunc)} \\
        z &\mapsto [z]+ N_\dgr^{b,d}(\Ffunc)
    \end{align*}
    is a surjection by writing it as a composition of surjections. Note that, the map
    \begin{align*}
        [\cdot]_{\ZB_\dgr} : \goi\left( \ZB_\dgr^\Ffunc\right) ((b,d)) &\to \frac{\ZB_\dgr^\Ffunc((b,d))}{W_{(b,d)}} \\
        z &\mapsto [z]_{\ZB_\dgr} := z + W_{(b,d)}
    \end{align*}
    is an isomorphism because $\goi\left( \ZB_\dgr^\Ffunc\right) ((b,d)) = \ZB_\dgr^\Ffunc((b,d)) \cap W_{(b,d)}^\perp$. Let
    \[
    \varphi : \frac{\ZB_\dgr^\Ffunc ((b,d))}{W_{(b,d)}} \to \frac{\tilde{M}_\dgr^{b,d}(\Ffunc)}{\tilde{N}_\dgr^{b,d}(\Ffunc)}
    \]
    be the surjection described in~\cref{lem: varphi isom}. Let 
    \[
    \theta : \frac{\tilde{M}_\dgr^{b,d}(\Ffunc)}{\tilde{N}_\dgr^{b,d}(\Ffunc)} \to \frac{\tilde{M}_\dgr^{b,d}(\Ffunc) / \Bfunc_\dgr (\Ffunc(b))}{\tilde{N}_\dgr^{b,d}(\Ffunc) / \Bfunc_\dgr(\Ffunc(b))} = \frac{M_\dgr^{b,d}(\Ffunc)}{N_\dgr^{b,d}(\Ffunc)}
    \]
    be the canonical isomorphism. Then, the composition $\theta \circ \varphi \circ [\cdot]_{\ZB_\dgr}$ 
    \begin{center}
    \begin{tikzcd}
        {\goi\left(\ZB_\dgr^\Ffunc\right)((b,d))} \arrow[r, "{[\cdot]_{\ZB_\dgr}}"] & {\frac{\ZB_\dgr^\Ffunc((b,d))}{W_{(b,d)}}} \arrow[r, "\varphi"] & {\frac{\tilde{M}_\dgr^{b,d}(\Ffunc)}{\tilde{N}_\dgr^{b,d}(\Ffunc)}} \arrow[r, "\theta"] & {\frac{\tilde{M}_\dgr^{b,d}(\Ffunc) / \Bfunc_\dgr (\Ffunc(b))}{\tilde{N}_\dgr^{b,d}(\Ffunc) / \Bfunc_\dgr(\Ffunc(b))} = \frac{M_\dgr^{b,d}(\Ffunc)}{N_\dgr^{b,d}(\Ffunc)}}
    \end{tikzcd}
    \end{center}
    agrees with the candidate surjection since
    \begin{align*}
        \theta \circ \varphi \circ [\cdot]_{\ZB_\dgr} : \goi\left( \ZB_\dgr^\Ffunc \right)((b,d)) &\to P_\dgr^{b,d}(\Ffunc)=\frac{M_\dgr^{b,d}(\Ffunc)}{N_\dgr^{b,d}(\Ffunc)} \\
        z &\mapsto [z] + N_\dgr^{b,d}(\Ffunc).
    \end{align*}
\end{proof}

\paragraph{Proofs of auxiliary results.}

\begin{proof}[Proof of \Cref{prop: meaning of m n and p in general case}]
Observe that Claim $2$ in the statement follows from Claim $1$ as $N_\dgr^{b,d}(\Ffunc) = \sum_{b\leq d' < d} M_\dgr^{b,d'}(\Ffunc)$. Thus, it suffices to prove that $M_\dgr^{b,d}(\Ffunc)$ consists of those degree-$\dgr$ homology classes in $H_\dgr (\Ffunc(b))$ that
        \begin{center}
            ``$\big($are synthesized at or before $b\big)$ or $\big($(are born at $b$)  and (die at or before $d$)$\big)$. ''
        \end{center}
Let $U_\dgr^{b,d}(\Ffunc)$ denote the subset of $H_\dgr (\Ffunc(b))$ consisting of those degree-$\dgr$ homology classes that $\big($are synthesized at or before $b\big)$ or $\big($(are born at $b$) and (die at or before $d$)$\big)$. 

\smallskip
We first prove that $U_\dgr^{b,d}(\Ffunc) \subseteq M_\dgr^{b,d}(\Ffunc)$. Let $\gamma \in U_\dgr^{b,d}(\Ffunc)$, then there are two cases: 
        \begin{itemize}
            \item \underline{Case 1}: $\gamma$ is synthesized at or before $b$, or
            \item \underline{Case 2}: $\gamma$ is born at $b$ and dies at or before $d$,
        \end{itemize}
        We now show that in each of these cases $\gamma$ must be in $M_\dgr^{b,d}(\Ffunc)$.
        \smallskip
        
\noindent        
\underline{Case 1}: Assume $\gamma$ is synthesized at or before $b$. That is,
        \[
        \gamma \in \sum_{b'<b} H_\dgr^{b',b}(\Ffunc).
        \]
        In this case, we have that 
        \[
        \gamma = \sum_{i=1}^k \gamma_i,
        \]
        where $\gamma_i \in H_\dgr^{b_i, b}(\Ffunc)$ for $b_i<b$ and $i=1,\ldots ,k$. Since $\iota_\dgr^{b,d}\circ \iota_\dgr^{b_i, b} = \iota_\dgr^{b_i, d}$, we have that $\iota_\dgr^{b,d}(\gamma_i) \in H_\dgr^{b_i, d}(\Ffunc)$. Therefore, $\gamma_i \in \left(\iota_\dgr^{b,d} \right)^{-1} \left(H_\dgr^{b_i, d} (\Ffunc) \right)$. Thus, 
        \[
        \gamma =\sum_{i=1}^k \gamma_i \in \sum_{b'<b} \left(\iota_\dgr^{b,d} \right)^{-1} \left(H_\dgr^{b', d}(\Ffunc) \right)= M_\dgr^{b,d}(\Ffunc).
        \]

        \noindent
        \underline{Case 2:} Assume $\gamma$ is born at $b$ and dies at or before $d$. In this case, we have that $\gamma \notin \sum_{b'<b} H_\dgr^{b',b}(\Ffunc)$ and $\iota_\dgr^{b,d} (\gamma) \in \sum_{b'<b}H_\dgr^{b',d}(\Ffunc)$. That is, we have that
        \[
        \iota_\dgr^{b,d}(\gamma) = \sum_{i=1}^k \iota_\dgr^{b_i,d} (\gamma_i),
        \]
        where $\gamma_i \in H_\dgr(\Ffunc(b_i))$ for $b_i< b$ and $i=1,\ldots ,k$. Since $\iota_\dgr^{b_i, d} = \iota_\dgr^{b,d}\circ \iota_\dgr^{b_i,b}$, we have
        \begin{align*}
            \iota_\dgr^{b,d}(\gamma) &= \sum_{i=1}^k \iota_\dgr^{b_i,d} (\gamma_i) \\
                                  &= \sum_{i=1}^k \iota_\dgr^{b,d} \circ \iota_\dgr^{b_i,b} (\gamma_i)\\
                                  &= \iota_\dgr^{b,d} \left(\sum_{i=1}^k \iota_\dgr^{b_i,d}(\gamma_i) \right).
        \end{align*}
        Hence, we obtain that $\tilde{\gamma}:= \gamma - \sum_{i=1}^k \iota_\dgr^{b_i,b}(\gamma_i) \in \ker(\iota_\dgr^{b,d})$. Therefore, $\tilde{\gamma} \in M_\dgr^{b,d}(\Ffunc)$. To conclude that $\gamma \in M_\dgr^{b,d}(\Ffunc)$, it is enough to verify that $\sum_{i=1}^k \iota_\dgr^{b_i,b}(\gamma_i) \in M_\dgr^{b,d}(\Ffunc)$. Since $\iota_\dgr^{b,d}(\iota_\dgr^{b_i,b}(\gamma_i)) \in H_\dgr^{b_i,d}(\Ffunc)$, we have that $\sum_{i=1}^k \iota_\dgr^{b_i,b}(\gamma_i) \in M_\dgr^{b,d}(\Ffunc)$. Therefore, $\gamma = \tilde{\gamma} + \sum_{i=1}^k \iota_\dgr^{b_i,b}(\gamma_i) \in M_\dgr^{b,d}(\Ffunc)$. Hence, we conclude that $U_\dgr^{b,d}(\Ffunc) \subseteq M_\dgr^{b,d}(\Ffunc)$

\smallskip
        We now show that $M_\dgr^{b,d}(\Ffunc) \subseteq U_\dgr^{b,d}(\Ffunc)$. Let $\gamma \in M_\dgr^{b,d}(\Ffunc)$. There is nothing to show if $\gamma$ is synthesized at or before $b$. Assume that it is not the case that $\gamma$ is synthesized at or before $b$. Then, $\gamma$ must be born at $b$. In order to conclude that $\gamma \in U_\dgr^{b,d}(\Ffunc)$, we need to show that $\gamma$ dies at or before $d$. That is, we need to verify that 
        \[
        \iota_\dgr^{b,d}(\gamma) \in \sum_{b'<b}H_\dgr^{b',d}(\Ffunc).
        \]
        Since $\gamma \in M_\dgr^{b,d}(\Ffunc)$, we have that 
        \[
        \gamma = \sum_{i=1}^k \gamma_i,
        \]
        where $\gamma_i \in \left(\iota_\dgr^{b,d}\right)^{-1}\left(H_\dgr^{b_i,d} (\Ffunc) \right)$ for $b_i<b$ and $i=1,\ldots ,k$. Therefore, we have that $\iota_\dgr^{b,d}(\gamma_i) \in H_\dgr^{b_i, b}(\Ffunc)$ and
        \[
        \iota_\dgr^{b,d}(\gamma) = \sum_{i=1}^k \iota_\dgr^{b,d}(\gamma_i) \subseteq \sum_{b'<b}H_\dgr^{b', d}(\Ffunc).
        \]
        Thus, $\gamma$ dies at or before $d$. Therefore, we conclude that $M_\dgr^{b,d}(\Ffunc) \subseteq U_\dgr^{b,d}(\Ffunc)$. Hence, we obtain that $M_\dgr^{b,d}(\Ffunc) = U_\dgr^{b,d}(\Ffunc).$ 
\end{proof}

\begin{proof}[Proof of \Cref{lem: m tilde n tilde explicit}]
    As $\tilde{M}_\dgr^{b,d}(\Ffunc) = \left(\phi_\dgr^{\Ffunc(b)}\right)^{-1} \left(M_\dgr^{b,d}(\Ffunc)\right)$, we start by computing $M_\dgr^{b,d}(\Ffunc)$ as follows
    \begin{align*}
        M_\dgr^{b,d}(\Ffunc) &=\sum_{b'<b} \left(\iota_\dgr^{b,d}\right)^{-1} \left(H_\dgr^{b', d} (\Ffunc)\right) \\
                          &= \sum_{b'<b} \left(\iota_\dgr^{b,d}\right)^{-1} \left( \frac{\Zfunc_\dgr(\Ffunc(b')) + \Bfunc_\dgr(\Ffunc(d) )}{\Bfunc_\dgr(\Ffunc(d))} \right) \\
                          &= \sum_{b'<b} \left\{ z + \Bfunc_\dgr(\Ffunc(b)) \in H_\dgr(\Ffunc(b)) \mid z +\Bfunc_\dgr(\Ffunc(d)) \in  \frac{\Zfunc_\dgr(\Ffunc(b')) + \Bfunc_\dgr(\Ffunc(d) )}{\Bfunc_\dgr(\Ffunc(d))}  \right\} \\
                          &= \sum_{b'<b} \{ z + \Bfunc_\dgr(\Ffunc(b)) \in H_\dgr(\Ffunc(b)) \mid \exists z' \in \Zfunc_\dgr (\Ffunc(b')) \text{ such that } z- z' \in \Bfunc_\dgr (\Ffunc(d)) \}
    \end{align*}
    Therefore, as $\phi_\dgr^{\Ffunc(b)}$ is surjective, we have that
    \begin{align*}
        \tilde{M}_\dgr^{b,d}(\Ffunc) &= \left(\phi_\dgr^{\Ffunc(b)}\right)^{-1} \left(M_\dgr^{b,d}(\Ffunc)\right) \\
                                  &= \left(\phi_\dgr^{\Ffunc(b)}\right)^{-1} \left( \sum_{b'<d} \{ z + \Bfunc_\dgr(\Ffunc(b)) \in H_\dgr(\Ffunc(b)) \mid \exists z' \in \Zfunc_\dgr (\Ffunc(b')) \text{ such that } z- z' \in \Bfunc_\dgr (\Ffunc(d)) \}\right) \\
                                  &= \sum_{b'<b} \left( \left(\phi_\dgr^{\Ffunc(b)}\right)^{-1} \{ z + \Bfunc_\dgr(\Ffunc(b)) \in H_\dgr(\Ffunc(b)) \mid \exists z' \in \Zfunc_\dgr (\Ffunc(b')) \text{ such that } z- z' \in \Bfunc_\dgr (\Ffunc(d)) \} \right) \\
                                  &= \sum_{b'<b} \{ z \in \Zfunc_\dgr (\Ffunc(b)) \mid \exists z' \in \Zfunc_\dgr(\Ffunc(b'))  \text{ such that } z-z' \in \Bfunc_\dgr(\Ffunc(d)) \}
    \end{align*}        
    The result for $\tilde{N}_\dgr^{b,d}(\Ffunc)$ follows from the fact that $N_\dgr^{b,d}(\Ffunc) = \sum_{b\leq d'<d} M_\dgr^{b,d'}(\Ffunc)$ as follows:
    \begin{align*}
        \tilde{N}_\dgr^{b,d}(\Ffunc) &= \left(\phi_\dgr^{\Ffunc(b)}\right)^{-1} N_\dgr^{b,d}(\Ffunc) \\
                                  &= \left(\phi_\dgr^{\Ffunc(b)}\right)^{-1} \left( \sum_{b\leq d'<d} M_\dgr^{b,d'}(\Ffunc) \right) \\
                                  &= \sum_{b\leq d'< d} \left( \left(\phi_\dgr^{\Ffunc(b)}\right)^{-1}  M_\dgr^{b,d'}(\Ffunc) \right) \;\;\;\;\;\;\;\;\text{as $\phi_\dgr^{\Ffunc(b)}$ is surjective} \\
                                  &= \sum_{b\leq d'< d} \tilde{M}_\dgr^{b,d'}(\Ffunc) \\
                                  &= \sum_{b\leq d'< d} \sum_{b'<b} \{ z \in \Zfunc_\dgr (\Ffunc(b)) \mid \exists z' \in \Zfunc_\dgr(\Ffunc(b'))  \text{ such that } z-z' \in \Bfunc_\dgr(\Ffunc(d')) \} \\
                                  &= \sum_{b'<b\leq d' < d} \{ z \in \Zfunc_\dgr (\Ffunc(b)) \mid \exists z' \in \Zfunc_\dgr(\Ffunc(b'))  \text{ such that } z-z' \in \Bfunc_\dgr(\Ffunc(d')) \}.
    \end{align*}
\end{proof}

\begin{proof}[Proof of \Cref{lem: varphi isom}]
    Observe that $\ZB_\dgr^\Ffunc ((b,d)) \subseteq \tilde{M}_\dgr^{b,d}(\Ffunc)$. This suggests defining the map
    \begin{align*}
        \psi : \ZB_\dgr^\Ffunc ((b,d)) &\to \frac{\tilde{M}_\dgr^{b,d}(\Ffunc)}{\tilde{N}_\dgr^{b,d}(\Ffunc)} \\
                        z  &\mapsto z + \tilde{N}_\dgr^{b,d}(\Ffunc).
    \end{align*}
    We first show that $\psi$ is surjective. Let $z\in \tilde{M}_\dgr^{b,d}(\Ffunc)$. By~\cref{lem: m tilde n tilde explicit}, we can write $z= z_1 + \cdots + z_m$ where $z_i \in \Zfunc(\Ffunc(b))$ such that for each $i=1,\ldots,m$ there is $z_i' \in \Zfunc_\dgr(\Ffunc(b_i))$ for some $b_i < b$ with $z_i - z_i' \in \Bfunc(\Ffunc(d))$. Let 
    \[
        x := z - \left(z_1' + \cdots + z_m'\right) = \left(z_1 - z_1'\right) + \cdots + \left(z_m - z_m'\right).
    \]
    Observe that $x \in \ZB_\dgr^\Ffunc ((b,d))$ as $(z_i - z_i') \in \ZB_\dgr^\Ffunc((b,d))$ for each $i=1,\ldots,m$. Observe also that, by the description of $\tilde{N}_\dgr^{b,d}(\Ffunc)$ given in~\cref{lem: m tilde n tilde explicit}, $z_i' \in \tilde{N}_\dgr^{b,d}(\Ffunc)$ as $z_i'\in \Zfunc_\dgr(\Ffunc(b_i)) \subseteq \tilde{N}_\dgr^{b,d}(\Ffunc)$. So, $x-z = -(z_1' + \ldots + z_m') \in \tilde{N}_\dgr^{b,d}(\Ffunc)$. Thus, $\psi(x)= x + \tilde{N}_\dgr^{b,d}(\Ffunc) = z + \tilde{N}_\dgr^{b,d}(\Ffunc)$. Hence, $\psi$ is surjective.

    We now show that 
    \[
    W_{(b,d)}\subseteq\ker(\psi).
    \]
    Let $(a,c)<_\times (b,d)$ and let $x \in \ZB_\dgr^\Ffunc((a,c))$. If $a<b$, then, by the description of $\tilde{N}_\dgr^{b,d}(\Ffunc)$ given in~\cref{lem: m tilde n tilde explicit}, $x \in \tilde{N}_\dgr^{b,d}(\Ffunc)$. Therefore $x\in \ker(\psi)$. If $a=b$ and $c<d$, then let $b'$ be any element that is strictly smaller than $b$, i.e., $b'<b$. Then $x-0 \in \Bfunc_\dgr(\Ffunc(c))$ and $0 \in \Zfunc_\dgr(\Ffunc(b'))$. Thus, by~\cref{lem: m tilde n tilde explicit}, $x\in \tilde{N}_\dgr^{b,d}(\Ffunc)$. Hence $x\in \ker(\psi)$. So, $\ZB_\dgr^\Ffunc((a,c)) \subseteq \ker(\psi)$ for any $(a,c)<_\times (b,d)$. Thus, we conclude that $W_{(b,d)} \subseteq \ker(\psi)$. Therefore, we conclude that the map 
    \begin{align*}
        \varphi : \frac{\ZB_\dgr^\Ffunc((b,d))}{W_{(b,d)}} &\to \frac{\tilde{M}_\dgr^{b,d}(\Ffunc)}{\tilde{N}_\dgr^{b,d}(\Ffunc)} \\
        z + W_{(b,d)} &\mapsto z + \tilde{N}_\dgr^{b,d}(\Ffunc)
    \end{align*}
    is a surjection as it can be written as the following composition of surjections: 
    \[
        \frac{\ZB_\dgr^\Ffunc((b,d))}{W_{(b,d)}} \twoheadrightarrow \frac{\ZB_\dgr^\Ffunc((b,d))}{\ker(\psi)} \xrightarrow{\cong} \frac{\tilde{M}_\dgr^{b,d}(\Ffunc)}{\tilde{N}_\dgr^{b,d}(\Ffunc)}.
    \]

\end{proof}

\subsection{Stability of Grassmannian Persistence Diagrams}\label{subsec: stability of grassmannian pd}   
In this section, we establish the (edit distance) stability of Grassmannian persistence diagrams. We now define the category of filtrations over general posets, which is inspired by and extends the construction in~\cite[Definition 4.7]{edit}.

\begin{definition}[Category of filtrations over general posets]
    We define $\mpfil(K)$ to be the category where
    \begin{itemize}
        \item Objects are filtrations $\Ffunc : P \to \subcx(K)$ where $P$ is any finite metric poset,
        \item  A morphism from an object $\Ffunc : P \to \subcx (K)$ to another object $\Gfunc : Q \to \subcx(K)$ is given by a Galois connection $\ladj{f} : P \leftrightarrows S : \radj{f}$ such that $\Gfunc = \Ffunc \circ \radj{f}$.
    \end{itemize}
\end{definition}
\nomenclature[38]{$\mpfil(\cdot)$}{Category of filtrations of a simplicial complex over general posets}

\begin{definition}[Cost of a morphim in $\mpfil(K)$]
     The cost of a morphism $(\ladj{f}, \radj{f})$ in $\mpfil(K)$ is given by $\dis(\ladj{f})$, the distortion of the left adjoint.
\end{definition}

Recall the notion of edit distance from \cref{defn: edit dist}.

\begin{theorem}[Stability]\label{thm: stability of mp gpd}
    Let $\Ffunc$ and $\Gfunc$ be two filtrations in $\mpfil(K)$. Then, for any degree $\dgr\geq 0$, we have
    \[
    d_{\dtfnc\left(C_\dgr^K\right)}^E \left(\goi \left( \ZB_\dgr^\Ffunc \right), \goi \left(\ZB_\dgr^\Gfunc \right)\right) \leq d_{\mpfil(K)}^E (\Ffunc, \Gfunc).
    \]
\end{theorem}

\begin{proof}
    Observe that the assignment
    \[
    \Ffunc \mapsto \ZB_\dgr^\Ffunc
    \]
    is a functor from $\mpfil(K)$ to $\mon\left(C_\dgr^K \right)$ for every degree $\dgr\geq 0$. As a result of this functoriality, we obtain
    \[
    d_{\mon\left(C_\dgr^K\right)}^E \left(\ZB_\dgr^\Ffunc, \ZB_\dgr^\Gfunc\right) \leq d_{\mpfil(K)}^E (\Ffunc, \Gfunc).
    \]
    Thus, combining the above inequality with the one in~\cref{thm: goi functor and stable}, we obtain the desired inequality
    \[
    d_{\dtfnc\left(C_\dgr^K\right)}^E \left(\goi \left( \ZB_\dgr^\Ffunc \right), \goi \left(\ZB_\dgr^\Gfunc \right)\right) \leq d_{\mpfil(K)}^E (\Ffunc, \Gfunc).
    \]
\end{proof}

\begin{remark}[Non-triviality of our stability bounds]
A notion of edit distance between (birth-death induced) signed persistence diagrams was introduced in~\cite{edit}. However, the edit distance between two signed persistence diagrams can be trivial as noted in~\cite[Erratum, Example 10.1]{edit-arxiv}. 
To address this,~\cite[Erratum]{edit-arxiv} presents a modified construction that resolves this issue. Our isometry result,~\cref{thm: isometry of goi}, further extends these improvements by showing that the edit distance between Grassmannian persistence diagrams remains non-trivial and meaningful. Moreover, we establish a lower bound  (see~\cref{thm: edit between signed is lower bound for edit between grassmannian}) for this distance, building on the recent developments in~\cite{edit-arxiv}.
\end{remark}

\subsection{Grassmannian versus Signed Persistence Diagrams}\label{subsec: compare gpd to signed}

In this section, we compare the strength of our construction Grassmannian persistence diagrams with the concepts of signed persistence diagram, whether they are birth-death-induced or rank-induced. In particular, we show that our Grassmannian persistence diagrams recover (see~\cref{prop: goi to signed pd}) and is strictly stronger than signed persistence diagrams (see~\cref{ex: signed pd dont fully capture}).

We first recall the concepts of birth-death-induced signed persistence diagram; see \cite[Definition 8.1]{edit}\footnote{In \cite[Definition 8.1]{edit} the authors consider the more restrictive case when the underlying poset is a lattice.}, and rank-induced signed persistence diagram; see~\cite[Definition 1.5, Corollary 2.6, Equation 5.1]{botnan2022signed}.

\begin{definition}[Birth-death-induced signed persistence diagram]\label{defn: signed bd-pd}
    Let $P$ be any finite poset and let $\Ffunc : P\to \subcx(K)$ be a filtration. For any degree $\dgr\geq 0$, the \emph{degree-$\dgr$ birth-death-induced signed persistence diagram} of $\Ffunc$ is the M\"obius inverse of $\dim \left(\ZB_\dgr^\Ffunc\right) : \Seg(P) \to \Z$ with respect to the product order on $\Seg(P)$.
\end{definition}

\begin{definition}[Rank-induced signed persistence diagram]\label{defn: signed rank-pd}
    Let $P$ be any finite poset and let $\Ffunc : P\to \subcx(K)$ be a filtration. For any degree $\dgr\geq 0$, the \emph{degree-$\dgr$ rank-induced signed persistence diagram} of $\Ffunc$ is the M\"obius inverse of $\beta_\dgr^\Ffunc : \Seg(P) \to \Z$ (persistent Betti numbers; see~\cref{defn: persistent betti rank inv}) with respect to the reverse-inclusion order on $\Seg(P)$.
\end{definition}




\begin{remark}
    In the case of $1$-parameter filtrations, the birth-death induced signed persistence diagram and the rank-induced signed persistence diagram are in fact always nonnegative and they coincide 
    away from the diagonal as shown in~\cite[Section 9.1]{edit}. However, this equivalence breaks down in the multiparameter case, where the two constructions can yield different diagrams (see \cref{ex: signed pd dont fully capture} for an explicit example). In both cases, the diagrams admit an inductive computation based on the defining property of M\"obius inversion, as discussed in~\cref{rem: computing mob inverse}.

\end{remark}

\begin{example}[Grassmannian versus birth-death induced signed persistence diagrams; {\cref{ex: multiparameter gpd} continued}]\label{ex: multiparameter signeg-bd-pd}
    Let $P = \{1<2<3 \}\times \{1<2<3 \}$ be the $3$ by $3$ grid with the product order, as depicted in~\cref{fig: 3x3 filtration}. Let $\Ffunc$ be the filtration over $P$ depicted in~\cref{fig: 3x3 filtration} and  let $\dim \left(\ZB_1^\Ffunc \right)$ be the birth-death function of $\Ffunc$ in degree $1$; see~\cite[Definition 5.6]{edit}.
\begin{align*}
    \dim\left( \ZB_1^\Ffunc \right) : \Seg (P) &\to \Z \\
                    (p, p') &\mapsto \dim \left(\Zfunc_1 (\Ffunc(p)) \cap \Bfunc_1(\Ffunc(p')) \right).
\end{align*}    
The birth-death induced signed persistence diagram of $\Ffunc$ (see \cite[Definition 8.1]{edit})  is given by the classical M\"obius inversion of $\dim\left(\ZB_1^\Ffunc\right)$ with respect to the product order on  $\Seg(P)$. Let $\partial_{(\Seg(P),\prodord)} \left(\dim \left( \ZB_1^\Ffunc\right)\right)$ denote the birth-death induced signed persistence diagram of $\Ffunc$ in degree $1$ which is given explicitly by

    \[
        \partial_{(\Seg(P),\prodord)} \left(\dim \left( \ZB_1^\Ffunc\right)\right)(I) =\begin{cases}
            -1 & \text{ if } I \in \left\{ \begin{multlined}
                \left((1,2), (3,3)\right), \left((2,1), (3,3)\right), \vspace{-1em} \\  \left((2,2), (2,3)\right), \left((2,2), (3,2)\right)
            \end{multlined} \right\}\\
            1 & \text{ if } I \in \left\{ \begin{multlined}
                \left((1,2), (2,3)\right), \left((1,2), (3,2)\right), \vspace{-1em} \\  \left((2,1), (2,3)\right), \left((2,1), (3,2)\right), \left((2,2), (3,3)\right)             
            \end{multlined} \right\}\\
            0 &\text { otherwise.}
        \end{cases}
    \]

\end{example}

\begin{remark}[Comparison of supports]
While the Grassmannian persistence diagram in~\cref{ex: multiparameter gpd} is supported only on four segments, it still determines the birth-death induced signed persistence diagram (described in~\cref{ex: multiparameter signeg-bd-pd}) which is supported on nine segments. Contrary to the scenario illustrated in~\cref{ex: multiparameter gpd} and~\cref{ex: multiparameter signeg-bd-pd}, it is not always true that the support of a Grassmannian persistence diagram is a subset of the support of the corresponding birth-death induced signed persistence diagram. However, we anticipate that the size of the support of the Grassmannian persistence diagram will typically be smaller.
\end{remark}

\begin{remark}[Negative terms in signed persistence diagrams]
    Grassmannian persistence diagrams and birth-death induced signed persistence diagrams differ notably in their ability to capture linear dependencies between cycles. While Grassmannian persistence diagrams retain this information, signed persistence diagrams fail to do so. This inability to capture linear dependence contributes to the presence of negative terms in signed persistence diagrams. For instance, in~\cref{ex: multiparameter gpd}, the birth-death induced signed persistence diagram $\partial_{(\Seg(P),\prodord)} \left(\dim \left( \ZB_1^\Ffunc \right)\right)$ assigns $-1$ to the segment $((1,2),(3,3))$. This occurs because $\partial_{(\Seg(P),\prodord)} \left(\dim \left( \ZB_1^\Ffunc \right)\right))$ assigns $1$ to the segments $((1,2),(2,3))$ and $((1,2),(3,2))$ without capturing that the value $1$ for both segments represents the same cycles, namely $ab-ac+bc$.

\end{remark}

The birth-death-induced signed persistence diagram of a filtration $\Ffunc$ can always be recovered from its Grassmannian persistence diagram. This can be seen from the fact that Grassmannian persistence diagram is a monoidal M\"obius Inverse of the birth-death space associated with $\Ffunc$. Thus, the birth-death spaces can be recovered from the Grassmannian persistence diagram, allowing the computation of their dimension. Subsequently, M\"obius inversion can be applied to obtain the birth-death induced signed persistence diagram. In the following proposition, we give a more concrete way of obtaining birth-death induced signed persistence diagrams from  Grassmannian persistence diagrams.

\begin{proposition}[Obtaining  signed persistence diagrams from  Grassmannian diagrams]\label{prop: goi to signed pd}
    Let $\Ffrak : R \to \gr(V)$ be a monotone space function. Let $\dim \Ffrak : R \to \Z$ be given by $\dim \Ffrak (r) = \dim (\Ffrak(r))$ for every $r\in R$. Let $\partial_R (\dim \Ffrak)$ be the M\"obius inverse of $\dim \Ffrak$ and let $\goi (\Ffrak)$ be the Orthogonal Inverse of $\Ffrak$. Then, 
    \begin{equation}\label{eqn: recursive}
        \partial_R (\dim \Ffrak) (r) = \dim \left(\goi(\Ffrak)(r)\right) - \left ( \sum_{r'< r} \partial_R (\dim \Ffrak) (r') - \dim \sum_{r'<r} \goi (\Ffrak )(r')\right).
    \end{equation}

\end{proposition}

\begin{proof}
    Observe that we have the following equations
    \begin{align*}
        \partial_R (\dim \Ffrak) (r) &= \dim (\Ffrak (r)) - \sum_{r'<r} \partial_R (\dim \Ffrak) (r') \\
        \dim (\Ffrak(r)) &= \dim (\goi (\Ffrak)(r))+ \dim \sum_{r'<r} \goi (\Ffrak )(r') .
    \end{align*}
    Thus, we obtain
    \begin{align*}
        \partial_R (\dim \Ffrak) (r) &= \dim (\goi (\Ffrak)(r))+ \dim  \sum_{r'<r} \goi (\Ffrak )(r') - \sum_{r'<r} \partial_R (\dim \Ffrak) (r') \\
        &= \dim (\goi (\Ffrak)(r)) - \left ( \sum_{r'<r} \partial_R (\dim \Ffrak) (r') -  \dim  \sum_{r'<r} \goi (\Ffrak )(r') \right )
    \end{align*}
\end{proof}

Note that~\cref{prop: goi to signed pd} provides us with a recursive way of obtaining the birth-death induced signed persistence diagram of a filtration from its Grassmannian persistence diagram. Let $\Ffunc : P \to \subcx(K)$ be a filtration and let $\Ffrak := \ZB_\dgr^\Ffunc : \Seg(P) \to \gr\left( c_\dgr^K\right)$ be the $\dgr$-th birth death spaces of $\Ffunc$. For any minimal element $(m,m')\in \Seg(P)$, \cref{eqn: recursive} boils down to following.
\[
\partial_{(\Seg(P),\prodord)} (\dim \Ffrak) ((m,m')) = \dim \goi(\Ffrak)((m,m')).
\]

Then for an arbitrary $(p,p')\in \Seg(P)$, the right hand side of~\cref{eqn: recursive} only contain terms coming from the Grassmannian persistence diagram and values of birth-death induced signed persistence diagram at the segments which are strictly smaller than $(p,p')$, computed for during the recursive process.

\begin{remark}[Interpretability]\label{rem:interp}
    While Grassmannian persistence diagrams offer a straightforward interpretation in the sense of~\cref{prop: born and dead multiparameter} and~\cref{thm: completeness}, signed persistence diagrams fail to provide such a clear interpretation. The presence of negative terms is one factor contributing to the complexity of interpreting signed persistence diagrams. However, even positive terms can pose challenges in interpretation. For instance, in~\cref{ex: multiparameter gpd}, the birth-death induced signed persistence diagram $\partial_{(\Seg(P),\prodord)} \dim \left(\left( \ZB_1^\Ffunc \right)\right)$ assigns $1$ to the segment $((2,2),(3,3))$. Yet, this assignment does not capture the birth and death of a cycle for the segment $((2,2),(3,3))$. Moreover, the following example also permits concluding that, even if (either rank or birth-death induced) signed persistence diagram is non-negative everywhere, the number assigned to a segment $(b,d)$ does not necessarily correspond to the number of cycles or homology classes that are born at $b$ and die at $d$ as demonstrated in the following example.
\end{remark}

Observe that~\cref{prop: goi to signed pd} above indicates that Grassmannian persistence diagrams are stronger than birth-death induced signed persistence diagrams (and, hence, also stronger than rank induced persistence diagrams). The following example shows that Grassmannian persistence diagrams are, in fact, strictly stronger than signed persistence diagrams.

\begin{example}[Comparison of interpretability and strength]\label{ex: signed pd dont fully capture}
    Let $P = \{ p_1, p_2, p_3, p_4 \}$ be the poset with $p_1<p_3$, $p_2<p_3$, and $p_3<p_4$  depicted in~\cref{fig: filtrations for nonfully capturing}. Let $\Ffunc$ and $\Gfunc$ be the two filtrations over $P$  depicted in~\cref{fig: filtrations for nonfully capturing}. By~\cref{thm: completeness}, for the filtrations $\Ffunc$ and $\Gfunc$, we obtain the number of degree-$1$ homology classes that are born at $p_i$ and die at $p_j$ for each $(p_i,p_j)\in \Seg(P)$ as follows:
    \begin{align*}
        \dim \left(\goi \left( \ZB_1^\Ffunc  \right) \right)((p_1,p_4)) &=1 & \dim \left(\goi \left( \ZB_1^\Gfunc  \right) \right)((p_1,p_4)) &=1 \\
        \dim \left(\goi \left( \ZB_1^\Ffunc  \right) \right)((p_2,p_4)) &=1 & \dim \left(\goi \left( \ZB_1^\Gfunc  \right) \right)((p_2,p_4)) &=1 \\
        \dim \left(\goi \left( \ZB_1^\Ffunc \right) \right)((p_3,p_4))  &=2 & \dim \left(\goi \left( \ZB_1^\Gfunc  \right) \right)((p_3,p_4)) &=1. 
    \end{align*}
    In addition to the computation above, it is also clear from~\cref{fig: filtrations for nonfully capturing} that, while there is only one cycle in $\Gfunc$ that is born at $p_3$ and  dying at $p_4$, there are two linearly independent cycles in $\Ffunc$ that are born at $p_3$ and dying at $p_4$. However, the degree-$1$ birth-death induced signed persistence diagrams,~\cref{defn: signed bd-pd}, of $\Ffunc$ and $\Gfunc$ are equal (and non-negative). Hence, even when a birth-death signed persistence diagram is non-negative everywhere, the value determined by the signed persistence diagram on a segment $(b,d)\in \Seg(P)$ does not necessarily correspond to the number of cycles that are born at $b$ and die at $d$.

    Moreover, rank-induced signed persistence diagrams,~\cref{defn: signed rank-pd}, even when non-negative everywhere, also do not always capture the number of cycles that are born at $b$ and die at $d$, for  $(b, d)\in \Seg(P)$. To see this, observe that $\Ffunc$ and $\Gfunc$ have the same degree-$1$ rank invariants, and therefore the same rank-induced signed persistence diagrams (which are non-negative in this example). Below, we list the values of both the rank-induced and birth-death-induced (signed) persistence diagrams for the segments where these diagrams are nonzero:
    \begin{align*}
        \partial_{(\Seg(P),\prodord)} \left( \dim \left(\ZB_1^\Ffunc\right)\right) ((p_1,p_4)) &= \partial_{(\Seg(P),\prodord)} \left( \dim \left(\ZB_1^\Gfunc\right)\right) ((p_1,p_4)) =1 \\
        \partial_{(\Seg(P),\prodord)} \left( \dim \left(\ZB_1^\Ffunc\right)\right) ((p_2,p_4)) &= \partial_{(\Seg(P),\prodord)} \left( \dim \left(\ZB_1^\Gfunc\right)\right) ((p_2,p_4)) =1 \\
        \partial_{(\Seg(P),\prodord)} \left( \dim \left(\ZB_1^\Ffunc\right)\right) ((p_3,p_4)) &= \partial_{(\Seg(P),\prodord)} \left( \dim \left(\ZB_1^\Gfunc\right)\right) ((p_3,p_4)) =1
    \end{align*}

\begin{align*}
        \partial_{(\Seg(P),\leq_\supseteq)} \left( \beta_1^\Ffunc\right) ((p_1,p_3)) &= \partial_{(\Seg(P),\leq_\supseteq)} \left( \beta_1^\Ffunc\right) ((p_1,p_3)) =1 \\
        \partial_{(\Seg(P),\leq_\supseteq)} \left( \beta_1^\Ffunc\right) ((p_2,p_3)) &= \partial_{(\Seg(P),\leq_\supseteq)} \left( \beta_1^\Ffunc\right) ((p_2,p_3)) =1 \\
        \partial_{(\Seg(P),\leq_\supseteq)} \left( \beta_1^\Ffunc\right) ((p_3,p_3)) &= \partial_{(\Seg(P),\leq_\supseteq)} \left( \beta_1^\Ffunc\right) ((p_3,p_3)) =1
\end{align*}

\end{example}

We now revisit some of the key definitions from the modified construction in~\cite{edit-arxiv} and show that the (modified) edit distance between birth-death induced signed persistence diagrams of two filtrations is a lower bound for the edit distance between their Grassmannian persistence diagrams. Although their construction assumes the underlying poset is a finite lattice, we adapt their definitions to a more general setting by using an arbitrary finite poset.

\begin{definition}[Category of integral functions~{\cite[Definition 10.3]{edit-arxiv}}]
    We define the $\fnc$ to be the category where
    \begin{itemize}
        \item Objects are functions $\partial_R(\eta): R \to \Z$ where $R$ is any finite metric poset and $\eta : R \to \Z$ is an order-preserving function,
        \item A morphism from an object $\partial_R(\eta_1) : R_1 \to \Z$ to $\partial_Q(\eta_2) : R_2 \to \Z$ is given by a Galois connection $\ladj{f} : R_1 \leftrightarrows R_2 : \radj{f}$ such that $\left(\ladj{f}\right)_\sharp \left( \partial_{R_1}(\eta_1) \right) = \partial_{R_2}( \eta_2 )$. 
    \end{itemize}
\end{definition}

\begin{definition}[Cost of a morphism in $\fnc$]
    The cost of a morphism $(\ladj{f}, \radj{f})$ in $\fnc$ is given by $\dis(\ladj{f})$, the distortion of the left adjoint.
\end{definition}

In the following, we prove that the edit distance between birth-death induced signed persistence diagrams of two filtrations is a lower bound for the edit distance between their Grassmannian persistence diagrams. This demonstrates that Grassmannian persistence diagrams are at least as discriminative as birth-death induced signed persistence diagrams.
\begin{theorem}[Lower bound]\label{thm: edit between signed is lower bound for edit between grassmannian}
    Let $\Ffunc : P \to \subcx(K)$ and $\Gfunc : Q \to \subcx(K)$ be two filtrations in $\mpfil(K)$. Then, for any degree $\dgr\geq 0$, we have
    \[
    d_{\fnc}^E \Big( \partial_{(\Seg(P), \prodord)} \left( \dim \left( \ZB_\dgr^\Ffunc \right) \right), \partial_{(\Seg(Q), \prodord)} \left( \dim \left( \ZB_\dgr^\Gfunc \right) \right) \Big) \leq d_{\dtfnc\left(C_\dgr^K\right)}^E \Big(\goi \left( \ZB_\dgr^\Ffunc \right), \goi \left(\ZB_\dgr^\Gfunc \right)\Big).
    \]
\end{theorem}

\begin{remark}[Improved discriminating power]\label{rmk:kracht}
Observe that the filtrations depicted in~\cref{fig: filtrations for nonfully capturing} have the same (both rank and birth-death induced) signed persistence diagrams, as demonstrated in~\cref{ex: signed pd dont fully capture}. Consequently, the term on the left in~\cref{thm: edit between signed is lower bound for edit between grassmannian} equals to 0. On the other hand, the Grassmannian persistence diagrams of these filtrations are distinct and the edit distance between them remains positive. This indicates that the inequality in~\cref{thm: edit between signed is lower bound for edit between grassmannian} can indeed be strict. Hence, Grassmannian persistence diagrams are strictly more discriminative than signed persistence diagrams.
\end{remark}

To prove~\cref{thm: edit between signed is lower bound for edit between grassmannian}, we first revisit some constructions and results from from~\cite{edit-arxiv}.

\begin{definition}[Category of monotone integral functions~{\cite[Definition 5.5]{edit-arxiv}}]
    We define $\monfnc$ to be the category where
    \begin{itemize}
        \item Objects are order-preserving functions $\nu : R \to \Z$ where $R$ is any finite metric poset,
        \item A morphism from an object $\nu_1 : R_1 \to \Z$ to $\nu_2 : R_2 \to \Z$ is given by a Galois connection $\ladj{f} : R_1 \leftrightarrows R_2 : \radj{f}$ such that $\nu_2 = \left(\radj{f}\right)^\sharp(\nu_1) = \nu_1 \circ \radj{f}$.
    \end{itemize}
\end{definition}

\begin{definition}[Cost of a morphism in $\monfnc$]
    The cost of a morphism $(\ladj{f}, \radj{f})$ in $\monfnc$ is given by $\dis(\ladj{f})$, the distortion of the left adjoint.
\end{definition}

\begin{lemma}[{\cite[Lemma 10.9]{edit-arxiv}}]\label{lemma: edit-arxiv isometry}
    Let $\nu_1 : R_1 \to \Z$ and $\nu_2 : R_2 \to \Z$ be objects in $\monfnc$. Then,
    \[
    d_{\monfnc}^E(\nu_1, \nu_2) = d_{\fnc}^E (\partial_{R_1}(\nu_1), \partial_{R_2}(\nu_2)).
    \]
\end{lemma}

\begin{proof}[Proof of~\cref{thm: edit between signed is lower bound for edit between grassmannian}]
    Let $\Ffunc : P \to \subcx(K)$ and $\Gfunc : Q \to \subcx(K)$ be filrations in $\mpfil(K)$.
    Observe that 
    \begin{align*}
        \dim : \mon(C_\dgr^K) &\to \monfnc \\
                       \mathfrak{F} &\mapsto \dim(\mathfrak{F})
    \end{align*} 
    is a functor that preservers the cost of morphisms. Hence, for any $\Ffrak$ and $\Gfrak$ in $\mon\left(C_\dgr^K\right)$, we have that 
    \[
    d_{\monfnc}^E(\dim(\Ffrak), \dim(\Gfrak)) \leq d_{\mon\left( C_\dgr^K \right)} \left(\Ffrak, \Gfrak \right).
    \]
    In particular, we have
    \[
    d_{\monfnc}^E\left(\dim\left(\ZB_\dgr^\Ffunc\right), \dim\left(\ZB_\dgr^\Gfunc \right) \right)\leq d_{\mon\left( C_\dgr^K \right)} \left(\ZB_\dgr^\Ffunc, \ZB_\dgr^\Gfunc\right).
    \]
    Combining the inequality above with our isometry result (\cref{thm: isometry of goi}) and~\cref{lemma: edit-arxiv isometry}, we obtain the desired in equality:
    \begin{align*}
        d_{\fnc}^E \left( \partial_{(\Seg(P), \prodord)} \left( \dim \left( \ZB_\dgr^\Ffunc \right) \right), \partial_{(\Seg(Q), \prodord)} \left( \dim \left( \ZB_\dgr^\Gfunc \right) \right) \right) &= d_{\monfnc}^E \left( \dim \left( \ZB_\dgr^\Ffunc \right) , \dim \left( \ZB_\dgr^\Gfunc\right)\right) \\
        &\leq d_{\mon \left( C_\dgr^K \right)}^E \left( \ZB_\dgr^\Ffunc, \ZB_\dgr^\Gfunc \right) \\
        &= d_{\dtfnc \left( C_\dgr^K \right)}^E \left( \goi \left( \ZB_\dgr^\Ffunc\right)  , \goi \left( \ZB_\dgr^\Gfunc \right) \right).
    \end{align*}
\end{proof}

\section{M\"obius Homology and Grassmannian Persistence Diagrams}\label{sec: pers mob hom}

\medskip

In this section, we establish a direct connection with the notion of  Möbius Homology from~\cite{patel-skraba-mobius} which we apply in order to associate a family \{$H_s^\downarrow \ZB_\rho^F:\Seg(P)\to\Vec \}_{s \in \N}$ of modules to the $\rho$-th birth-death space of a given simplicial filtration $\Ffunc:P\to \subcx(K)$. This family is indexed by a nonnegative integer $s$, which we refer to as the \emph{stratum}. We prove that the stratum-$0$ persistent Möbius homology of a filtration coincides with the Grassmannian persistence diagram of the filtration.

\begin{theorem}\label{prop: grade 0 pers mob hom isomorphic to gpd}
Let $\Ffunc : P \to \subcx(K)$ be a filtration. Let $\dgr\geq 0$ be an integer. Then, for every $(b,d) \in \Seg(P)$, we have
    \[
    H_0^\downarrow \ZB_\dgr^\Ffunc ((b,d)) \cong \goi \left(ZB_\dgr^\Ffunc\right) ((b,d)).
    \]
\end{theorem}

\begin{remark}

In contrast to $H_0^\downarrow \ZB_\dgr^\Ffunc ((b,d))$, our proposed approach, $\goi \left(ZB_\dgr^\Ffunc\right) ((b,d))$, always yields a subspace of the $\dgr$-th chain space of $K$ that consists of cycles. This essential feature, enabled by our deliberate incorporation of an inner product structure, not only distinguishes our method but also serves as a cornerstone of our framework. More precisely, \cref{prop: born and dead multiparameter} utilizes this intrinsic feature to  establish the interpretability of Grassmannian persistence diagrams.

\end{remark}

The preceding theorem and \cref{prop: born and dead multiparameter} immediately yield the following corollary, which offers a novel interpretation of the stratum‑0 of persistent Möbius homology—a perspective that is absent from \cite{patel-skraba-mobius}.

\begin{corollary}\label{coro:MH-ex}
    Let $\Ffunc : P \to \subcx(K)$ be a filtration. Let $\dgr\geq 0$ be an integer. Then, for every $(b,d) \in \Seg(P)$, $\dim\left(H_0^\downarrow \ZB_\dgr^\Ffunc ((b,d))\right)$ equals the number of independent $\dgr$-cycles that are born at $b$ and die at $d$.

\end{corollary}

In~\cref{subsec: simplicial cosheaves,subsec: order cosheaf and mob hom of modules}, we recall the necessary background for constructing the stratum-$s$ M\"obius homology $H_s^\downarrow N$ of a module $N:R\to \Vec$.\footnote{Note that \cref{prop: grade 0 pers mob hom isomorphic to gpd} applies this construction to the $\rho$-th birth-death functor of a given filtration $\Ffunc$ so that the poset $R$ will be chosen as $\Seg(P)$ whereas $\ZB_\rho^\Ffunc$ will play the role of the module $N$.} There is a slight (but eventually immaterial) difference with the approach taken by Patel and Skraba that we explain in \cref{subsec: pers mob hom of modules}.

\begin{framed}
    In the following subsections, we will utilize three different simplicial complexes, denoted by $K$ and $L'\subseteq L$. Consistent with the notation used in previous section, the complex $K$ will denote a simplical complex that is filtered i.e., $\Ffunc : P \to \subcx(K)$ is a filtration. On the other hand, $L$ (and $L'$) will serve as the domain of simplicial cosheafs. Specifically, $L$ will become the order complex of $\Seg(P)$ when defining (persistent) Möbius homology in~\cref{defn: mob chain comp and mob hom,defn: pers mob hom of filtrations}.
\end{framed}

\subsection{Simplicial Cosheaves}\label{subsec: simplicial cosheaves}
Let $L$ be a finite oriented simplicial complex. Consider the poset category of $L$ where there is a unique morphism $\tau \to \sigma$ whenever $\tau$ is a coface of $\sigma$ (i.e., $\tau \geq \sigma$). We use the notation $\tau >_1 \sigma$ to mean $\tau$ is a coface of $\sigma$ and $\dim (\tau) = \dim(\sigma)+1$. 
 
\begin{definition}[Simplicial cosheaf]
    A \emph{simplicial cosheaf} over $L$ is any functor $$\underline{A} : L \to \Vec.$$
\end{definition}

\begin{definition}[$s$-th chain space of a simplicial cosheaf]
    For any stratum $s\geq 0$ (i.e. $s\in \N$), the \emph{$s$-th chain space} of a simplicial cosheaf $\underline{A} : L\to \Vec$ is 
    \[
    C_s(L, \underline{A}) := \bigoplus_{\sigma: \dim(\sigma) = s} \underline{A}(\sigma)
    \]
\end{definition}

\begin{definition}[Boundary operator between chain complexes of a simplicial cosheaf]
    The \emph{boundary operator} $\partial_s : C_s(L; \underline{A}) \to C_{s-1}(L; \underline{A})$ is defined summand-wisely as follows. For an $s$-simplex $\tau$, let 
    $$\iota_\tau : \underline{A}(\tau) \hookrightarrow C_s(L, \underline{A}) = \bigoplus_{\sigma: \dim(\sigma) = s} \underline{A}(\sigma)$$
    be the canonical inclusion of the summand $\underline{A}(\tau)$ into the direct sum $\bigoplus_{\sigma: \dim(\sigma) = s}\underline{A}(\sigma)$ and let 
    $$\pi_\tau : C_s(L, \underline{A}) = \bigoplus_{\sigma: \dim(\sigma) = s}\underline{A}(\sigma) \twoheadrightarrow \underline{A}(\tau)$$ be the canonical surjection from the direct sum to its summand $\underline{A}(\tau)$. The restriction of $\partial_s$ to $\tau$ is defined as
    \[
    \partial_s|_\tau := \sum_{\sigma: \tau>_1 \sigma} [\tau:\sigma]\cdot \big(\iota_\sigma \circ \underline{A}(\tau \geq \sigma)\circ \pi_\tau\big),
    \]
    where $[\tau:\sigma] =1$ if the restriction of the orientation of $\tau$ to $\sigma$ agrees with the orientation of $\sigma$ and  $[\tau: \sigma] = -1$ otherwise.
    Then, the  boundary operator is defined as the sum
    \[
    \partial_s := \sum_{\tau: \dim(\tau)=s} \partial_s|_\tau.
    \]
\end{definition}

The boundary operator $\partial_s : C_s(L; \underline{A}) \to C_{s-1}(L; \underline{A})$ satisfies $\partial_{s-1} \circ \partial_s = 0$. Hence, we obtain the chain complex

\[
\big(C_\bullet(L, \underline{A}),\partial_\bullet\big) : \cdots \xrightarrow[]{\partial_3} C_2(L, \underline{A}) \xrightarrow[]{\partial_2} C_1(L, \underline{A}) \xrightarrow[]{\partial_1} C_0(L, \underline{A}) \xrightarrow[]{\partial_0} 0.
\]

\begin{definition}[Cosheaf homology of a simplicial cosheaf]
    The $s$-th homology of the chain complex $(C_\bullet(L, \underline{A}), \partial_\bullet)$ is called the \emph{$s$-th cosheaf homology} of $\underline{A}$ and  is denoted by $H_s (L, \underline{A})$. 
\end{definition}

Let $L' \subseteq L$ be a subcomplex, and let $\underline{B} : L' \to \Vec$ be the restriction of the cosheaf $\underline{A}: L \to \Vec$ to $L'$. Then, we obtain the associated relative chain complex
\[
\big(C_\bullet(L, L'; \underline{A}),\partial_\bullet\big) : \cdots \xrightarrow[]{\partial_3} \frac{C_2(L; \underline{A})}{C_2(L'; \underline{B})} \xrightarrow[]{\partial_2} \frac{C_1(L; \underline{A})}{C_1(L'; \underline{B})} \xrightarrow[]{\partial_1} \frac{C_0(L; \underline{A})}{C_0(L'; \underline{B})} \xrightarrow[]{\partial_0} 0.
\]

\begin{definition}[Relative cosheaf homology]\label{defn: relative cosheaf homology}
    The \emph{$s$-th relative cosheaf homology} of $\underline{A}$ with respect to the subcomplex $L' \subseteq L$, denoted $H_s (L, L'; \underline{A})$ is the $s$-th homology of the chain complex $\big(C_\bullet(L, L'; \underline{A}), \partial_\bullet\big)$.
\end{definition}

\begin{figure}
\includegraphics[width=\linewidth]{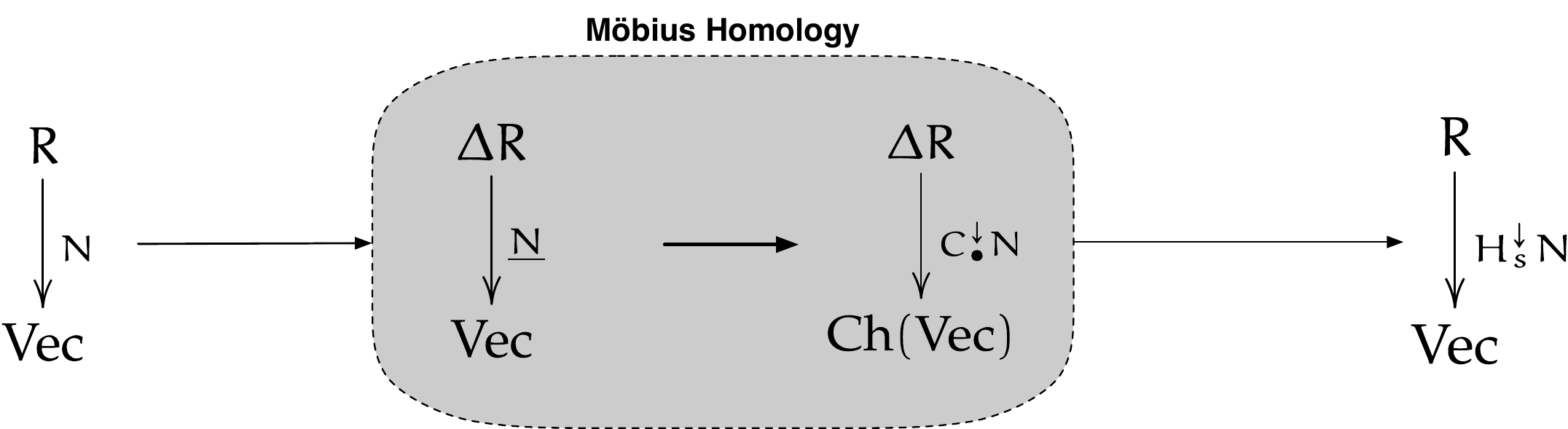}
\caption{M\"obius homology applies to modules $N:R\to \Vec{}$. See \cref{defn: mob chain comp and mob hom}.}\label{fig:mh}
\end{figure}
\subsection{The order Cosheaf and M\"obius Homology of Modules $N:R\to \Vec{}$}\label{subsec: order cosheaf and mob hom of modules}

A chain of length $n$ in a poset $R$ is a sequence $a_0 < a_1 < \cdots < a_n$ of $n+1$ distinct elements of $R$. A subchain of a chain $a_0 < a_1 < \cdots < a_n$ is chain obtained by deleting any number of its elements.

\begin{definition}[Order complex]
    Let $R$ be a finite poset. The \emph{order complex} of $R$, denoted $\Delta R$, is the simplicial complex whose $i$-simplices are chains of length $i$. A simplex $a_0 < a_1 < \cdots < a_i$ is a coface of a simplex $b_0 < b_1 < \cdots < b_j$ if the latter is a subchain of the former.
\end{definition}
\nomenclature[50]{$\Delta R$}{The order complex of a poset $R$}

Let $\min$, $\max$ $: \Delta R \to R$ be the functions that respectively assign to a simplex $\sigma$ of $\Delta R$ the minimal and maximal elements in the  chain corresponding to $\sigma$. 

\begin{definition}[Order cosheaf]
    Let $N : R \to \Vec$ be a module. The \emph{order cosheaf} of $N$ is the simplicial cosheaf $\underline{N} : \Delta R \to \Vec$ defined as the following composition
    \[
    \Delta R \xrightarrow[]{\min} R \xrightarrow[]{N} \Vec.
    \]
\end{definition}

\begin{definition}[Lower complex]
    Let $R$ be a finite poset. For any $r\in R$, the \emph{lower complex} of $r$ is the subcomplex $\Delta R_{\leq r} := \{ \sigma \in \Delta R \mid \max(\sigma) \leq r \}$ of $\Delta R$. The \emph{strict lower complex} of $r$ is the subcomplex $\Delta R_{\leq r} := \{ \sigma \in \Delta R \mid \max(\sigma) < r \}$ of $\Delta R$.
\end{definition}
\nomenclature[51]{$\Delta R_{\leq r}$}{The lower complex of $r\in R$.}
\nomenclature[52]{$\Delta R_{< r}$}{The strict lower complex of $r\in R$.}

\begin{definition}[M\"obius chain complex and M\"obius homology~{\cite[Definition 3.8]{patel-skraba-mobius}}]\label{defn: mob chain comp and mob hom}
    The \emph{M\"obius chain complex module} of $N : R \to \Vec$, denoted $C_\bullet^\downarrow N:R\to \mathrm{Ch}(\Vec{})$, is the functor defined as follows:
    \begin{align*}
        C_\bullet^\downarrow N(r) &:= C_\bullet \big(\Delta R_{\leq r}, \Delta R_{<r}; \underline{N}\big) \,\,\,\mbox{for $r\in R$}, \\
        &\mbox{and}\\
        C_\bullet^\downarrow N(r\leq r') &:= C_\bullet \big(\Delta R_{\leq r}, \Delta R_{<r}; \underline{N}\big) \to C_\bullet \big(\Delta R_{\leq r'}, \Delta R_{<r'}; \underline{N}\big)\,\,\,\mbox{for $r\leq r' \in R$}
    \end{align*}

    where the morphism on the r.h.s. is the one induced by the inclusion of pair of complexes $$\big(\Delta R_{\leq r}, \Delta R_{<r}\big) \hookrightarrow \big(\Delta R_{\leq r'}, \Delta R_{<r'}\big).$$ Applying the homology functor to $C_\bullet^\downarrow N$, we obtain another functor for every stratum $s\geq 0$, called the \emph{stratum-$s$  M\"obius homology module}\footnote{Or \emph{$s$-th stratum   M\"obius homology module}.} of $N$, denoted by $H_s^\downarrow N : R \to \Vec$. See  \cref{fig:mh}.
\end{definition}

\nomenclature[53]{$C_\bullet^\downarrow N$}{The M\"obius chain complex of a module $N$.}
\nomenclature[54]{$H_s^\downarrow N$}{The stratum-$s$ (or $s$-th stratum) M\"obius homology of the module $N$.}

\subsection{Persistent M\"obius Homology of Filtrations and  Proof of \cref{prop: grade 0 pers mob hom isomorphic to gpd}}\label{subsec: pers mob hom of filtrations and relation}

In this section,  we  prove \cref{prop: grade 0 pers mob hom isomorphic to gpd} --the main result in  \cref{sec: pers mob hom}-- stating that the stratum-$0$ persistent Möbius homology of a filtration produces vector spaces isomorphic to those determined by the Grassmannian persistence diagram of the filtration.

\begin{definition}[Persistent M\"obius homology of filtrations]\label{defn: pers mob hom of filtrations}
    Let $\Ffunc : P \to \subcx(K)$ be a filtration. We define the \emph{stratum-$s$, degree-$\dgr$ persistent M\"obius homology module} of $\Ffunc$ as the  $H_s^\downarrow \ZB_\dgr^\Ffunc : \Seg(P) \to \Vec$. See \cref{fig:mh-F}. We also refer to $H_s^\downarrow \ZB_\dgr^\Ffunc$ as the $(\rho,s)$-persistent M\"obius homology of $\Ffunc$. 
\end{definition}

\begin{figure}
\includegraphics[width=\linewidth]{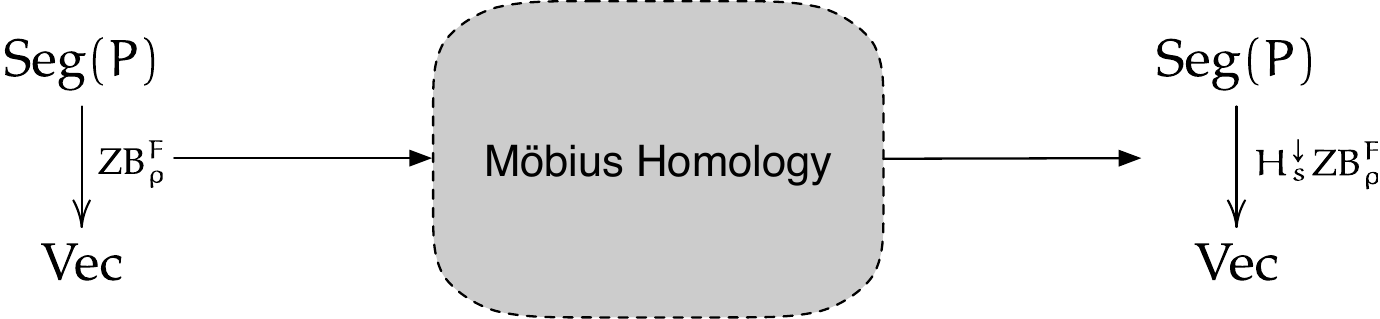}
\caption{The $(\rho,s)$-\emph{persistent} M\"obius homology of a filtration  $\Ffunc:P\to \subcx{(K)}$ is defined as the $s$-th M\"obius homology of the module $\ZB^F_\rho:\Seg(P)\to \Vec$. See \cref{defn: pers mob hom of filtrations}.}\label{fig:mh-F}
\end{figure}

\begin{proof}[Proof of \cref{prop: grade 0 pers mob hom isomorphic to gpd}]
    By~\cref{defn: mob chain comp and mob hom}, $H_0^\downarrow \ZB_\dgr^\Ffunc ((b,d))$ is the $0$-th homology of the relative chain complex $C_\bullet \left(\Delta \Seg(P)_{\leq (b,d)}, \Delta \Seg(P)_{< (b,d)}; \underline{\ZB_\dgr^\Ffunc} \right)$. Unraveling the definitions, observe that 
    \begin{enumerate}
        \item $C_0 := C_0 \left(\Delta \Seg(P)_{\leq (b,d)}, \Delta \Seg(P)_{< (b,d)}; \underline{\ZB_\dgr^\Ffunc} \right) = \frac{\oplus_{(a,c)\leq (b,d)}\ZB_\dgr^\Ffunc ((a,c))}{\oplus_{(a,c)< (b,d)}\ZB_\dgr^\Ffunc ((a,c))} \cong \ZB_\dgr^\Ffunc ((b,d))$,
        \item $C_1 := C_1 \left(\Delta \Seg(P)_{\leq (b,d)}, \Delta \Seg(P)_{< (b,d)}; \underline{\ZB_\dgr^\Ffunc} \right) \\ = \frac{\left(\oplus_{(a,c)<(b,d)} \ZB_\dgr^\Ffunc((a,c))\right) \oplus \left(\oplus_{I < J <(b,d)} \ZB_\dgr^\Ffunc(I)\right)}{\left(\oplus_{I < J <(b,d)} \ZB_\dgr^\Ffunc(I)\right)} \\ \cong \oplus_{(a,c)<(b,d)} \ZB_\dgr^\Ffunc((a,c))$
        \item The boundary operator $C_1 \cong \oplus_{(a,c)<(b,d)} \ZB_\dgr^\Ffunc((a,c))  \xrightarrow[]{\partial_1} C_0 \cong \ZB_\dgr^\Ffunc ((b,d))$ is given by component-wise inclusions.
    \end{enumerate}
    Hence, we obtain that $\Ima(\partial_1) = \sum_{(a,c)<(b,d)} \ZB_\dgr^\Ffunc((a,c))$. Therefore,
    \begin{align*}
        H_0^\downarrow \ZB_\dgr^\Ffunc ((b,d)) = \frac{C_0}{\Ima(\partial_1)} &= \frac{\ZB_\dgr^\Ffunc ((b,d))}{\sum_{(a,c)<(b,d)} \ZB_\dgr^\Ffunc ((a,c))} \\ &\cong \ZB_\dgr^\Ffunc ((b,d)) \cap \left(\sum_{(a,c)<(b,d)} \ZB_\dgr^\Ffunc ((a,c))\right)^\perp \\ &= \goi \left(\ZB_\dgr^\Ffunc\right) ((b,c)),        
    \end{align*}
    where the last equality follows from the definition of $\goi$ (\cref{defn: goi}). 
\end{proof}

\section{Discussion}\label{sec:disc}

We introduced the concept of Orthogonal Inversion and demonstrated its utility in deriving easily interpretable persistence diagrams for multiparameter filtrations. Orthogonal Inversion  critically relies on the inner product structure on $V$ enabling it to capture significantly finer information than integer-valued persistence diagrams. We believe that, as a consequence, Grassmannian persistence diagrams have the potential to contribute to a range of practical applications and data analysis tasks. 
In this section, we outline several key considerations that may motivate further research in this area.
 
\paragraph{Stability.} When comparing two filtrations and their respective Grassmannian persistence diagrams (\cref{subsec: stability of grassmannian pd}), we are required that there exists a fixed simplicial complex $K$ that each filtration is defined over. It is  natural to seek  a framework that can handle filtrations over different vertex sets. 

\paragraph{Orthomodular lattices.} While the motivation behind the concept of orthogonal inversions primarily stems from its applications in TDA (as in~\cref{subsec: gpd for multiparameter}), there is an inherent interest in broadening the utility of orthogonal inversions beyond the scope of TDA by potentially utilizing the notion of Orthomodular Inversion developed in~\cref{sec: orthomodular inversion sec}.

\paragraph{Functoriality.} We have extended the functoriality of classical M\"obius inversion (see~\cite[Proposition 6.6]{edit} and~\cite[Proposition 5.9]{gal-conn}) by demonstrating that assigning a monoidal M\"obius inverse to a monotone-space function via Orthogonal Inversion retains functoriality; see~\cref{thm: goi functor and stable}. However, since monoidal M\"obius inverses are not necessarily unique, a natural question arises: is the process of assigning the set of all monoidal M\"obius inverses of a given function itself functorial? 

\paragraph{Persistent M\"obius Homology.} We have shown that the stratum-$0$ persistent Möbius homology of a filtration produces spaces that are isomorphic to those determined by the Grassmannian persistence diagram of the filtration.  A natural next step is to explore the potential construction of higher order Grassmannian persistence diagrams that could be related to persistent Möbius homology of the filtration in strata 1 and above.

\phantomsection
\addcontentsline{toc}{section}{References}
\bibliographystyle{alpha}
\bibliography{ref}{}

\appendix

\section{Grothendieck Group Completion}\label{appendix:details}
The details below pertain to the discussion on page \pageref{pg:mi}. \medskip

Let $(\mathcal{M}, + , 0)$ be a commutative monoid. Consider the equivalence relation $\sim$ defined on $\mathcal{M}\times \mathcal{M}$ given by
\[
(m_1, n_1) \sim (m_2, n_2) \iff \text{ there exists } k\in \mathcal{M} \text{ such that } m_1 + n_2 + k = m_2 + n_1 +k.
\]
We denote by $[(m_1,n_1)]$ the equivalence class containing $(m_1, n_1)$. Let $\kappa (\mathcal{M}) := \mathcal{M} \times \mathcal{M} / \sim$ be the set of equivalence classes of $\sim$. $\kappa(\mathcal{M})$ inherits the binary operation of $\mathcal{M}$
\[
+ : \kappa(\mathcal{M}) \times \kappa (\mathcal{M}) \to \kappa(\mathcal{M})
\]
by applying it component-wisely  
\begin{align*}
        [(m_1, n_1)] + [(m_2, n_2)] := [(m_1+m_2, n_1 +n_2)].
\end{align*}
The tuple $(\kappa(\mathcal{M}), +, [(0,0)])$ determines an abelian group, called 
the~\emph{Grothendieck group completion of $\mathcal{M}$}. Observe that there is a canonical morphism 
\begin{align*}
    \varphi_\mathcal{M} : \mathcal{M} &\to \kappa (\mathcal{M}) \\
    m           &\mapsto [(m,0)].
\end{align*}

\begin{definition}[Absorbing element]
    An element $\infty_\mathcal{M} \in \mathcal{M}$ is called an~\emph{absorbing element} if $m+\infty_\mathcal{M} = \infty_\mathcal{M}$ for every $m\in \mathcal{M}$.
\end{definition}

\begin{proposition}\label{prop: absorbing implies trivial}
    Let $\mathcal{M}$ be a commutative monoid with an absorbing element $\infty_\mathcal{M}$. Then, the Grothendieck group completion of $\mathcal{M}$ is the trivial group.
\end{proposition}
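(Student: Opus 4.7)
The plan is to show that every equivalence class in $\kappa(\mathcal{M})$ collapses to $[(0,0)]$ by witnessing the collapse with the absorbing element. By definition of the equivalence relation on $\mathcal{M} \times \mathcal{M}$, the class $[(m,n)]$ equals $[(0,0)]$ precisely when there exists $k \in \mathcal{M}$ with $m + 0 + k = 0 + n + k$, that is, $m + k = n + k$.

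First I would fix an arbitrary pair $(m,n) \in \mathcal{M} \times \mathcal{M}$ and take $k := \infty_\mathcal{M}$ as the witness. Using the absorbing property twice, I get $m + \infty_\mathcal{M} = \infty_\mathcal{M} = n + \infty_\mathcal{M}$, which is exactly the required equality. Hence $(m,n) \sim (0,0)$, so $[(m,n)] = [(0,0)]$ in $\kappa(\mathcal{M})$.

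Since the choice of $(m,n)$ was arbitrary, every element of $\kappa(\mathcal{M})$ equals the identity $[(0,0)]$, so $\kappa(\mathcal{M})$ is the trivial group. There is no genuine obstacle here: the absorbing element $\infty_\mathcal{M}$ serves as a universal cancellation witness, which is the entire content of the argument. The only subtlety worth a brief remark is that the proof uses the ``extra $k$'' built into the Grothendieck equivalence relation in an essential way (plain equality $m = n$ need not hold); this is exactly why the $k$ in the definition of $\sim$ is permitted, and is what makes the absorbing element immediately trivialize the completion.
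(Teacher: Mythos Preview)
Your proof is correct and essentially the same as the paper's: both use $\infty_\mathcal{M}$ as the witness $k$ in the equivalence relation to collapse all classes. The only cosmetic difference is that the paper shows any two pairs $(m_1,n_1)\sim(m_2,n_2)$ directly, while you show every pair is equivalent to $(0,0)$; these are obviously equivalent formulations.
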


\begin{proof}
    Let $(m_1, n_1), (m_2, n_2) \in \mathcal{M} \times \mathcal{M}$. Observe that $(m_1, n_1) \sim (m_2, n_2)$ because
    \[
    m_1 + n_2 + \infty_\mathcal{M} = \infty_\mathcal{M} = m_2 + n_1 + \infty_\mathcal{M}.
    \]
    As $(m_1, n_1), (m_2, n_2) \in \mathcal{M} \times \mathcal{M}$ were arbitrary, we conclude that there is only one equivalence class. Namely, $\kappa(\mathcal{M}) = \{ [(0,0)] \}$.
\end{proof}

\begin{corollary}
    The Grothendieck group completion of $\gr(V)$ is the trivial group.
\end{corollary}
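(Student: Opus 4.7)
The plan is to invoke \cref{prop: absorbing implies trivial} directly, so the only task is to exhibit an absorbing element in the commutative monoid $(\gr(V), +, \{0\})$. The natural candidate is the ambient space $V$ itself, which is a member of $\gr(V)$ since $V$ is trivially a subspace of itself.

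First, I would verify that $V$ is indeed absorbing: for every $W \in \gr(V)$, the sum of subspaces satisfies $W + V \subseteq V$ (since both $W$ and $V$ sit inside $V$) and $V \subseteq W + V$ (by writing any $v \in V$ as $0 + v$), so $W + V = V$. Hence $V$ plays the role of $\infty_{\gr(V)}$ in the sense of the preceding definition.

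With this observation in hand, the conclusion is immediate: by \cref{prop: absorbing implies trivial}, the Grothendieck group completion $\kappa(\gr(V))$ collapses to the trivial group $\{[(0,0)]\}$. There is no real obstacle here — the entire content of the corollary is the recognition that $V$ itself serves as an absorbing element, after which the previous proposition does all the work.
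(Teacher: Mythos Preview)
Your proof is correct and takes essentially the same approach as the paper: identify $V \in \gr(V)$ as an absorbing element and invoke \cref{prop: absorbing implies trivial}. The paper's proof is a one-liner stating exactly this, while you additionally spell out why $W + V = V$ for every $W \in \gr(V)$.
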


\begin{proof}
    $V \in \gr(V)$ is an absorbing element. The result follows from~\cref{prop: absorbing implies trivial}.
\end{proof}

\section{An isomorphism result}\label{subsec: pers mob hom of modules}

In contrast with our approach (i.e. \cref{defn: pers mob hom of filtrations}), in~\cite{patel-skraba-mobius}, Patel and Skraba do not directly consider simplicial filtrations $\Ffunc:P\to \subcx(K)$ and,  instead, define  persistent Möbius homology of  modules $M:P\to \Vec$ by first considering a free presentation $\Phi:X\Rightarrow M$ of $M$ which they use to induce an algebraic proxy $\ZB_\Phi$  for the unavailable birth-death space $\ZB^\Ffunc_\rho$ of $\Ffunc$.  In this section, we verify that our approach, consisting of applying M\"obius homology to $\ZB^\Ffunc_\rho$ (as depicted in \cref{fig:mh-F}), agrees with the result of applying M\"obius homology to an algebraic proxy $\ZB_\Phi$, where $\Phi$ is an arbitrary free resolution of $M:=H_\rho(F)$.

Before presenting our result, we highlight some of the key insights from~\cite{patel-skraba-mobius} as well as our strategy:

\begin{enumerate}
    \item The definition of persistent M\"obius homology of a module $M : P \to \Vec$ initially depends on a choice of free presentation of $M$; see~\cite[Definition 5.8]{patel-skraba-mobius}.
    \item Patel and Skraba show that, while the choice of the free presentation  $\Phi$ may affect the algebraic proxy $\ZB_{\Phi}$, the persistent M\"obius homology spaces $H_s^\downarrow\ZB_{\Phi} ((b,d))$ are independent of $\Phi$ for $(b,d)\in \Seg(P)\setminus\diag(P)$; see~\cite[Section 5.1]{patel-skraba-mobius}.
    \item This result is established in~\cite[Proposition 5.12]{patel-skraba-mobius}, which states that, for any free presentation $\Phi$, $H_s^\downarrow\ZB_{\Phi}$ is isomorphic to an intrinsic object, namely, the M\"obius homology of the kernel module of $M$ (\cref{defn: kernel module}), on  non-diagonal segments. 
    \item Consequently, this suggests that persistent M\"obius homology of a module M could have been defined intrinsically through its kernel module, thus bypassing the need for a free presentation.
\end{enumerate}

Therefore, rather than focusing on the definition of persistent Möbius homology that is given via free presentations, we leverage this intrinsic characterization and demonstrate that our approach—applying Möbius homology directly to the birth-death space—naturally aligns with the one obtained from the kernel module. To be more precise, our main result in this section is the following.

\begin{theorem}\label{thm: mob hom ker is mob hom fil}
    \label{cor: mod or filtration gives the same}
    Let $\Ffunc : P \to \subcx(K)$ be a filtration. Let $\dgr\geq 0$ and $s\geq 0$ be any integers. Let $M:= H_\dgr(\Ffunc)$ be the persistence module obtained from $\Ffunc$ by applying the degree-$\dgr$ homology functor. Let $K_M$ be the kernel module of $M$. Then, for every $(b,d) \in \Seg(P)$ with $b<d$, we have $$H_s^\downarrow ZB_\dgr^\Ffunc ((b,d)) \cong H_s^\downarrow K_M ((b,d)).$$
\end{theorem}

We now recall from \cite{patel-skraba-mobius} the definition of the kernel module of a persistence module $M : P \to \Vec$, which will be a module defined on $\overline{P}^\times = (\Seg(P), \prodord)$ (recall the definitions of $\Seg(P)$ and the product order $\prodord$ in~\cref{parag: poset of int}), and then proceed to prove~\cref{thm: mob hom ker is mob hom fil}.

\begin{definition}[Kernel module~{\cite[Definition 5.11]{patel-skraba-mobius}}]\label{defn: kernel module}
    Let $M : P \to \Vec$ be a persistence module. The \emph{kernel module} of $M$, denoted 
    $$K_M : \overline{P}^\times \to \Vec$$ is defined as follows. For every segment $(p, p')$, where $p' \neq \infty$, $K_M$ assigns the kernel of the morphism $M(p\leq p')$. For segments of the form $(p, \infty)$, $K_M$ assigns the vector space $M(p)$. For every $(p_1, p_2)\leq (p_3, p_4) \in \Seg(P)$, $K_M$ assigns the appropriate composition of morphisms below, depending on whether $p_2$ or $p_4$ is $\infty$:
    \begin{center}
        \begin{tikzcd}
\ker(M(p_1\leq p_2)) \arrow[d, hook] \arrow[rr, dashed] &  & \ker(M(p_3\leq p_4)) \arrow[d, hook] \\
M(p_1) \arrow[rr, "M(p_1\leq p_3)"]                     &  & M(p_3)                              
\end{tikzcd}
    \end{center}
\end{definition}

We now present the proof of~\cref{cor: mod or filtration gives the same}. This proof is similar in structure to the approach adopted in~\cite[Proposition 5.12]{patel-skraba-mobius}.

\begin{proof} [Proof of \cref{cor: mod or filtration gives the same}]
    Let $D_\dgr^\Ffunc : \overline{P}^\times \to \Vec$ be the module that assigns to every segment $(b,d)$ the birth-death space $\ZB_\dgr^\Ffunc ((b,b))$ and to every pair $(b_1, d_1) \leq (b_2, d_2) \in \Seg(P)$ the canonical inclusion $\ZB_\dgr^\Ffunc(b_1, b_1) \hookrightarrow \ZB_\dgr^\Ffunc(b_2, b_2)$. As explained in \cite[Lemma 5.14]{patel-skraba-mobius}, $H_s^\downarrow D_\dgr^\Ffunc ((b,d)) = 0$, for every segment $(b,d) \in \Seg(P)$ with $b<d$, and for every $s\geq 0$. 

    Observe that the modules $D_\dgr^\Ffunc$, $\ZB_\dgr^\Ffunc$, and $K_M$ fit into a short exact sequence:
    \[
    0 \to D_\dgr^\Ffunc \to \ZB_\dgr^\Ffunc \to K_M \to 0.
    \]
    For every $(b,d)\in \Seg(P)$, this short exact sequence of modules gives rise to a short exact sequence of M\"obius chain complexes: 
    \[
    0 \to C_\bullet^\downarrow D_\dgr^\Ffunc (b,d) \to C_\bullet^\downarrow \ZB_\dgr^\Ffunc ((b,d)) \to C_\bullet^\downarrow K_M ((b,d)) \to 0,
    \]
    which, in turn, gives rise to a long exact sequence of M\"obius homology vector spaces:
    \begin{center}
        \begin{tikzcd}
\cdots \arrow[r] & {H_{s+1}^\downarrow K_M ((b,d))} \arrow[r]             & {H_{s}^\downarrow D_\dgr^\Ffunc ((b,d))} \arrow[r]   & {H_{s}^\downarrow \ZB_\dgr^\Ffunc ((b,d))} \arrow[d, "\cong" {anchor=south, rotate=270}] \\
\cdots           & {H_{s-1}^\downarrow \ZB_\dgr^\Ffunc ((b,d))} \arrow[l] & {H_{s-1}^\downarrow D_\dgr^\Ffunc ((b,d))} \arrow[l] & {H_{s}^\downarrow K_M ((b,d))} \arrow[l]                    
\end{tikzcd}
    \end{center}

Since both $H_{s}^\downarrow D_\dgr^\Ffunc ((b,d))$ and $H_{s}^\downarrow D_\dgr^\Ffunc ((b,d))$ are zero for $b<d$, we obtain that the vertical arrow in the above diagram has to be an isomorphism.
    \end{proof}

\end{document}